\definecolor{violet}{rgb}{0.0,0.2,0.7}
\definecolor{rouge2}{rgb}{0.8,0.0,0.2}
\newcommand{\wh}{\widehat}
\newcommand{\Imm}{\mathrm{Im} \,}
\renewcommand{\d}{\partial}
\newcommand{\ddbar}{\partial\bar{\partial}}
\newcommand{\cX}{\mathcal{X}}
\newcommand{\cC}{\mathcal{C}}
\newcommand{\cG}{\mathcal{G}}
\renewcommand{\O}{\mathcal{O}}
\newcommand{\ep}{\varepsilon}
\renewcommand{\epsilon}{\varepsilon}
\renewcommand{\ker}{\mathrm{Ker} \,}
\newcommand{\cF}{\mathcal{F}}
\newcommand{\ol}{\overline}
\renewcommand{\ge}{\geqslant}
\renewcommand{\le}{\leqslant}
\renewcommand{\leq}{\leqslant}
\renewcommand{\geq}{\geqslant}
\newcommand{\om}{\omega}
\newcommand{\ddc}{dd^c}
\newcommand{\uu}{\mathbf{u}}
\newcommand{\vv}{\mathbf{v}}
\newcommand{\dbar}{\bar \partial}
\newtheorem{thm}{Theorem}[section]
\newtheorem{lemme}[thm]{Lemma}
\newtheorem{proposition}[thm]{Proposition}
\newtheorem{claim}[thm]{Claim}
\newtheorem{question}[thm]{Question}
\newtheorem{defn}[thm]{Definition}
\newtheorem{cor}[thm]{Corollary}
\theoremstyle{remark}
\newtheorem{setup}[thm]{Set-up}
\newtheorem{notation}[thm]{Notation}
\newtheorem{remark}[thm]{Remark}
\theoremstyle{plain}
\newtheorem{bigthm}{Theorem}
 \numberwithin{equation}{section}
\title[Curvature formula in a singular setting]{Curvature formula for direct images of twisted relative canonical bundles endowed with a singular metric}
\author{Junyan Cao}
\address{Laboratoire de Mathématiques J.A. Dieudonné, UMR 7351 CNRS, Université Côte d'Azur, Parc Valrose, 06108 Nice Cedex 02, France \qquad \qquad \qquad \qquad \qquad \qquad \qquad \qquad \qquad}
\email{junyan.cao@unice.fr}
\author{Henri Guenancia}
\address{Institut de Mathématiques de Toulouse; UMR 5219, Université de Toulouse; CNRS, UPS, 118 route de Narbonne, F-31062 Toulouse Cedex 9, France}
\email{henri.guenancia@math.cnrs.fr}
\author{Mihai P\u{a}un}
\address{Institut für Mathematik, Universität Bayreuth, 95440 Bayreuth, Germany}
\email{mihai.paun@uni-bayreuth.de}
\begin{document}
\begin{abstract}
In this note, we obtain various formulas for the curvature of the $L^2$ metric on the direct image of the relative canonical bundle twisted by a holomorphic line bundle endowed with a positively curved metric with analytic singularities, generalizing some of Berndtsson's seminal results in the smooth case.  When the twist is assumed to be relatively big, we further provide a very explicit lower bound for the curvature of the $L^2$ metric. 
\end{abstract}
\maketitle

\tableofcontents
%{\color{violet} To do / suggestions / questions etc. 
%\begin{enumerate}[label=$\bullet$]
%\item H: will add abstract: Great
%\item In the Hodge theory section, I believe one should replace $X_0$ by $X^\circ$ in order to avoid conflicting notations when considering families. 
%\item Ideas for a better title? (resp. short title for the headers):  
%\end{enumerate}}

\section{Introduction}

Let $p:\cX\to D$ be a smooth, proper fibration from a $(n+1)$-dimensional Kähler manifold $\cX$ onto the unit disk $D\subset \mathbb C$, and let $(L,h_L)$ be a holomorphic line bundle endowed with a possibly singular hermitian metric $h_L$ assumed to be positively curved (i.e. when $i\Theta_{h_L} (L) \geq 0$ in the sense of currents). Then, the positivity properties of the direct image sheaves 
$$\cF:= p_\star\left((K_{\cX/D} +L)\otimes \mathcal I(h_L)\right)$$
endowed with the $L^2$ metric $h_\cF$ are well-known, cf e.g. \cite{Bo09, PT, Paun18, DNWZ20} among many others. Moreover, when $h_L$ is {\it smooth}, we have at hand explicit formulas obtained by Berndtsson \cite{Bo09, Bo11} that compute the curvature of the $L^2$ metric on the direct image sheaf above. 

\medskip 

\noindent In this article we are aiming at the generalisation of Berndtsson's curvature formulas in case where the metric $h_L$ has relatively simple singularities, e.g. \textit{analytic singularities}. This is partly motivated by the need to have an interpretation of the "flat directions" in the curvature of $(\cF, h_\cF)$ in this context. Our main result in this direction states as follows.

\begin{bigthm}
\label{thmA}
Let $p:\cX\to D$ and $(L,h_L)\to \cX$ as above, and let $u\in H^0(D,\cF)$. We assume that 
\begin{enumerate}[label=$\bullet$]
\item The metric $h_L$ has analytic singularities and $i\Theta_{h_L}(L)\ge 0$ in the sense of currents. 
\item The section $u$ is flat with respect to $h_\cF$. 
\end{enumerate} 
%\item $h_L$ has generalized analytic singularities along an SNC divisor $E$, transverse to the fibers of $p$, cf. conditions \ref{A1}-\ref{A2} in Set-up~\ref{altsetup}.
%\item The dimension $\dim \ker \Delta_t ^{''}$ is independent of $t$, where $\Delta''_t$ is the Laplace operator $\Delta''_t$ on $L^2$-integrable $(n,1)$-forms with values in $L$ on $X_t\setminus E$, taken with respect to $h_L$ and a metric $\om_E$ with Poincaré singularities along $E$.
%\end{enumerate} 
Set $E :=\{h_L =\infty\}$. Then, there exists a continuous $L^2$-integrable representative $\uu$ of $u$ defined on the restriction $\cX^\star \setminus E$ of the family $p$ to some punctured disk $D^\star$ such that  
 \[ \frac{\dbar \uu}{dt} \Big|_{X_t\setminus E} =0 \]
for any $t\in D^\star$ and 
\begin{equation}
D'\uu= 0,  \qquad  \quad \Theta_{h_L} (L) \wedge \uu= 0 
\end{equation}
on $\cX^\star \setminus E$. 
Here $\cX^\star:=p^{-1}(D^\star)$ and $\uu$ is $L^2$ with respect to $h_L$ and a Poincaré type metric cf. Section \ref{sec2}.
\end{bigthm}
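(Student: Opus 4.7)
The plan is to combine the flatness hypothesis with a singular version of Berndtsson's curvature formula for $(\cF, h_\cF)$, which I expect to be established in an earlier section of the paper. In the smooth setting, such a formula expresses the curvature pairing $\langle i\Theta_{h_\cF}(\cF) u, u\rangle(t)$ as a sum of nonnegative quantities, each of which vanishes under flatness and each of which corresponds to one of the three geometric conclusions of the theorem.

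The first step is to select the representative $\uu$. For each $t \in D^\star$ one takes $\uu(t)$ to be the canonical holomorphic representative of $u(t)$ in $H^0(X_t, (K_{X_t}+L|_{X_t})\otimes \mathcal{I}(h_L|_{X_t}))$. To obtain continuity in $t$ on $\cX^\star \setminus E$ and $L^2$-integrability with respect to $h_L$ and the Poincaré type metric, one approximates $h_L$ by a decreasing sequence of smooth, positively curved metrics $h_{L,\ep}$, takes the smooth representatives $\uu_\ep$ produced by classical Berndtsson theory, and passes to the limit. The second step is to apply the singular curvature formula; one expects a decomposition of the form
\begin{equation*}
\Big\langle i\Theta_{h_\cF}(\cF)\tfrac{\d}{\d t}\wedge\tfrac{\d}{\d \bar t}\cdot u, u\Big\rangle(t) \;=\; T_1(t) + T_2(t) + T_3(t),
\end{equation*}
in which $T_1 = c_n \int_{X_t \setminus E} i\Theta_{h_L}(L)\wedge \uu\wedge \overline{\uu}\, e^{-\phi_L}$, $T_2$ is the $L^2$-norm of $\bar\partial\uu/dt$ along the fibers, and $T_3$ is the $L^2$-norm of the horizontal derivative $D'\uu$. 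Each term is nonnegative ($T_1$ by the positivity assumption on $h_L$), so flatness $\langle i\Theta_{h_\cF}(\cF) u, u\rangle \equiv 0$ on $D^\star$ forces $T_1\equiv T_2\equiv T_3\equiv 0$. The vanishing of $T_2$ and $T_3$ directly yields $\bar\partial\uu/dt = 0$ on $X_t\setminus E$ and $D'\uu = 0$ on $\cX^\star \setminus E$, while $T_1=0$ combined with the pointwise nonnegativity of the integrand $i\Theta_{h_L}(L)\wedge \uu\wedge \overline{\uu}\, e^{-\phi_L}$ forces this $(n+1,n+1)$-form to vanish everywhere, hence $\Theta_{h_L}(L)\wedge \uu = 0$.

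The main obstacle lies in justifying the curvature decomposition with the correct signs in the singular setting. Berndtsson's derivation in the smooth case relies on integration by parts on the total space, producing boundary terms that are not a priori controlled near the singular locus $E$. The strategy, following the pattern set up earlier in the paper, is to regularize $h_L$, to work on an exhaustion of $\cX^\star\setminus E$ by relatively compact open sets, to introduce cutoff functions adapted to the Poincaré type metric on $\cX^\star\setminus E$, and to pass to the limit with monotone or dominated convergence. A secondary technical point is upgrading $\uu$ from a fiberwise $L^2$ object to a genuinely continuous form on $\cX^\star\setminus E$; this should follow from the coupled overdetermined system $\bar\partial\uu/dt=0$, $D'\uu=0$ together with fiberwise holomorphicity, via standard elliptic regularity away from $E$.
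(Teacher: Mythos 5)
The central problem with your proposal lies in the claimed decomposition. You assert
$\langle i\Theta_{h_\cF}(\cF)u, u\rangle = T_1 + T_2 + T_3$
with all three terms nonnegative, where $T_3$ is the fiberwise $L^2$-norm of $D'\uu$. This is not what the curvature formula gives, even in Berndtsson's smooth setting. Writing $D'\uu = dt\wedge\mu$ and decomposing $\mu = P(\mu) + \mu^\perp$ into holomorphic and orthogonal parts, the Chern connection of $(\cF, h_\cF)$ satisfies $\nabla u = P(\mu)\,dt$ (Corollary~\ref{D1}), and when one converts the Hessian formula of Proposition~\ref{curvv} into a formula for $\langle\Theta_{h_\cF}(\cF)u,u\rangle$ by subtracting $\|\nabla u\|^2$, only the $P(\mu)$-piece cancels; what remains is
\[
\langle\Theta_{h_\cF}(\cF)u,u\rangle_t = c_n\int_{X_t}\Theta_{h_L}(L)\wedge\uu\wedge\bar\uu\,e^{-\phi_L} + \|\eta\|^2_{L^2(X_t)} - \|\mu^\perp\|^2_{L^2(X_t)},
\]
with a genuine minus sign in front of $\|\mu^\perp\|^2$ (this is visible explicitly in the analogous computation in Section~\ref{section 5}, cf.\ equation~\eqref{c6}). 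Flatness makes the left-hand side zero, but a vanishing sum of terms of mixed sign does not force each term to vanish. Your step ``all three are nonnegative, flatness forces $T_1\equiv T_2\equiv T_3\equiv 0$'' therefore does not go through.

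What the paper actually does is not approximate $h_L$ for this theorem at all, but rather exploit flatness \emph{before} invoking the curvature formula, in order to kill the offending negative term. Starting from the specific representative $\uu$ obtained by contraction with the $\om_E$-horizontal lift of $\partial/\partial t$, flatness $\nabla u = 0$ translates (via Corollary~\ref{D1}) into the statement that $\mu|_{X_t^\circ}$ is orthogonal to $\ker\dbar$, hence $\dbar^{\star_t}$-exact by the $L^2$-Hodge decomposition of Theorem~\ref{Hodge}. One then solves $\dbar^{\star_t}\beta_t = \mu|_{X_t^\circ}$ fiberwise with the minimal solution, proves the family $t\mapsto\beta_t$ is continuous (Proposition~\ref{conlemme}, which itself needs the closed-range property of $\dbar^*$ on $(n,1)$-forms and the uniform Poincar\'e-type estimate of Corollary~\ref{unifconst}), and replaces $\uu$ by $\uu_1 := \uu - dt\wedge(\star_t\beta_t)$. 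By construction $D'\uu_1 = 0$, so the term $\|\mu^\perp\|^2$ vanishes identically, and only then does the curvature formula reduce to a sum of two nonnegative contributions (the $\Theta_{h_L}(L)$-term and the $\dbar\uu_1$-term, the latter nonnegative because $\uu_1$ retains the primitivity property $\frac{\dbar\uu_1}{dt}|_{X_t}\wedge\om_E = 0$). The vanishing of their sum then gives $\Theta_{h_L}(L)\wedge\uu_1 = 0$ and $\dbar\uu_1\wedge\overline{\dbar\uu_1} = 0$, completing the argument. Your proposal omits the corrective step and the Hodge-theoretic input entirely, which is precisely the content that Sections~\ref{hodge} and~\ref{sec3} of the paper are built to provide. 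Also, what you call the ``canonical holomorphic representative of $u(t)$'' is not a well-defined object as a form on the total space; the paper's choice is the horizontal-lift representative, whose precise $L^2$-integrability properties (Lemma~\ref{meta}) are needed to make the curvature formula applicable in the first place.
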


\noindent By "punctured disk" in the previous statement we mean that $D\setminus D^\star$ is a discrete set, possibly empty. By $L^2$, we mean locally $L^2$ with respect to the base $D^\star$. 

\medskip

\noindent The result we are next mentioning concerns the case of a twisting line bundle $L$ which is $p$-big. It is then expected that the strict positivity of $(L, h_L)$ is inducing stronger positivity properties of the curvature of
the direct image than in the general case of a semi-positively curved $L$. 
This is confirmed by the following statement, which is a version of \cite[Thm 1.2]{Bo11}. 
 
 \begin{bigthm}
\label{thmB}
Let $p:\cX\to D$ be a smooth projective fibration and let $(L,h_L)\to \cX$ be a line bundle such that \begin{enumerate}[label=$\bullet$]
\item $h_L$ has analytic singularities and $i\Theta_{h_L}(L)\ge 0$ in the sense of currents.
\item For any $t\in D$, the absolutely continuous part $\om_L:=(i\Theta_{h_L}(L))_{\rm ac}$ satisfies $\int_{X_t} \om_L^n>0$. 
\end{enumerate}
Then there exists a punctured disk $D^\star\subset D$ such that 
for any $u \in H^0 (D, \cF)$ we have the following inequality
\begin{equation}\label{c1 intro}
\langle\Theta_{h_\cF}(\cF)u, u\rangle_t\geq c_n\int_{X_t} c(\om_L)u\wedge \ol u e^{-\phi_L}
\end{equation}
for any $t\in D^\star $. 
\end{bigthm}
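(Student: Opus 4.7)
The strategy is to reduce the inequality to Berndtsson's smooth curvature lower bound \cite[Thm 1.2]{Bo11} via a regularization of $h_L$, the analytic-singularities hypothesis being what makes both the approximation tractable and the passage to the limit possible.

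\medskip

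First I would regularize. Exploiting the analytic structure of the singular locus $E=\{h_L=\infty\}$, Demailly-type approximation (performed on a log-resolution where $h_L$ has divisorial singularities) produces a decreasing sequence of smooth metrics $(h_{L,\ep})_\ep$ on $L$ with $i\Theta_{h_{L,\ep}}(L)\ge -\ep\,\om_\cX$ for a fixed Kähler form $\om_\cX$ on $\cX$, converging to $h_L$ pointwise on $\cX\setminus E$ and with $(i\Theta_{h_{L,\ep}}(L))_{\rm ac}\to \om_L$ a.e.\ on $\cX\setminus E$. The function $t\mapsto \int_{X_t}\om_L^n$ is upper semicontinuous and strictly positive by hypothesis, so after removing the (at most finitely many) $t\in D$ where this integral vanishes I obtain a punctured disk $D^\star$ on which relative bigness holds uniformly.

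\medskip

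For each $\ep>0$, I would twist $h_{L,\ep}$ by $e^{-\ep\chi}$ for a quasi-psh $\chi$ on $\cX$ chosen to restore semi-positivity, and apply the smooth curvature inequality of \cite{Bo11} to the resulting direct image $\cF_\ep$, producing an estimate
\begin{equation*}
\bigl\langle \Theta_{h_{\cF_\ep}}(\cF_\ep)u,u\bigr\rangle_t \,\geq\, c_n \int_{X_t} c\bigl((i\Theta_{h_{L,\ep}}(L))_{\rm ac}\bigr)\, u\wedge \ol u\,e^{-\phi_{L,\ep}} \,-\, O(\ep).
\end{equation*}
The essential input is that under relative bigness the Lefschetz-type operator $v\mapsto i\om_L\wedge v$ on primitive $(n-1,0)$-forms admits an inverse with uniformly bounded operator norm, which is what allows Berndtsson to absorb the negative second-fundamental-form term in his curvature identity into the positive main term. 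Taking $\ep\to 0$, the left-hand side converges to $\langle\Theta_{h_\cF}(\cF)u,u\rangle_t$ by Berndtsson--P\u{a}un-type stability of $L^2$ direct images under decreasing smooth regularization of $h_L$; the right-hand side is handled on the log-resolution $\mu:\wX\to\cX$ by Fatou applied on $\wX\setminus \Exc(\mu)$, using a uniform $L^1$ majorant for the integrand that follows from $u\in H^0(D,\cF)$.

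\medskip

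The main obstacle is to control Step 2 uniformly in $\ep$ as one approaches $E$. Berndtsson's scheme relies on fiberwise $\dbar$-estimates whose constants depend on the geometry of $(X_t,\om_L)$ and on the spectral gap of the associated Laplacian, and in principle this spectral gap can collapse when $\om_L$ degenerates along $E$. The remedy, as in the earlier parts of the paper, is to work on $X_t\setminus E$ with a Poincaré-type Kähler metric (cf.\ Section \ref{sec2}) providing completeness and a quasi-isometric model near $E$; this is precisely what is needed to keep the $L^2$-inverse of the Lefschetz operator bounded independently of $\ep$, and therefore to justify the limiting procedure in Step 2.
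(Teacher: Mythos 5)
There is a genuine gap, and it is structural rather than cosmetic. Your plan regularizes $h_L$ by a \emph{smooth} Demailly-type approximation $(h_{L,\ep})$ and then hopes to invoke Berndtsson's compact, smooth curvature inequality \cite[Thm 1.2]{Bo11} as a black box for each $\ep$. This collides with the geometry of the problem in two ways. First, the direct image bundle changes: since $\mathcal I(h_{L,\ep})$ is trivial, $\cF_\ep := p_\star(K_{\cX/D}+L)$ is a strictly larger sheaf than $\cF = p_\star\bigl((K_{\cX/D}+L)\otimes \mathcal I(h_L)\bigr)$, and the convergence $\Theta_{h_{\cF_\ep}}(\cF_\ep) \to \Theta_{h_\cF}(\cF)$ on the subbundle $\cF$ is a $C^2$ statement about norms, not a pointwise one — you neither prove it nor cite an applicable result, and with Demailly regularization (a Bergman-kernel construction) there is no obvious uniform smoothness in the base variable $t$. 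Second, and more seriously, the uniformity of the ``absorption'' constant in Berndtsson's proof cannot be obtained while simultaneously black-boxing his theorem: the bound that lets one discard the second-fundamental-form term is the fiberwise $L^2$ estimate $\|\mu^\perp\|\le\|\eta\|$ for the minimal solution of $\dbar\mu = D'\eta$ (not, as you write, an operator norm bound for the Lefschetz map), and to keep that estimate from collapsing near $E$ one has to redo the analysis on the complete manifolds $X_t\setminus E$ equipped with a Poincar\'e metric — which is precisely the content of Sections~\ref{sec2}--\ref{sec3} and \cite{JCMP} that you are trying to bypass. In short, once you admit (as your last paragraph does) that a Poincar\'e-type replacement for the compact estimates is unavoidable, you have conceded that Berndtsson's smooth theorem cannot be used as a black box and the singular curvature formula (Proposition~\ref{curvv}) must be reproved.

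The paper's actual route avoids smoothing altogether, and the difference is essential. On a log resolution it keeps $\phi_L$ singular and perturbs it \emph{towards more singularity}, setting $\phi_\ep := \phi_L - \ep\sum\log\log(1/|s_i|^2)$, so that $dd^c\phi_\ep$ is a complete K\"ahler metric with Poincar\'e singularities along $E$ and falls within Set-up~\ref{altsetup}. One can then apply the singular curvature formula (Proposition~\ref{curvv}) to the representative built from the horizontal lift with respect to $\om_\ep$, use the precise $\dbar$-estimate from \cite[Thm~1.6]{JCMP} on these complete fibers for the absorption, and control the limit $\ep\to 0$ by the uniform upper bound on $c(\phi_\ep)$ (Lemma~\ref{app}(i)), the pointwise convergence (ii), and the stability of multiplier ideals and $L^2$ metrics (iii). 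The reduction from Theorem~\ref{thmB} to the ``snc plus relatively K\"ahler'' case (Theorem~\ref{strikt}) uses Fujita-type decomposition $[\om_L]=A+F$ and the interpolated weights $\phi_\delta=(1-\delta)\phi_L+\delta(\phi_A+\phi_F+\phi_E)$ — again singular approximants, with multiplier ideals and $L^2$ metrics tracked via Claim~\ref{claim}. Your proposal omits all of these concrete steps (in particular you never exhibit a uniform $L^1$ majorant making Fatou usable), and the substitution of Demailly smoothing for Poincar\'e perturbation is exactly what makes the limit argument break down.
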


\noindent In the statement above we identify $\Theta_{h_\cF}(\cF)$ with an endomorphism of $\cF$ 
by "dividing" with $idt\wedge d\ol t$. Moreover, $c_n=(-1)^{\frac {n^2}2}$ is the usual unimodular constant. We denote by $c (\om_L) := \frac{\om_L^{n+1}}{\om_L^{n} \wedge  idt \wedge d \overline{t}}$ the geodesic curvature associated to $\om_L$, cf. Definition~\ref{geod curv} for a precise definition in the degenerate case.

Actually we can provide some details about the punctured disk $D^\star$ in Theorem~\ref{thmB}.
Under the hypothesis of this result, it turns out that the $L^2$ metric $h_\cF$
is smooth in a complement of a discrete subset of $D$.
We will show that the formula \eqref{c1 intro} is valid for points $t\in D$ in the neighborhood of which the metric $h_\cF$ is smooth, and such that $\displaystyle \cF_t= H^0\left(X_t, (K_{X_t}+ L)\otimes \mathcal I(h_L|_{X_t})\right)$, cf Remark~\ref{rem Dstar}.
\medskip

$\bullet$ {\bf Strategy of the proof}

\noindent
 Roughly speaking, the idea of the proof of Theorem \ref{thmA} and \ref{thmB} respectively is as follows: we endow the complement $\cX\setminus E$ with a complete metric of Poincar\'e type and proceed by taking advantage of what is known in the compact case, combined with the existence of families of cut-off functions specific to the complete setting. There are however quite a few difficulties along the way. Probably the most severe stems from the Hodge decomposition in the complete case: the image of the usual operators $\dbar$ and $\dbar^\star$ may not be closed. We show in Section~\ref{hodge} that at least in bi-degree $(n,1)$ this is the case, cf.~Theorem~\ref{Hodge},
as consequence of the fact that the background metric has Poincar\'e singularities.

\noindent In order to construct the form $\uu$ in Theorem~\ref{thmA}, we start with a representative of $u$ given by the contraction with the lifting $V$ of $\displaystyle \frac{\partial}{\partial t}$ with respect to a Poincar\'e metric $\om_E$. It turns out that this specific representative has all the desired properties needed to fit into the $L^2$-theory.
Then we "correct" it: this is possible by the flatness hypothesis, and it boils down to solving a fiberwise $\dbar^\star$-equation. It is both in the resolution of this equation as well as in the study of the regularity of the resulting solution that Theorem~\ref{Hodge} is used. 
Another important ingredient of the proof is Proposition \ref{curvv}, which gives a general curvature formula for $(\cF,h_\cF)$ when $h_L$ has e.g. analytic singularities. It provides a rather wide generalisation of a result due to Berndtsson. 

\noindent As for Theorem~\ref{thmB}, the starting point is the fact that the positivity properties of $(L,h_L)$ allow us to construct a family of Poincaré metric $(\om_\ep)_{\ep >0}$ on $\cX\setminus E$. Then, the representatives $\uu_\ep$ of $u$ \-- obtained as above as the contraction with the lifting $V_\ep$ of $\displaystyle \frac{\partial}{\partial t}$ with respect to $\om_\ep$ \-- enjoy a special property that allows us to extract the desired  inequality from the general curvature formula from Proposition~\ref{curvv} and a limiting argument when $\ep$ approaches zero. Although the use of this special representative goes back to Berndtsson, several new analytic inputs are required to deal with the present singular situation. \\

$\bullet$ {\bf Organization of the paper}

\begin{enumerate}[label=$\circ$]
\item In Section~\ref{sec1}, we introduce our set-up, notation and main objects of study (the $L^2$ metric on $\cF$, the geodesic curvature).
\item In Section~\ref{sec2}, we review two aspects of Poincaré metrics: first, the integrability properties of representatives $\uu$ of sections $u$ of $\cF$ constructed via such metrics (Lemma~\ref{meta}) and then, we investigate the closedness of the image of the operators $\dbar, \dbar^*$ on a hermitian line bundle with analytic singularities (Theorem~\ref{Hodge}). 
\item In Section~\ref{sec3}, we establish a general curvature formula (Proposition~\ref{curvv}). This allows us to find very special representatives of flat sections of $\cF$ (Theorem~\ref{mainresI}), leading to the proof of Theorem~\ref{thmA}. 
\item In Section~\ref{section 5}, we analyze the relatively big case in the "snc situation" (Theorem~\ref{strikt}), from which we then deduce
Theorem~\ref{thmB} .
\end{enumerate}

$\bullet$ {\bf Acknowledgements}
H.G. has benefited from State aid managed by the ANR under the "PIA" program bearing the reference ANR-11-LABX-0040,
 in connection with the research project HERMETIC.
J.C. thanks the excellent working conditions provided by the IHES during the main part of the preparation of the article. M.P. gratefully acknowledge the support of DFG. 

\noindent \emph{It is our privilege to dedicate this article to our friend and colleague Ahmed~Zeriahi,  with our admiration for his outstanding mathematical achievements and wishing him a very happy and active retirement}!

\section{Set-up and notation}
\label{sec1}

\noindent The set of assumptions we need for our results to hold is the following.

%%%%%%%%%%%%%%%%%%%%%%%%%%%%%%%%%%%%%%%%%%%%%%%%%%%%%%%%%%%%%%%%%%%%%%%%%%%%%%%%%%%%%%%%%%%%%%%%%%%%%%%%%%%%%%%%%%%%%%%%%%%%%%%%%%%%%%%%%%%%%%%%%%%%%%%%%%%%%%

\begin{setup}
\label{altsetup}
Let $p:\cX\to D$ be a smooth, proper fibration from a $(n+1)$-dimensional K\"ahler manifold $\cX$ onto the unit disk $D\subset \mathbb C$, and let $(L,h_L)$ be a holomorphic line bundle endowed with a possibly singular hermitian metric $h_L$.
\smallskip

\noindent We assume that there exists a divisor $E= E_1+\dots + E_N$  whose support is contained in the total space $\cX$ of $p$.
such that the following requirements are fulfilled.
\begin{enumerate}[label={\color{violet}(A.\arabic*)}]
\item \label{A1} For every $t\in D$ the divisor $E+X_t$ has simple normal crossings. Let $\Omega\subset \cX$ be a coordinate subset on $\cX$. We take $(z_1,\dots. z_n, t= z_{n+1})$ a coordinate system on $\Omega$ such that the last one $z_{n+1}$ corresponds to the map $p$ itself and such that $z_1\dots z_p= 0$ is the local equation of $E\cap \Omega$.

\item \label{A2} The metric $h_L$ has {\it generalised} analytic singularities along $E$; i.e. its local weights $\varphi_L$ on $\Omega$ can be written as
\begin{equation*}
\varphi_L\equiv \sum_{i=1}^p a_i\log|z_i|^2- \sum_{I} b_I\log\left(\phi_I(z)-\log \big(\prod_{i\in I}|z_i|^{2k_i}\big)\right) 
\end{equation*}
modulo $\mathcal C^\infty$ functions, where $a_i, b_I$ are positive real numbers, $k_i$ are positive integers and $(\phi_I)_I$ are smooth functions on $\Omega$. The set of indexes in the second sum coincides with the non-empty subsets of $\{1,\dots, p\}$.

\item \label{A3} The Chern curvature of $(L,h_L)$ satisfies
\[i\Theta_{h_L}(L)\ge 0\]
in the sense of currents on $\cX$. 
\end{enumerate}
\medskip  

\noindent We then set
\[\mathcal F:=p_*((K_{\cX/D}+L)\otimes \mathcal I(h_L))\]
and assume that that this vector bundle on $D$ has positive rank.
As a consequence of the previous requirements \ref{A1}-\ref{A3}, we have the following statement.

\begin{lemme}\label{prelemme}
Under the assumptions  \ref{A1}-\ref{A3}, we have
\begin{equation*}
\cF_t= H^0\left(X_t, (K_{X_t}+ L)\otimes \mathcal I(h_L|_{X_t})\right)
\end{equation*}
for every $t\in D$. Moreover, the canonical $L^2$ metric (cf. Notation \ref{notation}) on $\cF$ is non-singular.   
\end{lemme}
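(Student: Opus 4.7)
The strategy is to work in local coordinates afforded by the snc hypothesis \ref{A1} and to exploit the explicit form of the singularities in \ref{A2}. Both assertions ultimately rest on the fact that, in this setting, the multiplier ideal $\mathcal{I}(h_L)$ is compatible with restriction to fibers in a strong sense.

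For the identification of fibers, fix $t_0\in D$ and consider the restriction map
\[
r:\cF_{t_0}:=\cF\otimes_{\mathcal{O}_D}\mathbb{C}(t_0) \longrightarrow H^0\bigl(X_{t_0},(K_{X_{t_0}}+L)\otimes \mathcal{I}(h_L|_{X_{t_0}})\bigr).
\]
On a chart $\Omega$ as in \ref{A1}, the weight $\vp_L$ decomposes into log poles along $E\cap\Omega$, log-log correction terms which contribute only locally bounded weights (up to integrable remainders), and a smooth part. It follows that $\mathcal{I}(h_L)_\Omega$ is essentially determined by the coefficients $a_i$ and behaves like a monomial ideal in $z_1,\ldots,z_p$. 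Invoking the snc hypothesis, the restriction to $X_{t_0}\cap\Omega$ gives the equality $\mathcal{I}(h_L)_\Omega|_{X_{t_0}}= \mathcal{I}(h_L|_{X_{t_0}})$ locally, so $r$ is well-defined. Injectivity is immediate since any germ of section vanishing along $X_{t_0}$ is divisible by $t-t_0$. Surjectivity follows from the Ohsawa-Takegoshi $L^2$ extension theorem: any $\sigma \in H^0(X_{t_0},(K_{X_{t_0}}+L)\otimes \mathcal{I}(h_L|_{X_{t_0}}))$ admits a holomorphic $L^2$ extension to $(K_{\cX/D}+L)\otimes \mathcal{I}(h_L)$ on a neighborhood of $X_{t_0}$.

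For the smoothness of $h_\cF$, let $u$ be a local section of $\cF$ near $t_0$. Via a partition of unity subordinate to charts $\Omega$ as above, the norm
\[
\|u\|_t^2 = c_n \int_{X_t} u\wedge \ol u\, e^{-\vp_L}
\]
reduces to a finite sum of integrals of the form $\int_{X_t\cap\Omega} |f(z,t)|^2 e^{-\vp_L(z,t)}\, dV(z)$. By \ref{A1}, the integrand admits, for $t$ close to $t_0$, a fixed integrable majorant on $X_{t_0}\cap\Omega$; the same applies to its $t$-derivatives, whose structure is inherited from the explicit form in \ref{A2}. Dominated convergence together with differentiation under the integral sign then yield finiteness, continuity, and smoothness of $h_\cF$ near $t_0$. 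Non-degeneracy follows from the fact that the $L^2$ pairing is definite on each fiber, already identified above.

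The chief technical point is ensuring that the monomial structure of $\mathcal{I}(h_L)$ is preserved under restriction to the fibers $X_t$; this is precisely what the transversality built into \ref{A1} provides, so that every component of $E$ either meets each $X_t$ transversally or is locally disjoint from it, and no fiberwise enhancement of the multiplier ideal occurs. Once this compatibility is established, the remainder is a routine application of Ohsawa-Takegoshi extension and dominated convergence.
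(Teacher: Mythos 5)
Your proposal is correct and follows essentially the same route as the paper: both arguments rest on the compatibility of $\mathcal I(h_L)$ with restriction to the fibers (afforded by the transversality in \ref{A1} and the explicit form of the weight in \ref{A2}), use Ohsawa--Takegoshi extension for surjectivity of the restriction map (where \ref{A3} enters), and establish smoothness of $h_\cF$ by reducing, via a partition of unity, to integrals whose $t$-derivatives are controlled by the boundedness of the $t,\bar t$-derivatives of $\varphi_L$. The only point you leave implicit, which the paper records at the outset, is that $\cF$ is locally free because it is a torsion-free sheaf on the disk $D$; this is needed to make sense of the $L^2$ metric as a genuine hermitian metric on a vector bundle.
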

\begin{proof}
We first remark that $\cF$ is indeed locally free 
given that it is torsion-free and $D\subset \mathbb C$ is a disk.  

The fibers of $\cF$ are indeed identified with $H^0\left(X_t, (K_{X_t}+ L)\otimes \mathcal I(h_L|_{X_t})\right)$ because of the transversality hypothesis \ref{A1}, combined with the type of singularities we are allowing for $h_L$ in \ref{A2}. The point is that a holomorphic function $f$
defined on the coordinate subset $\Omega$ belongs to $\mathcal I(h_L)$ exactly when the restriction $\displaystyle f|_{\Omega\cap X_t}$ belongs to the ideal
$\mathcal I(h_L|_{X_t})$. On the other hand the Kähler version of Ohsawa-Takegoshi theorem \cite{CaoOT} implies that any element of $H^0\left(X_t, (K_{X_t}+ L)\otimes \mathcal I(h_L|_{X_t})\right)$ extends to $\cX$
--it is at this point that the hypothesis \ref{A3} plays a crucial role.

 Concerning the smoothness of the $L^2$-metric on $\cF$, we can use partitions of unity to reduce to checking that integrals of the form $\int_{\Omega\cap X_t} |f_t|^2e^{-\varphi_L}$ vary smoothly with $t$, where $f_t=f|_{X_t}$ for some $f\in \mathcal I(h_L)|_{\Omega}$ and $\varphi_L$ is given by the expression in  \ref{A2}. Now it is clear there that all derivatives in the $t,\bar t$ variables of $\varphi_L$ are bounded, so that the result follows from general smoothness results for integrals depending on a parameter. 
\end{proof}  
\end{setup}

\medskip

\noindent $\bullet $ {\it A few comments about the conditions \ref{A1}-\ref{A2}.}

\noindent The point we want to make here is that the transversality requirements in  \ref{A1}-\ref{A2} can be obtained starting from a quite general context.

We consider $p:\cX\to D$ a proper fibration from a $(n+1)$-dimensional Kähler manifold $\cX$ onto the unit disk $D\subset \mathbb C$, and let $(L,h_L)$ be a holomorphic line bundle endowed with a possibly singular hermitian metric $h_L$.
We assume that \ref{A3} holds true, and that the singularities of $h_L$ are of the form
\begin{equation}\label{s2}
\varphi_L\equiv \sum_{i=1}^p a_i\log|f_i|^2- \sum_{i=1}^p b_i\log\left(\tau_i- \log |g_i|^2 \right)
\end{equation}
modulo $\mathcal C^\infty$ functions, where $a_i, b_i$ are positive real numbers, $f_i, g_i$ are holomorphic, and $\tau_i$ are smooth.

If  \ref{A1}-\ref{A2} are not satisfied for $p:\cX\to D$, then one can consider a log resolution $\pi:\cX'\to \cX$ of $(\cX, \mathscr I_Z)$ where $Z$ is the singular set of $h_L$. Set $p':=p\circ \pi:\cX'\to D$, $(L',h_{L'}):=(\pi^*L, \pi^*h_L)$, $E$ is the reduced divisor induced by $\pi^{-1}(Z)$. It is immediate that 
$$\pi_*((K_{\cX'/D}+L')\otimes \mathcal I(h_{L'}))=(K_{\cX/D}+L)\otimes \mathcal I(h_L)$$ 
so that, in particular, $\pi_*((K_{\cX'/D}+L')\otimes \mathcal I(h_{L'}))=\mathcal F$. 

The map $p'$ may not be smooth anymore (e.g. some components of $E$ may be irreducible components of fibers of $p'$). Set $D_{\rm reg}$ to be the Zariski open set of regular values of $p'$, $D_1:=D_{\rm reg}\cap D$, $\cX_1:=p'^{-1}(D_1)$, $p_1:=p'|_{\cX_1}$, $(L_1, h_{L_1})$:=$(L',h_{L'})|_{\cX_1}$. Then, the triplet $(p_1,L_1,h_{L_1})$ satisfies the assumptions  \ref{A1}-\ref{A3}.
\medskip

\noindent In conclusion, starting with a map $p$ as above and singular metric $h_L$ as in \eqref{s2}, we can use our results on the family $p_1$ restricted to some punctured disk $D^\star\subset D$. 

\medskip

\noindent $\bullet $ {\it The geodesic curvature in a degenerate setting.}

\noindent Let $p:\cX\to D$ be a smooth, proper fibration where $\cX$ is a Kähler manifold of dimension $n+1$. Let $\omega$ be a closed positive $(1,1)$-current on $\cX$ such that $\om$ is smooth on a non-empty Zariski open subset $\cX^\circ\subset \cX$. Let $\om_\cX$ be a Kähler metric on $\cX$. 

\begin{defn}
\label{geod curv}
The geodesic curvature $c(\om)$ of $\om$ on $\cX^\circ$ is defined by
\[c(\om):=\lim_{\ep \to 0} c(\om+\ep \om_\cX) = \lim_{\ep \to 0} \frac{(\om+\ep\om_\cX)^{n+1}}{(\om+\ep\om_\cX)^{n}\wedge idt\wedge d\bar t}.\]
\end{defn}
A few explanations are in order. 

First, if $\om$ is relatively Kähler on $\cX$, one recovers the usual definition. 

Next, it is easy to observe that $c(\om+\ep\om_\cX)=1/\|dt\|^{2}_{\om+\ep\om_\cX}$. In particular, that non-negative quantity is non-increasing when $\ep$ decreases towards $0$, hence it admits a limit. By the same token, one can see that the limit is independent of the choice of the Kähler metric $\om_\cX$. 

Finally, if $t\in D$ is such that $X_t^\circ:=X_t\cap \cX^\circ$ is dense in $X_t$, then one defines $c(\om)$ on the whole $X_t$ by extending it by zero across $X_t\setminus X_t^\circ$. Note that if the absolutely continuous part of $\om$ satisfies $\om_{\rm ac} \le C \om_{\cX}$ on $X_t$ for some constant $C>0$, then $c(\om)$ is a bounded function on $X_t$ (this follows e.g. from the inequality $c(\om)\le \|\frac{\partial}{\partial t}\|^2_{\om}$ for a set of coordinates $(z_1, \ldots, z_n, z_{n+1}=t)$ such that $p(z)=t$). In particular, the integral $\int_{X_t} c(\om) \om_{\cX}^n$ is finite. 

\medskip

\begin{notation}
\label{notation}
In the Set-up~\ref{altsetup} above:

$\cdot$ We set $\cX^\circ:=\cX\setminus E$, $X_t^\circ:=X_t \cap \cX^\circ$,  $L_t:=L|_{X_t}$, $h_{L_t}:=h_{L}|_{X_t}$. 

$\cdot$ We use interchangeably $h_L$ and $e^{-\phi_L}$; when working in a trivializing chart of $L$, we will denote by $\varphi_L$ the local weight of $h_L$. The $(1,0)$-part of the Chern connection of $(L,h_L)$ over $\cX\setminus E$ is denoted by $D'$. 

$\cdot$ Under assumption \ref{A1}, we will write  $E :=\sum_{i=1}^N  E_i$ for the decomposition of $E$ into its (smooth) irreducible components.  Next, let $s_i$ be a section of $\mathcal O_\cX(E_i)$ that cuts out $E_i$, and let $h_i$ be a smooth hermitian metric on $\mathcal O_\cX(E_i)$. 
In the following, $|s_i|^2$ stands for $|s_i|^2_{h_i}$, and we assume that $|s_i|^2<e^{-1}$.

 $\cdot$ We will interchangeably denote by $\|\cdot \|$ or $h_\cF$ the $L^2$ metric on $\mathcal F$; i.e. if $u\in \mathcal F_t=H^0(X_t, (K_{X_t}+L_t)\otimes \mathcal I(h_{L_t}))$, then $\|u\|^2:=c_n \int_{X_t}u\wedge \bar u e^{-\phi_{L_t}}$ with $c_n=(-1)^{\frac{n^2}{2}}$. Lemma \ref{prelemme} ensures that the $L^2$ metric is {\it smooth} on $D$. We denote by $\nabla$ the $(1,0)$ part of the Chern connection of $(\mathcal F, \|\cdot \|)$ on $D$.
\end{notation}

%\medskip 
%The first part of this note is aimed at generalizing the curvature formulas in \cite[Section 4]{Bo09} to this singular setting.
%The second part of this note is devoted to discussing the case when $L$ is $p$-big and generalizing 
% \cite[Thm 1.2]{Bo11} to our setting.

\section{A few technicalities about  Poincar\'e type metrics} \label{sec2}

\noindent Throughout this section we adopt Set-up~\ref{altsetup}. 
Let $\omega$ be a fixed Kähler metric on $\cX$, and let 
$$\om_E  := \omega + \ddc \Big[ - \sum_{i=1}^N \log \log \frac{1}{|s_i|^2} \Big]  \qquad\text{on } \cX^\circ$$
be a metric with Poincar\'e singularities along $E$.
Thanks to \ref{A1} we infer that $\displaystyle \om_E |_{X_t^\circ}$ is a complete K\"ahler metric on $X^\circ_t$ with Poincar\'e singularities along $E\cap X_t$ for each $t\in D$.
\medskip

\noindent In the next subsections we will be concerned with the following two main themes.

Let $V$ be the horizontal lift of $\displaystyle \frac{\partial}{\partial t}$ with respect to the Poincar\'e-type metric $\om_E$. We estimate the size of its coefficients near the singularity divisor $E$, and then show that
the representatives of direct images constructed by using $V$ have the expected $L^2$ properties allowing us to use them in the computation of the curvature of $\cF$. This is the content of Subsection \ref{flit}.  

In Subsection \ref{hodge} we establish a few important properties of the $L^2$-Hodge decomposition for $(n,1)$-forms with values in $(L, h_L)$, where the background metric is $(X, \om_E)$. The main result here is that the image of $\dbar^\star$ is closed, cf Theorem~\ref{Hodge},  a very useful result \emph{per se} and for the next sections of this paper as well.

\subsection{Estimates of the background metric, lifting}\label{flit}

%Evidently, $\om$ and $\phi_L$ depends on $\ep$. But $\ep$ is fixed in section 2 and 3, and we will  let $\ep \rightarrow 0$ in section 4.

\bigskip

%\subsection{Estimates of the metric}

We choose local coordinates $(z_1,\dots, z_n, z_{n+1}= t)$ on $\cX$ such that $p(z,t)=t$ --as in \ref{A1}--
and in which $\om_E$ is locally given by 
$$\om_E = g_{t\bar t}\,idt \wedge d\bar t+ \sum_{\alpha} g_{\alpha \bar t }\, idz_{\alpha} \wedge d\bar t+ \sum_{\alpha} {g}_{t \bar \alpha} \, idt \wedge d \bar z_{\alpha}+\sum_{\alpha, \beta} g_{\alpha\bar \beta} \,idz_{\alpha} \wedge d\bar z_{\beta} $$
By the estimates in \cite[\S 4.2]{G12} the coefficients of $\om_E$ are can be written as follows
\begin{equation}
\label{coeff poin poin}
g_{j \bar k}=g_{j\bar k}^0+ \frac{\delta_{j}\cdot \delta_{jk}}{|z_{j}|^{2}\log^{2}|z_{j}|^{2}}+\frac{\delta_{j} A_{j}}{z_j \log^{2}|z_j|^{2}}+\frac{\delta_{k} B_k}{\bar z_k \log^{2}|z_k|^{2}}+ \sum^{p}_{\ell=1}\frac{C_{\ell}}{\log |z_{\ell}|^{2}} 
\end{equation}
where $A_j, B_k, C_{\ell},g_{j\bar k}^0$ are smooth functions on $\Omega$. We use the notation $\delta_{j}=\delta_{j\in \{1,\ldots, p\}}$ and $\delta_{ij}$ is the usual Kronecker symbol. 
\smallskip

\noindent In order to present the computations to follow in a reasonably simple way, we introduce for $i=1,\ldots, n$ the 
functions 
\begin{equation}\label{a1} f_i(z):=\begin{cases} w_i:= z_i \log{|z_i|^{2}} &\mbox{if} \quad i\in\{1, \ldots, p\}\\ 1 & \mbox{else}\end{cases}
\end{equation}
or $f_i(z)= \delta_iw_i+ 1-\delta_i$ more concisely. The coefficients of the metric $\om_E$ can be written as 
\begin{equation}\label{a2}
g_{\alpha \bar \beta}= \frac{1}{f_\alpha\bar f_\beta}\Psi_{\alpha\bar \beta}(z'', w, \rho).
\end{equation}
The notations we are using in \eqref{a2} are:
\begin{enumerate}

\item[(a)] $\displaystyle \Psi_{\alpha\bar \beta}$ is a smooth function defined in the neighborhood of $0\in \mathbb C^{n-p}\times  \mathbb C^{p}\times  \mathbb C^{p}$.

\item[(b)] $w:=(w_1, \ldots, w_p)$ (cf. \eqref{a1}) and $z'':= (z_{p+1},\dots , z_n)$.

\item[(c)]  For $i=1,\dots, p$ we introduce $\displaystyle \rho_i:= \frac{1}{\log|z_i|^2}$. 
\end{enumerate}
\smallskip

\noindent Given that $\omega_E$ is a metric, the functions $\Psi_{\alpha\bar\beta}$ are not arbitrary (since the matrix $(\Psi_{\alpha \bar \beta})_{\alpha\bar \beta}$ is definite positive at each point).
We simply want to emphasize in \eqref{a2} the general shape of the coefficients, which will be useful in the statement that follows.

\begin{lemme}\label{coef}
The following estimates hold true:
\begin{enumerate}

\item[(i)] $\displaystyle \det(g)=  \prod_{\alpha=1}^n |f_\alpha(z)|^{-2}\left(1+ \Psi(z'', w, \rho)\right)$,
where the {\rm "1"} inside the parentheses means a strictly positive constant, and $\Psi$ is smooth such that 
$\Psi(0)= 0$.

\item[(ii)] For each pair of indexes $\alpha, \beta$ we have
$$g^{\bar \beta\alpha}= f_\alpha(z)\bar f_\beta(z) \Psi^{\alpha\bar\beta}(z'', w, \rho),$$
where $g^{\ol \beta \alpha}$ are the coefficients of the inverse of $(g_{\alpha\bar\beta})$.
\end{enumerate}
\end{lemme}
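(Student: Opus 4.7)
\emph{Proof plan.} The strategy is to rewrite the matrix $(g_{\alpha\bar\beta})$ as a diagonal conjugate of a matrix $\Psi=(\Psi_{\alpha\bar\beta})$ that is smooth and non-degenerate in the variables $(z'', w, \rho)$ near the origin, so that both (i) and (ii) become a matter of elementary linear algebra.

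Set $F := \mathrm{diag}(f_1,\ldots,f_{n+1})$, with the convention $f_{n+1}:=1$ (consistent with the fact that $t=z_{n+1}$ does not appear in the equation of $E$). Identity \eqref{a2} reads $g = F^{-1}\Psi (F^{*})^{-1}$, and taking determinants or inverses respectively yields
\[
\det g \;=\; \prod_{\alpha=1}^{n}|f_\alpha|^{-2}\cdot \det\Psi,\qquad g^{\bar\beta\alpha} \;=\; f_\alpha\,\bar f_\beta\cdot \Psi^{\alpha\bar\beta},
\]
where $\Psi^{\alpha\bar\beta}$ denotes the corresponding entry of $\Psi^{-1}$. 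Both parts of the lemma will follow once we verify that $\Psi$ extends as a smooth matrix-valued function of $(z'', w, \rho)$ near the origin and that $\det\Psi(0)>0$.

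Smoothness is a term-by-term verification in \eqref{coeff poin poin}, using the key substitution $z_i=w_i\rho_i$ (valid for $i\le p$) to convert any germ smooth in $z$ into one smooth in $(z'', w, \rho)$: the dominant diagonal singularity $1/|z_j\log|z_j|^2|^2$ is exactly canceled by $|f_j|^2=|w_j|^2$ and contributes the Kronecker value $\delta_{jk}$; the mixed $A_j$- and $B_k$-terms become $A_j\rho_j\bar f_k$ and $B_k\rho_k f_j$; and the remaining $g^0_{j\bar k}$ and $C_\ell\rho_\ell$ pieces yield smooth functions of $(z'', w, \rho)$ after multiplication by $f_j\bar f_k$. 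Evaluating at $(w,\rho)=0$, an inspection shows that $\Psi(z'',0,0)$ is block diagonal: the identity $I_p$ in the upper $p\times p$ block, the restriction of $g^0$ to the transverse directions in the lower $(n+1-p)\times(n+1-p)$ block, and vanishing off-diagonal blocks (each contribution there carries a vanishing factor $w_j$, $\bar w_k$, or $\rho_\ell$). Positivity of $\det\Psi(0)$ then follows by combining the positivity of $\det\Psi=\prod|f_\alpha|^{2}\det g$ on the punctured set $\{w\neq 0\}$ with the standard quasi-isometry of $\om_E$ with the model Poincar\'e metric $\sum_{i\le p}i\,dz_i\wedge d\bar z_i/|z_i\log|z_i|^2|^2+\sum_{j>p}i\,dz_j\wedge d\bar z_j+i\,dt\wedge d\bar t$ (cf.\ \cite{G12}), which provides a uniform lower bound. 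Statement (ii) is then immediate via the cofactor formula for $\Psi^{-1}$, which is smooth near the origin as soon as $\Psi$ is smooth and $\det\Psi(0)\neq 0$. The main technical care is required in tracking the cross terms $A_j$ and $B_k$ cleanly in the $(w,\rho)$ variables; the rest is a routine consequence of the structural identity $g=F^{-1}\Psi(F^{*})^{-1}$.
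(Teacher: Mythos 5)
Your argument is correct and supplies exactly the ``direct calculation'' that the paper declares it will skip. The structural rewriting $g=F^{-1}\Psi (F^{*})^{-1}$ with $F$ diagonal is indeed what makes both (i) and (ii) immediate once one knows that $\Psi$ extends smoothly in $(z'',w,\rho)$ and that $\det\Psi$ is bounded away from zero near the origin; and your term-by-term inspection (the diagonal singularity $\delta_{jk}/|z_j\log|z_j|^2|^2$ is exactly cancelled by $|f_j|^2$, the $A_j$-- and $B_k$--terms pick up a $\rho$ or a $\bar f_k$, etc.)\ correctly identifies the block form $\Psi(z'',0,0)=I_p\oplus(\text{positive block})$. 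Your appeal to the quasi-isometry of $\om_E$ with the local model Poincar\'e metric to obtain the uniform lower bound on $\det\Psi$ is the right way to close the positivity step, and it is actually a bit more careful than the paper, which simply asserts after equation \eqref{a2} that $(\Psi_{\alpha\bar\beta})$ is positive definite and then skips the rest.

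One small inconsistency to tidy up: you set $F=\mathrm{diag}(f_1,\ldots,f_{n+1})$, i.e.\ you include the $t$-direction, whereas the matrix $(g_{\alpha\bar\beta})$ and its inverse $g^{\bar\beta\alpha}$ in Lemma~\ref{coef} (and in the lifting formula \eqref{vecform}) refer to the $n\times n$ fiberwise block $\alpha,\beta\in\{1,\dots,n\}$; this is why (i) has $\prod_{\alpha=1}^{n}$. Since $f_{n+1}=1$, nothing in your computation changes, but the statement of the conjugation identity should use $F=\mathrm{diag}(f_1,\ldots,f_n)$ to match the object whose determinant and inverse are being computed.
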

\begin{proof}
Both statements above are obtained by a direct calculation, using the expression \eqref{a2} of the coefficients. We skip the straightforward details.
\end{proof}
\medskip

\noindent Thanks to Lemma \ref{coef} it is easy to infer the following useful estimates: for each set of indexes $\alpha, \beta, q, r$ we have  
{\small
\begin{align}
\label{a4} \frac{\partial g^{\ol \beta \alpha}}{\partial z_q}(z) =&   \left(\delta_{\alpha q}\delta_\alpha (1+ \log|z_\alpha|^2)\bar f_\beta(z)+ \delta_{\beta q}\delta_\beta f_\alpha(z)\frac{\ol z_q}{z_q}\right)\Psi_{\alpha\bar\beta}(z'', w, \rho)\\ 
\nonumber & + (1+\delta_q (\log|z_q|^2-1)) f_\alpha(z)\bar f_{ \beta}(z)\Psi_{\alpha\bar\beta}(z'', w, \rho)  + \frac{\delta_q}{z_q\log^2|z_q|^2}f_\alpha(z)\bar f_{ \beta}(z)\Psi_{\alpha\bar\beta}(z'', w, \rho)
\end{align}}
as well as
\begin{align}
\frac{\partial^2 g^{\ol \beta \alpha}}{\partial z_q\partial \ol z_r}(z)  \label{a5}= &  {\, \mathcal O}(1)\frac{\partial}{\partial \ol z_r}\left(\delta_{\alpha q}\delta_\alpha (1+ \log|z_\alpha|^2)\bar f_\beta(z)+ \delta_{\beta q}\delta_\beta f_\alpha(z)\frac{\ol z_q}{z_q}\right)\\
\nonumber & + {\mathcal O}(1)\left(1+ \frac{\delta_r}{\ol z_r\log|z_r|^2}\right)\left(
\delta_{\alpha q}\delta_\alpha (1+ \log|z_\alpha|^2) \bar f_\beta(z)+ \delta_{\beta q}\delta_\beta f_\alpha(z)\frac{\ol z_q}{z_q}\right)\\
\nonumber & + {\mathcal O}(1)\frac{\delta_q\delta_{rq}}{|z_q|^2\log^3|z_q|^2}f_\alpha(z)\bar f_{ \beta}(z)\\ 
\nonumber & + {\mathcal O}(1)\left(1+ \frac{\delta_q}{z_q\log|z_q|^2}\right)\left(1+ \frac{\delta_r}{\ol z_r\log|z_r|^2}\right)f_\alpha(z)\bar f_{ \beta}(z)\\
\nonumber & + {\mathcal O}(1)\left(1+ \frac{\delta_q}{z_q\log|z_q|^2}\right)\left(\delta_{\beta r}\delta_\beta (1+ \log|z_\beta|^2)f_\alpha(z)+ \delta_{\alpha r}\delta_\alpha \bar f_\beta(z)\frac{z_r}{\ol z_r}\right)
\end{align}

\noindent Again, in the relations \eqref{a4}--\eqref{a5} we are using $\Psi_{\alpha\ol \beta}$ and $\mathcal O(1)$ as generic notation, these functions are allowed to change from one line to another, the point is that they are of the same type. The verification of formula \eqref{a5} is immediate, one simply takes the derivative in \eqref{a4}.

%%%%%%%%%%%%%%%%%%%%%%%%%%%%%%%%%%%%%%%%%%%%%%%%%%%%%%%%%%%%%%%%%%%%%%%%%%%%%%%%%%%%%%%%%%%%%%%%%%%%%%
\bigskip

\subsubsection{Horizontal lift, $\mu$ and $\eta$}

One can define the lift $V$ of $\frac{\d}{\d t}$ with respect to the Poincaré type metric $\om_E$, cf. \cite{Siu86, Bo11, Schum}. It is a vector field of type $(1,0)$ on $\cX^\circ$ such that $dp$ 
maps it to $\frac{\d}{\d t}$ pointwise on $\cX^\circ$ and which is orthogonal to $T^{1,0}X_t$ for any $t$. 
In local coordinates, one has the following formula 
\begin{equation}\label{vecform}
V =\frac{\d}{\d t} -\sum_{\alpha, \beta} g^{\bar \beta \alpha }g_{t \bar \beta}\frac{\d}{\d z_{\alpha}} .
\end{equation}
Let $u$ be a holomorphic section of $p_{*}(\O (K_{\cX/D}+ L) \otimes \mathcal{I} (h_L))$. 
One can choose an arbitrary representative $U_0$ of $u$, this is an $L$-valued $(n,0)$-form on $\cX$ which coincides with $u_t$ on $X_t$. Now, let 
\begin{equation}
\label{uu}
\uu:=V\lrcorner \, (dt \wedge U_0).
\end{equation}
%To lighten the notation, we have omitted the dependence in $\epsilon$ in $\uu$ but we should keep in mind that $\uu$ depends strongly on $\ep$.  
One can write locally (using the previous system of coordinates): $$U_0\wedge dt=a(z,t) dt \wedge dz_1\wedge \ldots \wedge dz_n $$
where $a(z,t)$ is holomorphic with values in $L$. We have an explicit formula: 
\begin{equation}\label{explicit}
 \uu=a(z,t) \big(dz_1\wedge \ldots \wedge dz_n-\sum_{\alpha,\beta} (-1)^{\alpha}g^{\bar \beta \alpha }g_{t \bar \beta} dt\wedge dz_1\wedge \ldots\wedge \widehat{dz_{\alpha}} 
 \wedge \ldots \wedge dz_n\big).
\end{equation}
%Clearly, given the expression of the metric above,  $\displaystyle \frac{\uu}{a (z,t)}$ is bounded with respect to $\om_E$ across $E$.

\medskip

\noindent By construction, we have 
\begin{equation}\label{add1}
dt \wedge \uu= dt \wedge U_0. 
\end{equation}
Therefore, although $\uu$ is only well defined on $\cX^\circ$, $\uu\wedge d t$ can be extended 
as a smooth form on $\cX$. 

\begin{remark}\label{prim} {\rm The representative $\uu$ in \eqref{explicit} has the following interesting property
\begin{equation}\label{a10}
\uu\wedge \om_E|_\Omega= a(z, t)g_{t\bar t} \, dt\wedge d\ol t\wedge dz_1\wedge \ldots \wedge dz_n.
\end{equation}
In particular, we have $\displaystyle \frac{\uu\wedge \om_E}{dt}\Big|_{X_t}= 0$ for every $t$.
}
\end{remark}

\smallskip

\noindent Moreover, as $U_0$ is a smooth representative of $u$ on $\cX$ and $u$ is a holomorphic form, we have $\bar\partial (U_0 \wedge dt)=0$ on $\cX$.
Combining with \eqref{add1}, we know that $\bar\partial \uu\wedge dt =0$ on $\cX^\circ$.
As a consequence, we can find a smooth $(n-1, 1)$-form $\eta $ on $\cX^\circ$ such that
\begin{equation}\label{add2}
 \bar\partial \uu=dt \wedge \eta \qquad\text{on } \cX^\circ
\end{equation}
and one has 
\begin{align}
\label{add3}
D' \uu&\underset{\rm loc}{=}\d \uu  -\d \varphi_L \wedge \uu\\
\nonumber &= dt \wedge \mu
\end{align}
for some $(n,0)$-form $\mu$ on $\cX^\circ$.

\subsubsection{Estimates for $\eta$ and $\mu$.}
\label{amis}

\noindent In this section our goal is to establish the following statement.

\begin{lemme}\label{meta}
We consider the form $\displaystyle \eta|_{X_t}, \mu|_{X_t}$ 
induced by the representative $\uu$ constructed in the previous section in \eqref{add2} and \eqref{add3} respectively. Then $\displaystyle \eta|_{X_t}, \mu|_{X_t}$
as well as $\displaystyle \dbar \mu|_{X_t}$ are $L^2$ 
with respect to $(\om_E, e^{-\phi_L})$.
\noindent

Moreover, $\uu, \eta, \mu$ and $\dbar \mu$ are also in $L^{2}(\cX^\circ)$ with respect to $(\om_E, e^{-\phi_L})$, say up to shrinking $D$.
\end{lemme}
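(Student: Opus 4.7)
The lemma will be proven by an explicit local computation in the coordinates $(z_1,\dots,z_n,z_{n+1}=t)$ afforded by \ref{A1}, exploiting the formula \eqref{explicit} for $\uu$ together with Lemma~\ref{coef} and the estimates \eqref{a4}--\eqref{a5}. Replacing $U_0$ by its holomorphic extension to $\Omega$ -- whose existence is secured by the K\"ahler Ohsawa--Takegoshi theorem already invoked in Lemma~\ref{prelemme} -- we may assume the coefficient $a(z,t)$ is holomorphic in every variable on $\Omega$; then $\dbar(a\,dz_1\wedge\dots\wedge dz_n)=0$ and $\dbar\uu$ only sees derivatives of the factors $g^{\bar\beta\alpha}g_{t\bar\beta}$ in \eqref{explicit}. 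Writing $\dbar\uu=dt\wedge\eta$ and $D'\uu=\partial\uu-\partial\varphi_L\wedge\uu=dt\wedge\mu$ yields explicit formulas for $\eta$ and $\mu$ (and, after one more differentiation, for $\dbar\mu$) in terms of $a$, first and second derivatives of $g^{\bar\beta\alpha}g_{t\bar\beta}$, and $\partial\varphi_L$, $\ddbar\varphi_L$.

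\textbf{Pointwise bounds and integration.} Set $f_r=z_r\log|z_r|^2$ for $r\le p$ and $f_r=1$ otherwise. By Lemma~\ref{coef}(ii) one has $|dz_\alpha|_{\om_E}\asymp|f_\alpha|$ and $|d\bar z_r|_{\om_E}\asymp|f_r|$, while \eqref{a4} bounds $|\partial(g^{\bar\beta\alpha}g_{t\bar\beta})/\partial\bar z_r|$ by $|f_\alpha|/|f_r|$ up to a smooth bounded factor. The exponents balance and produce a pointwise estimate $|\eta|_{\om_E}^2\lesssim |a|^2$ times a bounded function on $\Omega$. The same dimensional accounting controls $\mu$ -- using that the components $dz_i/z_i$ and $d\phi_I/(\phi_I-\log\prod|z_i|^{2k_i})$ of $\partial\varphi_L$ extracted from \ref{A2} have bounded $\om_E$-norm -- as well as $\dbar\mu$, now combining \eqref{a5} with the fact that $\ddbar\varphi_L$ has bounded $\om_E$-norm on $\cX^\circ$ (the log and log-log potentials in \ref{A2} generate precisely Poincar\'e-type curvature). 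Combined with the volume asymptotics $\om_E^n\asymp\prod_{\alpha\le p}|f_\alpha|^{-2}\,dV_0$ from Lemma~\ref{coef}(i), each fiberwise $L^2$ integrand is dominated by $|a|^2 e^{-\phi_L}\prod_{\alpha\le p}|f_\alpha|^{-2}$ times a smooth density, which is locally integrable on $X_t^\circ$ because $a\in\mathcal I(h_L|_{X_t})$ by Lemma~\ref{prelemme}: the multiplier ideal absorbs both the Poincar\'e poles and the extra singularity of $e^{-\phi_L}$ along $E$. All bounds being locally uniform in $t\in D$ on compact subsets of $\cX$, integrating once more over $\cX^\circ$ after shrinking $D$ to a relatively compact subdisk delivers the $L^2(\cX^\circ)$-estimates.

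\textbf{Main obstacle.} The principal difficulty is the combinatorial bookkeeping for $\dbar\mu$: the many singular terms in \eqref{a5} and in $\ddbar\varphi_L$ must be tracked individually, and one needs to verify term by term that the contributions of $i\Theta_{h_L}(L)$ do not produce uncontrolled singularities. Conceptually, however, a single principle prevails throughout -- the coefficients of $\om_E^{-1}$ and of $\om_E^n$ are calibrated so that the logarithmic and polar blow-ups introduced by differentiation are exactly compensated, leaving integrands ultimately dominated by $|a|^2 e^{-\phi_L}$ times a bounded function.
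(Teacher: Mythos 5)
Your overall strategy — explicit coordinate computations using \eqref{explicit}, Lemma~\ref{coef} and the estimates \eqref{a4}--\eqref{a5}, followed by integrability via the multiplier ideal condition on $a$ — is indeed the route the paper takes. However, there is a genuine gap in the crucial quantitative step, and the way you have organized the bound is internally inconsistent.

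First, the claimed pointwise estimate $|\eta|^2_{\om_E}\lesssim |a|^2$ times a bounded function is not correct: from \eqref{a4}, the derivative $g^{\bar\beta\alpha}_{,\bar r}$ contains a term $\delta_{\alpha r}\delta_\alpha(1+\log|z_\alpha|^2)\bar f_\beta$, and when multiplied against $g_{t\bar\beta}\asymp 1/\bar f_\beta$ this produces a factor of order $\log|z_r|^2$, which is \emph{not} dominated by $|f_\alpha|/|f_r|=1$ when $\alpha=r$. This is precisely the point the paper records in \eqref{b1}: after contracting with $\om_E$ and weighing by $dV_{\om_E}$, the Poincar\'e poles cancel exactly and what survives is a density of the form $C\,(1+\delta_r\delta_{\alpha r}\log^2|z_r|^2)\,dV_\om$ — bounded times a \emph{logarithmically divergent} factor, not a bounded one.

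Second, your subsequent claim that ``each fiberwise $L^2$ integrand is dominated by $|a|^2 e^{-\phi_L}\prod_{\alpha\le p}|f_\alpha|^{-2}$ times a smooth density, which is locally integrable because the multiplier ideal absorbs both the Poincar\'e poles and the extra singularity of $e^{-\phi_L}$'' would, if taken at face value, \emph{not} give integrability: the membership $a\in\mathcal I(h_L|_{X_t})$ only guarantees $\int|a|^2 e^{-\phi_L}<\infty$, and certainly does not yield integrability of $|a|^2 e^{-\phi_L}\prod_{\alpha\le p}|a_\alpha|^{-2}(\log|z_\alpha|^2)^{-2}$ against a smooth density. The correct picture is the opposite of the one you describe: the Poincar\'e poles from $dV_{\om_E}$ are exactly compensated by the positive powers of $|f_\alpha|$ coming from the inverse metric (Lemma~\ref{coef}(ii)), so that no poles of this type survive — but a residual $\log^2|z_r|^2$ does. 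This logarithmic factor is the true obstruction, and it is handled via a H\"older argument: since $h_L$ has analytic singularities and $a\in\mathcal I(h_L)$, one has $e^{-\varphi_L}\in L^{1+\epsilon}$ with respect to $|a|^2 dV_\om$ for some $\epsilon>0$, while $\log|z_r|^2\in L^q$ for every finite $q$. That final H\"older step is missing from your argument and cannot be dispensed with. The rest of the verification for $\mu$ and $\dbar\mu$, while combinatorially heavier (especially the $\partial^2 g^{\bar\beta\alpha}/\partial z_q\partial\bar z_r$ contributions in \eqref{a5}), runs on the same ``cancel poles, integrate the surviving log via H\"older'' principle, and your proposal would need the same correction throughout.
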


\begin{proof} This is routine: we can easily obtain the explicit expression of $\eta$ and $\mu$, and then we simply evaluate their respective $L^2$ norms by using the estimates \eqref{a4}--\eqref{a5}.
We detail to some extent the calculations next.
\smallskip

\noindent First of all, we have
\begin{eqnarray*}
\eta|_{X_t} = a (z,t) \sum_{\alpha,\beta, r} (-1)^{\alpha+ 1}(g^{\bar \beta \alpha }_{,\bar r}g_{t \bar \beta}+ g^{\bar \beta \alpha }g_{t \bar \beta,\bar r}) \, 
dz_1\wedge \ldots\wedge \widehat{dz_{\alpha}}  \wedge \ldots \wedge dz_n\wedge d\bar z_{r} .
\end{eqnarray*} 
where we use the notation $\displaystyle g^{\bar \beta \alpha }_{,\bar r}:= \frac{\partial g^{\bar \beta \alpha }}{\partial \ol z_r}$.

\noindent We consider first the quantity 
\begin{equation}\label{a11}
\left|g^{\bar \beta \alpha }_{,\bar r}g_{t \bar \beta}\right|^2.
\end{equation}
Thanks to the equality \eqref{a4}, up to a constant it is smaller than 
\begin{equation}\label{a12}
\frac{1}{|f_\beta|^2}\left( \delta_{\alpha r}\delta_\alpha \log^2 |z_\alpha|^2|f_\beta|^2+ \delta_{\beta r}\delta_\beta|f_\alpha|^2+\Big(1-\delta_r+\frac{\delta_r}{|z_r|^2 \log^2 |z_r|^2}\Big)|f_\alpha f_\beta|^2\right)
\end{equation}
which simplifies to 
$$|f_\alpha|^2+\delta_r\Big[\delta_{\alpha r} \log^2 |z_r|^2+\delta_{\beta r} \frac{|f_\alpha|^2}{|f_r|^2}+\Big(\frac{1}{|z_r|^2 \log^2 |z_r|^2}-1\Big)|f_\alpha|^2\Big].$$ 
Since 
\[\left|dz_1\wedge \ldots\wedge \widehat{dz_{\alpha}}  \wedge \ldots \wedge dz_n\wedge d\bar z_{r}\right|^2_{\om_E}dV_{\om_E} \lesssim \frac{|f_r|^2}{|f_\alpha|^2}dV_\om\]
we eventually find
\begin{equation} \label{b1}
\left|g^{\bar \beta \alpha }_{,\bar r}g_{t \bar \beta}\right|^2\left|dz_1\wedge \ldots\wedge \widehat{dz_{\alpha}}  \wedge \ldots \wedge dz_n\wedge d\bar z_{r}\right|^2_{\om_E}dV_{\om_E}\leq C(1+\delta_r\delta_{\alpha r}\log^2|z_r|^2)\, dV_\om.
\end{equation}

\noindent The term 
\begin{equation} \label{b2}
\left|g^{\bar \beta \alpha }g_{t \bar \beta,\bar r}\right|^2
\end{equation}
is bounded by 
\begin{equation} \label{b3}
\delta_{\beta r}\delta_\beta(1+ \log^2|z_\beta|^2)\frac{|f_\alpha|^2}{|f_\beta|^2}+ \left(1-\delta_r+ \frac{\delta_r}{|w_r|^2}\right)|f_\alpha|^2
\end{equation}
and we see that the same thing as before occurs, i.e. 
\begin{equation} \label{b4}
\left|g^{\bar \beta \alpha }g_{t \bar \beta,\bar r}\right|^2\left|dz_1\wedge \ldots\wedge \widehat{dz_{\alpha}}  \wedge \ldots \wedge dz_n\wedge d\bar z_{r}\right|^2_{\om_E}dV_{\om_E}\leq C(1+\delta_r\log^2|z_r|^2)\, dV_\om.
\end{equation}
Thus, the restriction of $\eta$ to any fiber of the family $p:\cX\to D$ is $L^2$ with respect to $(\om_E, e^{-\phi_L})$ since the holomorphic function $a(z, t)$ belongs to the multiplier ideal sheaf defined by $h_L$. Indeed, setting $\nu:=|a|^2dV_\omega$, one has $e^{-\varphi_L}\in L^{1+\ep}(\nu)$ for some $\ep>0$ (this is easily checked since $\varphi_L$ has analytic singularities and $e^{-\varphi_L}\in L^1(\nu)$) while $\log |z_r|^2\in L^p(\nu)$ for all $p>0$, so that Hölder inequality shows the claim.
\medskip

\noindent 
The local expression of the form $\displaystyle \mu |_{X_t}$ is obtained by restricting $\displaystyle \frac{D'  \uu}{dt}$ to the fiber $X_t$; it reads as 
\begin{align} 
\frac{\mu |_{X_t}}{dz}= & \label{b5}
\, a_{,t} -a \sum_{\alpha,\beta} (g^{\bar \beta \alpha }_{,\alpha}g_{t \bar \beta}+g^{\bar \beta \alpha }g_{t \bar \beta, 
\alpha}) - \sum_{\alpha,\beta} a_{, \alpha} g^{\bar \beta \alpha }g_{t \bar \beta} \\
 &- a(z, t)\varphi_{L, t}+ a(z,t)\sum_{\alpha, \beta}\varphi_{L, \alpha} g^{\bar \beta \alpha }g_{t \bar \beta} \nonumber \\
\nonumber
\end{align}
where $dz:= dz_1\wedge \dots \wedge dz_n$.

\noindent By our transversality conditions, the function $a_{,t}$ is still $L^2$ with respect to 
$h_L$. The term $\displaystyle g^{\bar \beta \alpha }_{,\alpha}g_{t \bar \beta}+g^{\bar \beta \alpha }g_{t \bar \beta, \alpha}$ is treated as we did for \eqref{a11} and \eqref{b2}, with the exception that the indexes $r$ and $\beta$ coincide (and the type of the form is different). We have up to some constant
{\small
\begin{align} 
\label{b6} \left|g^{\bar \beta \alpha }_{,\alpha}g_{t \bar \beta}\right| &\leq \,
\left((1-\log |z_\alpha|^2) |f_\beta| + \delta_{\alpha_\beta}\delta_\alpha |f_\alpha|+\Big[(1-\delta_\alpha)+\frac{\delta_\alpha}{|z_\alpha| \log^2|z_\alpha|^2}\Big] \cdot |f_\alpha f_\beta|\right) \cdot \frac{1}{|f_\beta|}\\
\nonumber & \lesssim 1-\log |z_\alpha|^2
\end{align}}
and we can bound this term as before. The second term satisfies $g^{\bar \beta \alpha }g_{t \bar \beta, 
\alpha}=g^{\bar \beta \alpha }g_{\alpha \bar \beta, t}$ since $\om_E$ is Kähler, hence it is bounded. 

\noindent Next, we have $\displaystyle \left|g^{\bar \beta \alpha }g_{t \bar \beta}\right|^2\leq C|z_\alpha|^2\log^2|z_\alpha|$, and also that 
\begin{equation} \label{b77}
|a_{, \alpha}|^2|z_\alpha|^2\log^2|z_\alpha|^2 e^{-\varphi_L}=\mathcal O (|a|^2 \log^2 |z_\alpha|^2 e^{-\varphi_L}) \in L^1
\end{equation}
for any $\alpha= 1,\dots, p$ again by the transversality/$L^2$ conditions we impose to $a(z, t)$, so the third term in \eqref{b5} is in $L^2$. A similar argument applies to the second line of \eqref{b5}.
\medskip

\noindent The last part of the proof of our lemma concerns $\dbar \mu$; the computations are using \eqref{a5}. We will only discuss the term
\begin{equation} \label{b8}
\frac{\partial}{\partial \ol z_r} \left(g^{\bar \beta \alpha }_{,\alpha}g_{t \bar \beta}+g^{\bar \beta \alpha }g_{t \bar \beta, 
\alpha}\right)dz\wedge d\ol z_r
\end{equation}
since for \eqref{b8} the computations are the most involved. The reason why we are able to conserve the $L^2$ property is that the partial derivative with respect to $\ol z_r$ will induce a new term of order
$\mathcal O(1/|z_r|)$ if $r\leq p$, and its square will be compensated by $|d\ol z_r|^2_{\om_E}$.
As for the computations:
the singularities induced by $\displaystyle g^{\bar \beta \alpha }_{,\alpha \bar r}g_{t \bar \beta}$ are bounded by the following quantity
\begin{align}
\left|g^{\bar \beta \alpha }_{,\alpha \bar r}g_{t \bar \beta}\right|\leq & \, \delta_\alpha\delta_{r\alpha}\frac{1}{|z_r|}+ \frac{\delta_\alpha\delta_{\beta r}}{|f_\beta|}\log\frac{1}{|z_\alpha|^2}\log\frac{1}{|z_\beta|^2}+ \delta_\alpha\delta_{r\alpha\beta}\frac{1}{|f_\beta||z_\alpha|^2}\\
 \nonumber &+ \frac{\delta_r}{|w_r|}\left(\delta_{\alpha}\log\frac{1}{|z_\alpha|^2}+ \delta_{\alpha}\delta_{\alpha\beta}\right)\\
\nonumber & +\frac{\delta_{\alpha}\delta_{r}}{|w_\alpha w_r|}|f_\alpha| + \frac{\delta_\alpha}{|w_\alpha|}\left(\delta_\beta\delta_{\beta r}\log\frac{1}{|z_\beta|^2}\frac{|f_\alpha|}{|f_\beta|}+ \delta_\alpha\delta_{\alpha r}\right)
\end{align}
from which we see that the first part of \eqref{b8} is $L^2$. The remaining terms are 
\begin{equation}
\label{three}
\left(g^{\bar \beta \alpha }_{,\alpha}g_{t \bar \beta, \bar r}+ g^{\bar \beta \alpha }_{,\bar r}g_{t \bar \beta, \alpha}+ g^{\bar \beta \alpha }g_{t \bar \beta, 
\alpha\bar r}\right)dz\wedge d\ol z_r
\end{equation}
for which one could use the fact that the metric $\om_E$ is K\"ahler
and so we have
\begin{equation} \label{b7}
g_{t \bar \beta, \alpha}= g_{\alpha \bar \beta, t}, \qquad g_{t \bar \beta, \alpha\bar r}= g_{\alpha \bar \beta, t\bar r}.
\end{equation}
The equalities \eqref{b7} are simplifying a bit the calculations, since the derivative with respect to $t$ does not increase at all the order of the singularity.

For the first term of \eqref{three}, we have, up to a multiplicative constant

\begin{align*}
\left|g^{\bar \beta \alpha }_{,\alpha} g_{t \bar \beta, \bar r}\right| \le &
\left| \delta_\alpha (1-\log|z_\alpha|^2) \bar f_\beta +\delta_\alpha \delta_{\alpha\beta} f_\alpha+ \left( (1-\delta_\alpha)+\frac{\delta_\alpha}{z_\alpha \log^2|z_\alpha|^2} \right) f_\alpha \bar f_\beta\right|\times \\
& \left| \delta_r\delta_{r\beta}\frac{1}{|z_r|^2\log^2 |z_r|^2}+\frac{1}{\bar f_\beta}\left(1+\frac{\delta_r}{\bar z_r \log^2 |z_r|^2}\right)\right|\\
\le & \left| \left(1-\delta_\alpha \log |z_\alpha|\right) \cdot \left( 1+\frac{\delta_r}{|z_r|(-\log |z_r|^2)}) \right) \right|
 \end{align*}
 In particular, we get 
 \[\left|g^{\bar \beta \alpha }_{,\alpha} g_{t \bar \beta, \bar r}\right| \cdot |d\bar z_r|_{\om_E} \lesssim 1-\delta_\alpha \log |z_\alpha|\]
 and we are done with this term as before.
 
 For the second term of \eqref{three}, we have, using \eqref{b7}
 {\small
 \begin{align*}
 \left| g^{\bar \beta \alpha }_{,\bar r}g_{\alpha \bar \beta, t}\right| \le &
 \frac{1}{|f_\alpha f_\beta|}\cdot \Bigg(\delta_{\alpha r}\delta_\alpha(1-\log |z_\alpha|^2)| f_\beta|+\delta_{\beta r}\delta_\beta |f_\alpha|\\
 &+ \Big[(1-\delta_r)+\delta_r (-\log |z_r|^2+\frac{1}{|z_r|\log^2|z_r|^2}\Big]|f_\alpha f_\beta|\Bigg) \\
 \le &1+\delta_{r}\frac{-\log |z_r|^2}{|z_r| (-\log |z_r|^2)}.
 \end{align*}
}
In particular, we get 
\[\left|g^{\bar \beta \alpha }_{,\bar r}g_{\alpha \bar \beta, t}\right| \cdot |d\bar z_r|_{\om_E} \lesssim 1-\delta_r \log |z_r|\]
and we are done. 

As for the last term of \eqref{three}, we use \eqref{b7} and \eqref{a5} to see that the expansion of
$  g^{\bar \beta \alpha }g_{t \bar \beta, \alpha\bar r}$ will only involve terms like
\[\psi_{\alpha\bar \beta, \bar r}, \frac{\partial_{\bar r} f_\alpha}{f_\alpha}, \frac{\partial_{\bar r} \bar f_\beta}{\bar f_\beta}\]
which are respectively of order \[\frac{\delta_r}{z_r \log |z_r|^2},\frac{\delta_r\delta_{\alpha r}}{z_r \log |z_r|^2}, \frac{\delta_r\delta_{\beta r}}{\bar z_r}.\]
All in all, we find 
\[\left| g^{\bar \beta \alpha }g_{t \bar \beta, \alpha\bar r}\right| \cdot |d\bar z_r|_{\om_E} \lesssim 1-\delta_r \log |z_r|\]
and this is the end of the main part of the proof.
\medskip

\noindent The integrability of $\uu, \eta, \mu$ on $\cX^\circ$ follows directly from the estimates we have obtained above. Concerning $\dbar \mu$ there is one additional term given by 
$\displaystyle \frac{\partial}{\partial \ol t}$ of the expression in \eqref{b5}. This is however harmless:
given the shape of the coefficients $(g_{\alpha\ol\beta})$ (i.e. the transversality conditions), the additional anti-holomorphic derivative with respect to $t$ induces no further singularity and the estimates e.g. for the term
$$\frac{\partial}{\partial \ol t}\left(g^{\bar \beta \alpha }_{,\alpha}g_{t \bar \beta}\right)$$
will be completely identical to those already obtained 
$\displaystyle g^{\bar \beta \alpha }_{,\alpha}g_{t \bar \beta}$. We leave the details to the interested reader. 
\end{proof}

\begin{remark}
Using quasi-coordinates adapted to the Poincaré metric $\om_E$ (cf. e.g. \cite{CY3, KobR, TY87}), we can prove easily that $\eta$ and its derivatives are in $L^2$. However, that argument cannot be applied to $\mu$ because of the singularity in the Chern connection of $(L,h_L)$.
\end{remark}

\subsection{A few results from $L^2$ Hodge theory}\label{hodge}

We recall briefly a few results of $L^2$-Hodge theory for a complete manifold endowed with a Poincaré type metric, following closely \cite{JCMP}. We are in the following setting.

Let $X$ be a $n$-dimensional compact Kähler manifold, and let $(L, h_L)$ be a  
line bundle  endowed with a (singular) metric $h_L= e^{-\phi_L}$ such that
\begin{enumerate}[label=$\bullet$]
\item $h_L$ has analytic singularities;
\item Its Chern curvature satisfies $\displaystyle i\Theta_{h_L}(L) \geq 0$ in the sense of currents.
\end{enumerate} 

\bigskip 

 We consider a modification $\pi: \wh X\to X$ of $X$ such that the support of the singularities of 
$\varphi_L\circ \pi$ is a simple normal crossing divisor $E$. As usual, we can construct $\pi$ such that its restriction to $\wh X\setminus E$ is an biholomorphism. Then 
\begin{equation}\label{eq30}
\varphi_L\circ \pi|_{\Omega}\equiv \sum_{\alpha=1}^p e_\alpha\log |z_\alpha|^2
\end{equation}
modulo a smooth function. Here $\Omega\subset \wh X$ is a coordinate chart, and 
$(z_\alpha)_{\alpha=1,\dots, n}$ are coordinates such that
$E\cap \Omega= (z_1\dots z_p= 0)$.

\noindent Let $\wh \omega_E$ be a complete K\"ahler metric on $\wh X\setminus E$,
with Poincar\'e singularities along $E$, and let 
\begin{equation}\label{eq31}
\omega_E:= \pi_\star(\wh \omega_E)
\end{equation}
be the direct image metric. We note that in this way $(X^\circ, \omega_E)$ becomes a 
complete K\"ahler manifold, where $X^\circ := X\setminus (h_L= \infty)$. 
\smallskip

\begin{remark}
\label{extension}
If $u$ is a $L$-valued $(p,0)$-form on $X^\circ$ which is $L^2$ with respect to $\om_E$, then it is also $L^2$ with respect to an euclidean metric on $\hat X$ (or $ X$, too).Therefore, if $u$ is holomorphic, then it extends holomorphically to  $X$ and more generally any smooth compactification of $X^\circ$.  
\end{remark}

\noindent The main goal of this section is to establish the following decomposition theorem, which is a slight generalization of the corresponding result in \cite{JCMP}.

\begin{thm}\label{Hodge}
	Consider a line bundle $(L, h_L)\to X$ endowed
	with a metric $h_L$ with analytic singularities, as well as
	the corresponding complete K\"ahler manifold $(X^\circ, \omega_E)$, 
	cf. \eqref{eq31}.   If $i\Theta_{h_L} (L) \geq 0$ on $X$, we have the following Hodge decomposition
		$$
		L^2_{n, 1}(X^\circ, L)= {\mathcal H}_{n,1}(X^\circ, L)\oplus
		{\Imm \dbar}\oplus \Imm \dbar^\star.
		$$
Here ${\mathcal H}_{n,1}(X^\circ, L)$ is the space of $L^2$ $\Delta^{''}$-harmonic $(n,1)$-forms.	
\end{thm}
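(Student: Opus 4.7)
The plan is to follow the strategy of the analogous result in \cite{JCMP}, adapting it to the generalised analytic singularities of $h_L$ allowed by assumption \ref{A2}. My starting observation is that on the complete K\"ahler manifold $(X^\circ, \om_E)$, the operators $\dbar$ and $\dbar^\star$ acting on $L^2$ forms are mutually adjoint (Gaffney's theorem), hence the \emph{weak} $L^2$ Hodge decomposition
\[ L^2_{n,1}(X^\circ, L) = \mathcal{H}_{n,1}(X^\circ, L) \oplus \overline{\Imm \dbar} \oplus \overline{\Imm \dbar^\star} \]
is automatic. The substantive content of the theorem is therefore that both image spaces are \emph{already closed}, so that the closures can be dropped.

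Closedness of $\Imm \dbar$ (and of $\Imm \dbar^\star$) is equivalent to a coercivity estimate: I would seek a constant $C>0$ such that for every $u \in \mathrm{Dom}(\dbar)\cap \mathrm{Dom}(\dbar^\star)$ with $u\perp \mathcal{H}_{n,1}(X^\circ, L)$,
\[ \|u\|^2 \le C\bigl(\|\dbar u\|^2 + \|\dbar^\star u\|^2\bigr). \]
The main tool is the Bochner-Kodaira-Nakano identity, which on smooth compactly supported $(n,1)$-forms yields
\[ \|\dbar u\|^2 + \|\dbar^\star u\|^2 \ge \int_{X^\circ} \langle [i\Theta_{h_L}(L), \Lambda_{\om_E}]\,u, u\rangle_{\om_E}\, e^{-\phi_L}\, dV_{\om_E}. \]
To apply this, I need density of smooth compactly supported forms in the graph norm, which the completeness of $\om_E$ provides via Poincar\'e-adapted cut-offs $\chi_R$ built as functions of $-\log(-\log \prod |s_i|^2)$: their differentials satisfy $|d\chi_R|_{\om_E}\to 0$ as $R\to\infty$. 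The singular metric $h_L$ must then be approximated by smooth positively-curved metrics $h_{L,\ep}$, available due to the explicit form of the weights in \ref{A2}, so that the curvature term makes pointwise sense, with a limit $\ep\to 0$ at the end.

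Since the semi-positivity $i\Theta_{h_L}(L)\ge 0$ alone yields only nonnegativity of the Bochner curvature term, upgrading this to strict coercivity on the orthogonal complement of $\mathcal{H}_{n,1}$ requires an extra input. The approach I envision is to exploit the finite-dimensionality of $\mathcal{H}_{n,1}(X^\circ, L)$: via a Dolbeault-type comparison (compatible with Remark~\ref{extension}), this space injects into the coherent cohomology group $H^1(X, K_X\otimes L\otimes \mathcal{I}(h_L))$, which is finite-dimensional by compactness of $X$. Combined with a Peetre-type lemma, coercivity then reduces to a compact embedding of $\mathrm{Dom}(\dbar)\cap \mathrm{Dom}(\dbar^\star)$ into $L^2$ modulo this finite-dimensional subspace. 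The hard part will be establishing such a quasi-compactness despite the non-compact geometry of $(X^\circ, \om_E)$ and the singularities of $h_L$; this is precisely where the analytic nature of the singularities in \ref{A2} becomes essential, as it permits explicit local model estimates on each Poincar\'e cusp that can be glued via a partition of unity.
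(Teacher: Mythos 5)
Your overall strategy is the one the paper follows: weak $L^2$ Hodge decomposition is automatic from completeness of $(X^\circ, \om_E)$, and the substantive point is to show the images of $\dbar$ and $\dbar^\star$ are closed via a Bochner-plus-compactness argument relying on the Poincar\'e-type geometry near the cusp divisor. That said, two aspects deserve comment. First, the proposed regularization of $h_L$ by smooth positively curved metrics is unnecessary and slightly misreads the set-up: on $X^\circ = X\setminus(h_L=\infty)$, where the integration by parts and the Bochner--Kodaira--Nakano identity \eqref{bochner2} are applied, $h_L$ is already smooth, so the curvature operator $A=[i\Theta_{h_L}(L),\Lambda_{\om_E}]$ is a pointwise well-defined smooth object; moreover, producing globally smooth positively curved approximants of a singular positive metric is in general not possible without loss of positivity, so this step would also be problematic if it were needed. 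Second, the paper does not establish a single coercivity estimate at the $(n,1)$-level as you propose. Instead it proves the Poincar\'e inequality of Proposition~\ref{cor:2} for $(p,0)$-forms orthogonal to the space $H^{(p)}$, and then derives closedness of $\Imm\dbar$ from the case $p=n$ (where the Bochner curvature term vanishes, giving $\|u\|^2\le C\|\dbar u\|^2$) and closedness of $\Imm\dbar^\star$ from the case $p=n-2$ after applying the Hodge star to an $(n,2)$-form and using \eqref{bochner2}. Both statements ultimately descend to the near-cusp $\dbar$-Poincar\'e inequality for $(p,0)$-forms (Proposition~\ref{poinc}), which is precisely the ``local model estimate on each Poincar\'e cusp'' you point to at the end but do not spell out; this is the genuinely hard technical ingredient and is imported from \cite{Auv17}. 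The reduction from $(n,1)$-forms to $(p,0)$-forms via $\star$ is a real simplification, because a direct near-cusp estimate for $(n,1)$-forms involving both $\dbar$ and $\dbar^\star$ would be substantially messier to formulate and prove than the $(p,0)$-form estimate, and you should adopt it.
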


\noindent The proof follows closely the aforementioned reference, in which the case $\Theta_{h_L}(L)=0$ is treated.
For the sake of completeness, we will sketch the proof and highlight the differences. 
\medskip

\noindent We start by recalling the following result.
\begin{lemme}\label{cutoff}
	There exists a family of smooth functions $(\mu_\ep)_{\ep> 0}$ with the following properties.
	\begin{enumerate}
		
		\item[\rm (a)] For each $\ep> 0$, the function $\mu_\ep$ has compact support in $X^\circ$, and 
		$0\leq \mu_\ep\leq 1$.
		
		\item[\rm (b)] The sets $(\mu_\ep= 1)$ are providing an exhaustion of $X^\circ$.
		
		\item[\rm (c)] There exists a positive constant $C> 0$ independent of $\ep$ such that 
		we have $$\displaystyle \sup_{X^\circ}\left(|\partial \mu_\ep|_{\omega_E}^2+ |\ddbar \mu_\ep|^2_{\omega_E}\right)\leq C.$$
	\end{enumerate} 
\end{lemme}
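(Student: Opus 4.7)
I would carry out the construction on the resolution $\wh X$, where the singularity divisor $E$ is simple normal crossing and the Poincaré-type metric $\wh \om_E$ admits the explicit local model in \eqref{coeff poin poin}. Since $\pi$ restricts to a biholomorphism $\wh X\setminus E \to X^\circ$ and $\om_E = \pi_\star \wh \om_E$, any smooth function on $\wh X\setminus E$ with bounded first two derivatives in $\wh \om_E$ transfers to a smooth function on $X^\circ$ with bounded first two derivatives in $\om_E$. On $\wh X$, pick defining sections $s_i \in H^0(\wh X, \O_{\wh X}(E_i))$ endowed with smooth hermitian metrics $h_i$, normalised so that $|s_i|^2_{h_i}<e^{-1}$ everywhere, and define the exhaustion function
\[ \tau := \sum_{i=1}^N \log\log\frac{1}{|s_i|^2_{h_i}}, \]
which is smooth and non-negative on $\wh X\setminus E$ and tends to $+\infty$ near $E$. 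Fix once and for all a non-increasing $\theta \in C^\infty(\R, [0,1])$ with $\theta \equiv 1$ on $(-\infty, 0]$, $\theta \equiv 0$ on $[1,+\infty)$, and $\|\theta'\|_\infty, \|\theta''\|_\infty$ finite, and set
\[ \mu_\ep := \theta(\tau - 1/\ep). \]
Property (a) is then immediate, since $\{\mu_\ep > 0\} \subset \{\tau \leq 1/\ep + 1\}$ is relatively compact in $\wh X\setminus E \cong X^\circ$; property (b) follows since $\{\mu_\ep = 1\} = \{\tau \leq 1/\ep\}$ exhausts $\wh X\setminus E$ as $\ep \to 0$.

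For (c), the chain rule gives
\[ |\d \mu_\ep|_{\om_E} \leq \|\theta'\|_\infty |\d\tau|_{\wh\om_E}, \qquad |\ddbar \mu_\ep|_{\om_E} \leq \|\theta''\|_\infty |\d\tau|^2_{\wh\om_E} + \|\theta'\|_\infty |\ddbar\tau|_{\wh\om_E}, \]
so everything reduces to showing that $|\d\tau|_{\wh\om_E}$ and $|\ddbar\tau|_{\wh\om_E}$ are uniformly bounded on $\wh X \setminus E$. This is a standard local SNC computation: in a chart where $E = (z_1 \cdots z_p = 0)$, we have $|s_i|^2_{h_i} = |z_i|^2 e^{-2u_i}$ with $u_i$ smooth, so that, modulo smooth bounded forms, the individual summand $\rho_i := \log\log(1/|s_i|^2_{h_i})$ satisfies
\[ |\d\rho_i|^2_{\mathrm{eucl}} \sim \frac{1}{|z_i|^2 \log^2|z_i|^2}, \qquad i\ddbar\rho_i \sim \frac{-i\,dz_i \wedge d\bar z_i}{|z_i|^2 \log^2|z_i|^2}. \]
Combined with the expansion \eqref{coeff poin poin}, in which the inverse of $\wh\om_E$ has diagonal coefficient $g^{\bar i i} \sim |z_i|^2 \log^2|z_i|^2$ in the singular directions, this yields $|\d \rho_i|_{\wh\om_E} \lesssim 1$ and $|\ddbar \rho_i|_{\wh\om_E} \lesssim 1$ uniformly on $\wh X \setminus E$. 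Summing over $i = 1, \ldots, N$ gives the required uniform bounds for $\tau$, and hence the bound in (c) independently of $\ep$.

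The only point requiring any care is the local computation showing that $\tau$ has uniformly bounded $\d$ and $\ddbar$ in the Poincaré metric. This is essentially tautological given the very definition of $\wh\om_E$ as a metric with Poincaré singularities along $E$, so no serious obstacle arises.
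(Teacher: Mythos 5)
Your construction is correct. The paper itself states Lemma~\ref{cutoff} without proof, as a recall from \cite{JCMP}, so there is no argument in the text to compare against; your proposal supplies the standard construction that is implicit there, and it is the natural one: take the exhaustion $\tau=\sum_i\log\log(1/|s_i|^2)$, whose $\partial$ and $\ddbar$ are bounded in the Poincar\'e metric precisely because that metric is modelled on $\ddbar(-\log\log(1/|z|^2))$, and post-compose with a fixed profile $\theta$. One small remark worth spelling out is that the compactness of $\{\tau\leq 1/\ep+1\}$ in $\wh X\setminus E$ uses that $\tau\to+\infty$ along $E$ (so this sublevel set is closed in the compact manifold $\wh X$); and in the chain-rule estimate, the term $\theta''\,\partial\tau\wedge\dbar\tau$ is controlled by $\|\theta''\|_\infty|\partial\tau|_{\om_E}^2$ via the pointwise bound $|\partial\tau\wedge\dbar\tau|_{\om_E}\leq|\partial\tau|_{\om_E}|\dbar\tau|_{\om_E}$, which you implicitly used. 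Your local verification that $g^{\bar i i}\sim|z_i|^2\log^2|z_i|^2$ absorbs the singularities of $\partial\rho_i$ and $\ddbar\rho_i$ is exactly the right computation and matches the coefficient estimates in \eqref{coeff poin poin}.
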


We have also the Poincaré type inequality for the $\dbar$-operator acting on $(p,0)$-forms.

\begin{proposition}\label{poinc}\cite{JCMP} 
	Let $\displaystyle (\Omega_j)_{j=1,\dots, N}$ be a finite union of coordinate sets of $\widehat X$ covering $E$, and let $\widehat U$ be any open subset contained in their union and $U:=\pi(\widehat U)$. 	Let $\tau$ be a $(p, 0)$-form with compact support 
	in a set $U\setminus \pi(E)\subset X$ and values in $(L, h_L)$. Then we have 
	\begin{equation}\label{eq57}
	\frac{1}{C}\int_U|\tau|_{\om_E}^2e^{-\phi_L}dV_{\omega_E}\leq \int_U|\dbar\tau|^2_{\omega_E}e^{-\phi_L}dV_{\omega_E}
	\end{equation}
	where $C$ is a positive numerical constant.
\end{proposition}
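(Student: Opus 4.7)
The strategy follows that of \cite{JCMP}, where the analogous inequality was established in the case $\Theta_{h_L}(L)=0$; the main adaptation is to incorporate the weight $e^{-\varphi_L}$ arising from the analytic singularities of $h_L$.

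Using a partition of unity subordinate to $(\Omega_j)_{j=1,\ldots,N}$, we reduce to showing the inequality when $\tau$ is compactly supported in a single chart $\Omega$ of $\widehat X$ with coordinates $(z_1,\ldots,z_n)$ in which $E\cap \Omega=(z_1\cdots z_m=0)$ (writing $m$ for the number of cusp directions, to distinguish it from the form degree $p$) and $\varphi_L=\sum_{\alpha=1}^m e_\alpha\log|z_\alpha|^2+\varphi_{\mathrm{sm}}$ with $e_\alpha\geq 0$ and $\varphi_{\mathrm{sm}}$ smooth and bounded; the latter is absorbed into $C$. Writing $\tau=\sum_{|I|=p}\tau_I\,dz_I$ and using the asymptotic expansion \eqref{coeff poin poin} of the Poincar\'e metric coefficients, both sides of the sought inequality decompose as sums of local integrals of $|\tau_I|^2$ (resp.\ $|\partial_{\bar\beta}\tau_I|^2$) against weights of the form $\prod_\alpha|z_\alpha|^{-2e_\alpha}$ times powers of $|z_\alpha|^{\pm 2}\log^{\pm 2}|z_\alpha|^2$.

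The core estimate is a one-variable weighted Hardy-type inequality: for $g\in C^\infty_c(\DD^*)$ and $e\geq 0$,
\begin{equation*}
\int \frac{|g|^2\,|z|^{-2e}}{|z|^2\log^2|z|^2}\,idz\wedge d\bar z \;\leq\; C_e\int|\partial_{\bar z}g|^2\,|z|^{-2e}\,idz\wedge d\bar z,
\end{equation*}
with $C_e$ depending only on $e$. One route is to substitute $g=|z|^e h$, so that the inequality becomes $\int|h|^2/(|z|^2\log^2|z|^2)\leq C_e\int|\partial_{\bar z}h+\tfrac{e h}{2\bar z}|^2$; one then expands the square, controls the resulting cross term by Cauchy--Schwarz, and concludes by means of the unweighted Hardy inequality on the punctured disk. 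Applying this estimate coordinate-by-coordinate via Fubini \-- choosing, for each multi-index $I$, a cusp direction $\alpha\in\{1,\ldots,m\}\setminus I$ when available, and in the remaining cases where $\{1,\ldots,m\}\subset I$ invoking the standard Euclidean Poincar\'e inequality on $\Omega$ for the compactly supported $\tau_I$ \-- yields the desired inequality with a constant $C$ depending only on $L$, $h_L$, and $\omega_E$.

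The principal obstacle, relative to the flat case treated in \cite{JCMP}, is the weighted Hardy inequality above: unlike for $e=0$, the constant $C_e$ is no longer absolute, and the first-order correction $\tfrac{e h}{2\bar z}$ introduced by the substitution must be handled with care. However, since the $e_\alpha$ are fixed parameters determined by the metric $h_L$, the final constant $C$ remains independent of $\tau$, as required.
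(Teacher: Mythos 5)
Your plan is the natural one (reduce to a single chart, then prove a weighted one--variable Hardy inequality and apply Fubini direction by direction), and the one--dimensional estimate you write down is essentially correct. But the sketch has two genuine gaps.

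\textbf{Localization.} Reducing to a single chart via a partition of unity $(\chi_j)$ is not a purely formal step for a Poincar\'e--type inequality. Writing $\tau_j=\chi_j\tau$, the one--chart estimate applied to each $\tau_j$ produces $\int|\tau|^2 \lesssim \sum_j\int|\dbar(\chi_j\tau)|^2 \lesssim \int|\dbar\tau|^2 + \bigl(\sup\sum_j|\dbar\chi_j|_{\omega_E}^2\bigr)\int|\tau|^2$, and the second term must be absorbed into the left--hand side. This is only possible because $|\dbar\chi_j|_{\om_E}$ is \emph{small} near $E$ \-- the gradient of a function smooth on $\widehat X$ decays like $|z_i|\log^2|z_i|^2$ in the Poincar\'e metric \-- so the cutoffs must be chosen with some care (or the tubular neighbourhood of $E$ taken thin enough), and the argument has to say so explicitly. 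Your write--up treats the partition of unity as a cost--free reduction, which is not the case here.

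\textbf{The case where no ``free'' cusp direction remains.} You propose to handle multi--indices $I$ with $\{1,\dots,m\}\subset I$ by ``the standard Euclidean Poincar\'e inequality''. That only works when there is a \emph{non--cusp} direction $\beta\notin I$ to differentiate in. In the extreme case $p=n$ with all directions cusps (and this case \emph{is} used in the paper, via the Corollary on $(n,0)$--forms following Proposition~\ref{cor:2}), every $\beta$ lies in $I$ and is a cusp direction, and the inverse--metric factor $g^{\bar\beta\beta}\sim|z_\beta|^2\log^2|z_\beta|^2$ then sits on the right--hand side. What one must prove is an inequality of the form
\begin{equation*}
\int_{\DD^\ast}|g|^2\,|z|^{-2e}\,d\lambda \;\le\; C\int_{\DD^\ast}|\partial_{\bar z}g|^2\,|z|^{2-2e}\log^2|z|^2\,d\lambda ,
\end{equation*}
which is \emph{not} the Euclidean Poincar\'e inequality and does not follow from the Hardy estimate you wrote down; it requires a separate (dual Hardy/integration--by--parts) argument. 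It does hold for compactly supported $g$, but your text does not supply it. Finally, a small remark in the other direction: the constant $C_e$ in your Hardy inequality can in fact be taken \emph{uniform} in $e\ge 0$, since for any $e$ the term $\int|\partial_{\bar z}h|^2$ alone (after the substitution $g=|z|^e h$ and dropping the nonnegative cross contribution, whose integral vanishes for compactly supported $h$) already controls $\int|h|^2/(|z|^2\log^2|z|^2)$ by the $e=0$ estimate \-- so the worry you raise at the end is not actually an obstacle.

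For the record, the paper itself does not prove this proposition but cites \cite{JCMP}, so there is no ``in--house'' argument to compare against; the gaps above are relative to making your sketch into a complete proof.
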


\noindent We emphasize that the constant $C$ in \eqref{eq57} only depends on the distortion between the model Poincar\'e metric on $\Omega_j$ with singularities on $E$ and 
the global metric $\widehat \omega_E$ restricted to $\Omega_j$.
Another important observation is that by using the cut-off function $\mu_\ep$ in Lemma \ref{cutoff}, we infer that \eqref{eq57} holds in fact for any $L^2$-bounded form with compact support in $U$. 

\medskip

%\noindent Before explaining the arguments of the proof, we note that the hypothesis (b)
%is a bit more general then what one usually requires in this context, i.e. that the curvature 
% form vanishes.

$\bullet$ {\it Quick recap around the Bochner-Kodaira-Nakano formula.} 

\noindent
We recall the following formula, which is central in complex differential geometry
\begin{equation}
\label{bochner}
\Delta''=\Delta'+[i\Theta_{h_L} (L), \Lambda_{\omega_E}]
\end{equation}
where $\Delta''= \dbar \dbar^*+\dbar^*\dbar$ and $\Delta'=D'D'^*+D'^*D'$ where $D'$ is the $(1,0)$-part of the Chern connection on $(L,h_L)$. Let us also recall the well-known fact that the self-adjoint operator 
\[A:=[i\Theta_{h_L} (L), \Lambda_{\omega_E}]\] is semi-positive when acting on $(n,q)$ forms, for any $0\le q \le n$ as long as $i\Theta_{h_L}(L) \ge 0$. 
An immediate consequence of \eqref{bochner} is that for a $L^2$-integrable form $u$ with values in $L$ of any type in the domains of $\Delta'$ and $\Delta''$, we have
\begin{equation}
\label{bochner2}
\|\dbar u\|^2_{L^2}+\|\dbar^*u\|^2_{L^2}=\|D' u\|^2_{L^2}+\|D'^*u\|^2_{L^2}+\int_{X^\circ}\langle Au,u\rangle dV_{\omega_E}
\end{equation}
where $\|\cdot\|_{L^2}$ (resp. $\langle, \rangle$) denotes the $L^2$-norm (resp. pointwise hermitian product) taken with respect to $(h_L, \om_E)$. 
%From this, one easily deduces that 
%\begin{equation}
%\label{bochner3}
%\int_{X^\circ}\langle A\star u,\star u\rangle dV_{\omega_E} = -\int_{X^\circ}\langle Au,u\rangle dV_{\omega_E}
%\end{equation}
Let $\star:\Lambda^{p,q}T_{X^\circ}^*\to \Lambda^{n-q,n-p}T_{X^\circ}^*$ be the Hodge star with respect to $\om_E$; we introduce for any integer $0\le p \le n$ the space
\begin{equation}
\label{Hp}
H^{(p)}:=\{F\in  H^0(X^\circ, \Omega_{X^\circ}^{p}\otimes L)\cap L^2;  \,\, \int_{X^\circ}\langle A\star F, \star F\rangle dV_{\om_E}=0\}
\end{equation}
and we can observe by Bochner formula that for a $L^2$ integrable, $L$-valued $(p,0)$-form $F$, one has
\begin{equation}
\label{hp}
 \Delta''(\star F)=0 \Longleftrightarrow \Delta'(\star F)=0 \text{ and } \int_{X^\circ}\langle A\star F, \star F\rangle dV_{\om_E}=0 \Longleftrightarrow F\in H^{(p)}  .
\end{equation}
\medskip

\noindent The proof of Theorem~\ref{Hodge}, which we give below, makes use of the
following proposition which is the $\dbar$-version of the Poincar\'e inequality established in \cite{Auv17}. 

\begin{proposition}\label{cor:2}
	Let $p\leq n$ be an integer.  There exists a positive constant $C> 0$ such
	that the following inequality holds
	\begin{equation}\label{eq3233}
	\int_{X^\circ}|u|^2_{\omega_E}e^{-\phi}dV_{\omega_E}\leq C \left(\int_{X^\circ}|\dbar u|^2_{\omega_E}e^{-\phi}dV_{\omega_E} +
	\int_{X^\circ} \langle A \star u , \star u \rangle d V_{\omega_E}\right)
	\end{equation}
	for any $L$-valued form $u$ of type $(p, 0)$ which belongs to the domain of $\dbar$ and which is orthogonal to the  space $H^{(p)}$ defined by \eqref{Hp}. 	Here $\star$ is the Hodge star operator with respect to the metric $\omega_E$.
\end{proposition}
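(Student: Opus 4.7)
The plan is to argue by contradiction. Assuming the inequality fails, a rescaling yields a sequence $(u_k)$ of $L$-valued $L^2$ $(p,0)$-forms, each orthogonal to $H^{(p)}$, with $\|u_k\|_{L^2}=1$ and
\begin{equation*}
\|\dbar u_k\|_{L^2}^2 + \int_{X^\circ}\langle A\star u_k,\star u_k\rangle\, dV_{\om_E}\longrightarrow 0.
\end{equation*}
The goal becomes showing that any such sequence must converge strongly to zero in $L^2$, contradicting $\|u_k\|_{L^2}=1$.

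\textbf{Identifying the weak limit.} After extraction, $u_k$ converges weakly to some $u_\infty$ in $L^2$. Since $\dbar u_k\to 0$ in $L^2$ and $\dbar$ is continuous as a distributional operator, $u_\infty$ is a $\dbar$-closed $L^2$ $(p,0)$-form, hence a holomorphic section by elliptic regularity on the smooth manifold $X^\circ$. As $A$ is non-negative on $(n,n-p)$-forms, the quadratic form $F\mapsto \int_{X^\circ}\langle A\star F,\star F\rangle\, dV_{\om_E}$ is non-negative and weakly $L^2$-lower semicontinuous, so
\begin{equation*}
\int_{X^\circ}\langle A\star u_\infty,\star u_\infty\rangle\, dV_{\om_E}\leq \liminf_k \int_{X^\circ}\langle A\star u_k,\star u_k\rangle\, dV_{\om_E}=0,
\end{equation*}
which places $u_\infty$ in $H^{(p)}$. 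Weak convergence also preserves orthogonality to $H^{(p)}$, so $u_\infty\perp H^{(p)}$ as well, forcing $u_\infty=0$.

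\textbf{Upgrading to strong convergence.} Pick a cut-off $\chi=\mu_\ep$ from Lemma \ref{cutoff}, with compact support $K\subset X^\circ$. On any compact subset of $X^\circ$ the data $(\om_E,\phi_L)$ are smooth and $\dbar$ is uniformly elliptic; combining classical compactness of the local Bergman space with the solution of a local $\dbar$-equation to absorb $\dbar u_k$, one sees that $u_k\to 0$ strongly in $L^2_{\mathrm{loc}}(X^\circ)$. In particular $\|\chi u_k\|_{L^2}\to 0$. For the complementary piece $(1-\chi)u_k$, supported in a neighborhood $U$ of $E$, Proposition \ref{poinc} (in the extended form valid for $L^2$ forms compactly supported in $U$, cf. the remark following its statement) yields
\begin{equation*}
\|(1-\chi)u_k\|_{L^2}^2 \leq C\|\dbar((1-\chi)u_k)\|_{L^2}^2 \leq 2C\|\dbar u_k\|_{L^2}^2 + 2C\|\dbar\chi\wedge u_k\|_{L^2}^2.
\end{equation*}
The first term tends to zero by assumption, while the second is bounded by $\sup|\dbar\chi|_{\om_E}^2\cdot\|u_k\|_{L^2(K)}^2\to 0$ since $|\dbar\chi|_{\om_E}$ is uniformly bounded (Lemma \ref{cutoff}) and supported in $K$. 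Summing both contributions yields $\|u_k\|_{L^2}\to 0$, the desired contradiction.

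The most delicate point of the plan is the legitimate application of Proposition \ref{poinc} to the non-compactly supported form $(1-\chi)u_k$: it relies on the approximation provided by the cut-offs $\mu_\ep$ and on the fact that the constant $C$ in that proposition depends solely on the geometry of $\om_E$ near $E$, hence is uniform in $k$. A secondary subtle point is confirming $u_\infty\in H^{(p)}$ through weak lower semicontinuity of the curvature integral, which uses essentially that $A\geq 0$ on $(n,n-p)$-forms.
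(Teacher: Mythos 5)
Your proof is correct and follows the same overall strategy as the paper: contradiction, normalization, weak limit vanishing because it lies in $H^{(p)}$ while also being orthogonal to it, upgrading to local $L^2$ convergence via local $\dbar$-solvability (the paper uses small Stein neighborhoods, you invoke local ellipticity and Bergman compactness — same content), and then the cut-off decomposition $u_k = \chi u_k + (1-\chi)u_k$ with Proposition~\ref{poinc} applied to the part supported near $E$. The paper defers the last step to Auvray's Lemma~1.10 and opts to shrink the neighborhood of $E$ so that the cut-off has small gradient, whereas you instead absorb the gradient term $\dbar\chi \wedge u_k$ into the compact set where $u_k \to 0$ locally in $L^2$; both versions close the argument, and yours is marginally more self-contained. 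Your flagging of the two delicate points (the extended validity of Proposition~\ref{poinc} for $L^2$ forms supported in a neighborhood of $E$, and the weak lower semicontinuity of the curvature integral via $A \geq 0$ on $(n,n-p)$-forms) identifies exactly the two places where the argument relies on the structure of the Poincaré-type setup rather than pure soft analysis.
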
  

\begin{proof}[Proof of Proposition~\ref{cor:2}]
If a positive constant as in \eqref{eq3233} does not exists, then we obtain a sequence $u_j$ of 
$L$-valued forms of type $(p,0)$ orthogonal to $H^{(p)}$ such that 
\begin{equation}
\int_{X^\circ}|u_j|^2_{\omega_E}e^{-\phi}dV_{\omega_E}= 1, \qquad
\lim_j\int_{X^\circ}|\dbar u_j|^2_{\omega_E}e^{-\phi_L}dV_{\omega_E}= 0
\end{equation}
and 
$$\lim_j \int_{X^\circ} \langle A \star u_j , \star u_j \rangle   d V_{\omega_E} =0.$$
It follows that the weak limit $u_\infty$ of $(u_j)$ is holomorphic and belongs to $H^{(p)}$. 
On the other hand, each $u_j$ is perpendicular to $H^{(p)}$, so it follows
that $u_\infty$ is equal to zero.

Let us first show that the weak convergence $u_i\rightharpoonup u_\infty$ also takes places in $L^2_{\rm loc}(X^\circ)$. To that purpose, let us pick a small Stein open subset $U \Subset X^\circ$. By solving the $\dbar$-equation $U$, we can find $w_j$ such that $\dbar w_j = \dbar u_j$ on $U$ and $\int_U |w_j|^2 \rightarrow 0$. 
 Therefore $u_j -w_j$ is holomorphic on $U$ and converges weakly, hence strongly to $u_\infty |_U$. In particular $u_j$ converges to $u_\infty$ in $L^2$ on $U$. As $u_\infty =0$, we have 
	\begin{equation}\label{eq611}
	u_j |_K\to 0
	\end{equation}
	in $L^2$ for any compact subset $K\subset X^\circ$.
	
	The last step in the proof is to notice that the considerations above contradict the fact that the $L^2$ norm of each $u_j$ is equal to one. This is not quite immediate, but is precisely as the end of the proof of Lemma 1.10 in
	\cite{Auv17}, so we will not reproduce it here. The idea is however very clear: 
	in the notation of Proposition~\ref{poinc}, we choose $V$ small enough so that it admits a cut-off function $\chi$ with small gradient with respect to $\om_E$. Then, we decompose each $u_j$ as $u_j= \chi u_j+ (1-\chi)u_j$. Then the $L^2$ norm of $\chi u_j$ is small
	by \eqref{eq611}. The $L^2$ norm of $(1-\chi)u_j$ is equally small 
	by \eqref{eq57}, and this is how we reach a contradiction.
\end{proof}

\noindent We have the following direct consequences of Proposition \ref{cor:2}.

\begin{cor}
	There exists a positive constant $C> 0$ such
	that the following inequality holds
	\begin{equation}\label{eq32334}
	\int_{X^\circ}|u|^2_{\omega_E}e^{-\phi}dV_{\omega_E}\leq C \left(\int_{X^\circ}|\dbar u|^2_{\omega_E}e^{-\phi}dV_{\omega_E}\right)
	\end{equation}
	for any $L$-valued form $u$ of type $(n, 0)$ which belongs to the domain of $\dbar$ and which is orthogonal to the kernel of $\dbar$. 
	%Here $\star$ is the Hodge star operator with respect to the metric $\omega_E$.
\end{cor}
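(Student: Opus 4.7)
The plan is to deduce this corollary directly from Proposition~\ref{cor:2} applied with $p=n$, after observing two key simplifications that occur when the form is of top holomorphic degree.

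First I would verify that the curvature term on the right-hand side of \eqref{eq3233} vanishes identically when $u$ has bidegree $(n,0)$. On the complex $n$-dimensional manifold $X^\circ$, the Hodge star operator sends $(n,0)$-forms to $(n,0)$-forms. For any $(n,0)$-form $\xi$, both pieces of the commutator $A=[i\Theta_{h_L}(L),\Lambda_{\om_E}]$ vanish pointwise: $\Lambda_{\om_E}\xi$ sits in bidegree $(n-1,-1)=0$, and $i\Theta_{h_L}(L)\wedge \xi$ sits in bidegree $(n+1,1)=0$. Consequently $A\star u=0$ and the curvature integral on the right of \eqref{eq3233} is identically zero.

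Next I would identify the space $H^{(n)}$ appearing in Proposition~\ref{cor:2} with the $L^2$-kernel of $\dbar$ on $(n,0)$-forms. By the previous computation, the integral condition $\int_{X^\circ}\langle A\star F,\star F\rangle\, dV_{\om_E}=0$ in the definition \eqref{Hp} is automatically satisfied for any $L$-valued $(n,0)$-form, so $H^{(n)}$ reduces to the space of $L^2$-integrable holomorphic $L$-valued $(n,0)$-forms. On the other hand, an $L^2$-integrable $(n,0)$-form in the domain of $\dbar$ lies in $\ker\dbar$ if and only if it is holomorphic, which gives $\ker\dbar\cap L^2_{n,0}(X^\circ,L)=H^{(n)}$. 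Thus the orthogonality hypothesis of the corollary coincides exactly with the orthogonality hypothesis of Proposition~\ref{cor:2} for $p=n$.

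Combining these two observations, a direct application of Proposition~\ref{cor:2} with $p=n$ yields the desired inequality \eqref{eq32334} with the same constant $C$. There is essentially no real obstacle in this deduction: the whole point is that the bidegree constraint specific to top holomorphic forms on an $n$-dimensional manifold kills the curvature contribution and forces the two relevant orthogonality conditions to agree, so the corollary is a clean specialization of Proposition~\ref{cor:2}.
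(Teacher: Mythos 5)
Your argument is correct and is essentially the paper's own proof, which simply invokes Proposition~\ref{cor:2} together with the observation that $A$ vanishes in bidegree $(n,0)$. You spell out the two bidegree computations showing $A\star u=0$ and the resulting identification $H^{(n)}=\ker\dbar\cap L^2_{n,0}$, but this is just an unpacking of the same idea, not a different route.
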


\begin{proof} This follows immediately from Proposition \ref{cor:2} combined with the observation that the
  curvature operator $A$ is equal to zero in bi-degree $(n,0)$.
\end{proof}

\noindent The next statement shows that in bi-degree $(n, 2)$ the image of the operator $\dbar^\star$ is closed.

\begin{cor} There exists a positive constant $C> 0$ such that the following holds true.
	Let $v$ be a $L$-valued form of type $(n,2)$. We assume that $v$ is $L^2$, in the domain of $\dbar$ and orthogonal 
	to the kernel of the operator $\dbar^\star$. Then we have
	\begin{equation}\label{eq62}
	\int_{X^\circ}|v|^2_{\omega_E}e^{-\phi_L}dV_{\omega_E}\leq C \int_{X^\circ}|\dbar^\star v|^2_{\omega_E}e^{-\phi_L}dV_{\omega_E}.
	\end{equation}
\end{cor}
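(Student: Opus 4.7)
The plan is to derive \eqref{eq62} as an essentially formal consequence of Theorem~\ref{Hodge}, extracting closedness of the range of $\bar\partial^\star$ at bi-degree $(n,2) \to (n,1)$ and then invoking the closed-range/open-mapping theorem in Hilbert space.

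First, I would read off from the Hodge decomposition
\[
L^2_{n,1}(X^\circ, L) = \mathcal H_{n,1}(X^\circ, L) \oplus \mathrm{Im}\,\bar\partial \oplus \mathrm{Im}\,\bar\partial^\star
\]
provided by Theorem~\ref{Hodge} that the summand $\mathrm{Im}\,\bar\partial^\star$---i.e.\ the range of $\bar\partial^\star$ regarded as a densely defined unbounded operator from $L^2_{n,2}(X^\circ, L)$ into $L^2_{n,1}(X^\circ, L)$---is a closed subspace of $L^2_{n,1}(X^\circ, L)$. So this instance of $\bar\partial^\star$ has closed range between these two Hilbert spaces.

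Next, since $\bar\partial^\star$ is itself a closed operator (being the Hilbert space adjoint of the densely defined $\bar\partial$), the induced bijection
\[
\bar\partial^\star \colon \mathrm{dom}(\bar\partial^\star) \cap (\ker \bar\partial^\star)^{\perp} \longrightarrow \mathrm{Im}\,\bar\partial^\star
\]
is a continuous map onto a Banach space, and the open mapping theorem produces a bounded inverse. The operator norm of that inverse is exactly the constant $C$ of \eqref{eq62}: one obtains $\|v\|_{L^2} \leq C\,\|\bar\partial^\star v\|_{L^2}$ for every $v \in \mathrm{dom}(\bar\partial^\star) \cap (\ker \bar\partial^\star)^\perp$, and if $v \notin \mathrm{dom}(\bar\partial^\star)$ then the right-hand side of \eqref{eq62} is $+\infty$ and there is nothing to prove.

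There is no genuine obstacle here; all the analytic content is already in Theorem~\ref{Hodge}, and the remainder is textbook functional analysis. An alternative, more hands-on route would mimic the contradiction scheme used in Proposition~\ref{cor:2}: extract a sequence $v_j$ with $\|v_j\|_{L^2}=1$, $v_j \perp \ker \bar\partial^\star$ and $\|\bar\partial^\star v_j\|_{L^2} \to 0$, whose weak limit must lie in $\ker \bar\partial^\star \cap (\ker \bar\partial^\star)^\perp = \{0\}$, then derive a contradiction with the normalization by means of the cut-off functions of Lemma~\ref{cutoff} together with Proposition~\ref{poinc}. But this merely re-enacts the closed range theorem in the present geometric setting and is not really necessary once Theorem~\ref{Hodge} is at our disposal.
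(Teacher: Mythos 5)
Your argument is circular in the paper's logical ordering, and this is a genuine gap rather than a stylistic difference. This corollary is stated and proved \emph{before} Theorem~\ref{Hodge}, and its entire purpose in the paper is to supply one of the two closed-range facts needed in the proof of Theorem~\ref{Hodge}: the paper's proof of that theorem says explicitly that it reduces to the abstract decomposition with closures $\overline{\Imm\dbar}$ and $\overline{\Imm\dbar^\star}$, and then invokes \eqref{eq32334} and \eqref{eq62} to remove the closures. So deducing \eqref{eq62} from the exact Hodge decomposition of Theorem~\ref{Hodge}, as you propose, assumes precisely what \eqref{eq62} is meant to establish. The functional analysis you quote (closed range of a densely defined closed operator is equivalent to a Poincaré-type estimate on the orthogonal complement of the kernel) is correct, but you may not appeal to the conclusion of Theorem~\ref{Hodge} here.

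The alternative contradiction scheme you gesture at also does not quite work as sketched. Proposition~\ref{poinc} concerns $\dbar$ acting on $(p,0)$-forms, not $\dbar^\star$ acting on $(n,2)$-forms, so a sequence $v_j$ of $(n,2)$-forms with $\|\dbar^\star v_j\|\to 0$ is not directly in the scope of that inequality. The missing ingredient is the Hodge-star reduction used in the paper: set $u:=\star v$ of type $(n-2,0)$, observe that $v\perp\ker\dbar^\star$ forces $\dbar v=0$ and that $u$ is orthogonal to the space $H^{(n-2)}$ (because $\star F\in\ker\dbar^\star$ for $F\in H^{(n-2)}$), then apply the Bochner identity \eqref{bochner2} to $v$, using $\dbar^\star u=0$ by degree, to obtain $\|\dbar^\star v\|^2=\|\dbar u\|^2+\int\langle A\star u,\star u\rangle$. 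At this point the inequality of Proposition~\ref{cor:2} (which is proved independently of Theorem~\ref{Hodge}) applies to $u$ and yields \eqref{eq62}. Your proposal is missing both this reduction and the curvature term coming from Bochner; without them the argument either presupposes the theorem it feeds into or invokes a Poincaré inequality in the wrong bidegree.
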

\smallskip

\begin{proof}
Let us first observe that the Hodge star $u:= \star v$, of type $(n-2,0)$, is orthogonal to $H^{(n-2)}$. This can be seen as follows. Let us pick $F\in H^{(n-2)}$; it follows from \eqref{hp} that we have $\displaystyle \dbar^\star(\star F)= 0$. In other words, $\star F \in \ker \dbar^\star$.
We thus have
$$\int_{X^\circ} \langle u, F\rangle   d V_{\omega_E} = \int_{X^\circ} \langle v, \star F \rangle  d V_{\omega_E} =0.$$
Applying Bochner formula \eqref{bochner2} to $v$ and using the facts that $\dbar  v=0$ (since $v$ is orthogonal to $\ker \dbar^*$) and that $\dbar^*u=0$ for degree reasons, we get
\begin{equation}\label{eq63}
\|\dbar^*v\|^2_{L^2}=\|\dbar u\|^2_{L^2}+\int_{X^\circ}\langle A \star u, \star u\rangle dV_{\omega_E}
\end{equation}
This proves the corollary by applying Proposition \ref{cor:2}.
\end{proof}
\medskip

\noindent We discuss next the relative version of the previous estimates. Let $p: \cX\to D$ and $(L, h_L)$ be the family of manifolds and the line bundle, respectively fixed in the previous section. We assume that 
\begin{equation}\label{e3}
D\ni t\mapsto \dim\big(\ker(\Delta_t'')\big) \quad \mbox{is constant}
\end{equation}
where the Laplace operator $\Delta_t''$ is the one acting on $L^2$ $(n,1)$-forms with respect to $(\omega_E, h_L)$.
\smallskip

\noindent The next result is a consequence of the proof of Proposition \ref{cor:2}.

\begin{cor}\label{unifconst}
	Under the additional assumptions \eqref{e3} and {\rm \ref{A1}}, there exists a constant $C> 0$ independent of $t$ such that 
	\begin{equation}\label{uniconsts}
\int_{X_t ^\circ}|u|^2_{\omega_E}e^{-\phi}dV_{\omega_E}\leq C \left(\int_{X_t ^\circ}|\dbar u|^2_{\omega_E}e^{-\phi}dV_{\omega_E} +
\int_{X_t ^\circ} \langle A \star u , \star u \rangle d V_{\omega_E}\right)
	\end{equation}
	for all $L^2$ forms $u$ orthogonal to the space $H^{(p)} _t$ defined in \eqref{Hp} on the fiber $X_t$.
\end{cor}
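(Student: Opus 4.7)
The plan is to mimic the contradiction argument from the proof of Proposition~\ref{cor:2}, while keeping uniform control in the parameter $t \in D$. Assume the corollary fails: after passing to a subsequence, there exist $t_j \to t_\infty$ in $D$ together with $L$-valued $(p,0)$-forms $u_j$ on $X_{t_j}^\circ$ that are orthogonal to $H^{(p)}_{t_j}$, satisfy $\|u_j\|_{L^2(X_{t_j}^\circ)} = 1$, and obey
\begin{equation*}
\int_{X_{t_j}^\circ} |\dbar u_j|^2_{\om_E}\, e^{-\phi_L}\, dV_{\om_E}+\int_{X_{t_j}^\circ}\langle A\star u_j, \star u_j\rangle\, dV_{\om_E} \longrightarrow 0.
\end{equation*}
Using the smoothness of $p:\cX\to D$ combined with the transversality \ref{A1}, fix a smooth family of diffeomorphisms $\Phi_t: X_{t_\infty} \to X_t$ preserving $E\cap X_t$ and respecting the local coordinates from \ref{A1}. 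Pulling back via $\Phi_{t_j}$, we view each $u_j$ as a form on the fixed punctured manifold $X_{t_\infty}^\circ$: the metric $\om_E|_{X_t}$, the weight $\phi_L|_{X_t}$ and the operator $A_t$ depend smoothly on $t$ over compact subsets of $X_{t_\infty}^\circ$, while the asymptotic estimates from Section~\ref{flit} near $E$ are uniform in $t$.

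The uniform $L^2$-bound produces a weak $L^2_{\mathrm{loc}}$-limit $u_\infty$ on $X_{t_\infty}^\circ$. Exactly as in the proof of Proposition~\ref{cor:2}, local solvability of the $\dbar$-equation on small Stein open subsets combined with $\dbar u_j \to 0$ upgrades this to strong $L^2_{\mathrm{loc}}$-convergence. Lower semicontinuity of the $A$-integrand together with the $\dbar$-closedness of the limit forces $u_\infty \in H^{(p)}_{t_\infty}$. To close the argument one needs $u_\infty \perp H^{(p)}_{t_\infty}$, hence $u_\infty = 0$. This is where hypothesis~\eqref{e3} enters: via the Hodge-star duality \eqref{hp}, constancy of $\dim \mathrm{Ker}\,\Delta''_t$ in bi-degree $(n,1)$ translates into constancy of $\dim H^{(p)}_t$ (for $p=n-1$, which is the relevant case). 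Combined with the ellipticity of the fiberwise $\Delta''_t$ and the smoothness of $p$, this yields a continuous orthonormal frame $(F^{(k)}_t)_k$ of $H^{(p)}_t$ near $t_\infty$, so that the orthogonality relations $\langle u_j, F^{(k)}_{t_j}\rangle = 0$ pass to the limit.

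Given $u_\infty = 0$ and the strong $L^2_{\mathrm{loc}}$-convergence, the last step of the proof of Proposition~\ref{cor:2} applies with only cosmetic changes: decompose $u_j = \chi u_j + (1-\chi)u_j$ for a cutoff $\chi$ supported in a compact $K\Subset X_{t_\infty}^\circ$ with controlled $\om_E$-gradient; strong convergence handles $\chi u_j$, while the Poincaré inequality from Proposition~\ref{poinc} controls the $L^2$-norm of $(1-\chi)u_j$, contradicting $\|u_j\|_{L^2}=1$. \emph{The main obstacle} lies in producing the continuous frame of $H^{(p)}_t$: one must combine \eqref{e3}, the Hodge-star duality, and fiberwise elliptic regularity to rule out jumps in $H^{(p)}_t$ at $t_\infty$ and to carry out a continuous orthonormalisation despite the non-compactness of the fibers $X_t^\circ$ and the singular nature of $(L,h_L)$.
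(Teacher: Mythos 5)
Your proposal follows the same contradiction scheme as the paper: extract a normalised sequence $u_j\perp H^{(p)}_{t_j}$ with vanishing right-hand side, show a weak $L^2_{\rm loc}$ limit $u_\infty$ exists and lies in $H^{(p)}_{t_\infty}$, and then derive a contradiction once one knows $u_\infty\perp H^{(p)}_{t_\infty}$, using the local $\dbar$ solvability and the Poincar\'e inequality of Proposition~\ref{poinc} exactly as in the proof of Proposition~\ref{cor:2}. The smooth trivialisation $\Phi_t$ is a cosmetic device; the paper simply works with the family directly, using the transversality~\ref{A1}.

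The genuine issue is the step you yourself flag as the \emph{main obstacle}: you need the orthogonality $u_j\perp H^{(p)}_{t_j}$ to pass to the limit, i.e.\ ``no jump'' of the spaces $H^{(p)}_t$ at $t_\infty$. Your proposal asserts that a continuous orthonormal frame for $H^{(p)}_t$ exists and refers to \eqref{e3}, Hodge duality and fiberwise ellipticity, but it never explains why the fibers cannot jump; standard perturbation theory for the resolvent of $\Delta''_t$ is not directly available here because $\om_E$ and $h_L$ are singular and $X^\circ_t$ is non-compact, which is exactly why the paper spends half of its proof on this point. The paper's mechanism is concrete: take an orthonormal basis $(F_{t,j})$ of $H^{(p)}_t$ (equivalently $\star_t$ of a $\ker\Delta''_t$-basis, using \eqref{hp}), observe that the $L^2$ bound with respect to $\om_E$ implies a local $\sup$ bound on the holomorphic coefficients, and show the weak limits $F_{\infty,j}$ cannot vanish --- because if they did, the sup norm of the coefficients would tend to $0$ while the Poincar\'e volume is finite, contradicting the normalisation $\|F_{t,j}\|_{L^2}=1$. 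Combined with the dimension constancy from \eqref{e3}, the extracted limits form a full orthonormal basis of $H^{(p)}_0$, and orthogonality then passes to the limit. Without that non-vanishing argument your proof has a gap precisely at the step on which the whole corollary turns; everything else in the proposal is sound and tracks the paper's route.
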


Here the constant $C$ is uniform in the sense that for any subset $U$ of compact support in $D$, we can find a constant $C$ depending on $U$ such that \eqref{uniconsts} is satisfied for any $t\in U$

\begin{proof} We first show that every form $F_0$ on the central fiber which is in the space $H^{(p)} _0$ can be written as limit of $\displaystyle F_{t_i} \in H^{(p)} _{t_i}$. This is of course well-known in the compact case, but we include a proof here since we could not find a reference fitting in our context.
	
	\noindent Let $\displaystyle (F_t)_{t\in D^\star}$ any family of $L$-valued holomorphic $p$-forms on the fibers above the pointed disk $D^\star$ such that 
	\begin{equation}\label{e1}
	\int_{X^\circ _t}|F_t|^2_{\om_E}e^{-\phi_L}dV_{\om_E}= 1.
	\end{equation}
	Then we can definitely extract a limit $F_\infty$ on the central fiber $X_0$, but in principle it could happen that $F_\infty\equiv 0$ is identically zero. Such assumption would lead however to a contradiction, as follows.
	
	\noindent We write locally on a coordinate chart $\Omega$ for $\cX$ 
	\begin{equation}\label{e2}
	F_t|_\Omega= \sum f_Idz_I\otimes e_L,
	\end{equation}
	where the coefficients $f_I$ are holomorphic, and of course depending on $t$. We can assume that the 
	multiplier ideal sheaf of $h_L$ is trivial, given the transversality conditions that we have imposed
	(we can simply divide $F_t$ with the corresponding sections). If the weak limit of $F_t$ is zero, we can certainly extract a limit in strong sense, because the $L^2$ norm with respect to a smooth metric is smaller than the $L^2$ norm with respect to Poincar\'e metric, cf. also Remark~\ref{extension}.
	
	In this case, the $\sup$ norm of the coefficients $f_I$ above converges to zero as $t\to 0$. Since the Poincar\'e metric we are using has uniformly bounded volume, the equality \eqref{e1} will not be satisfied as soon as $t\ll 1$.
	\smallskip
	
	\noindent We now take an orthonormal basis $(F_{t, j})$ of the space $H_t^{(p)}$ (this is obtained by the 
	$\star_t$ of an orthonormal basis for the $\ker(\Delta''_t)$, for example). The previous considerations will allow us to construct by extraction an orthonormal family $(F_{\infty, j})$ in $H_0^{(p)}$; this will be a basis because of dimension considerations.
	\medskip
	
	\noindent We argue by contradiction and assume that the smallest constant $C_t$ for which \eqref{eq3233} holds true to for the fiber 
	$X_t$ tends to infinity when $t\to 0$. Then we get $u_i$ 
	on $\displaystyle X_{t_i}$ such that $u_i$ is orthogonal to the space $H_{t_i}^{(p)}$ and such that
	\begin{equation}\label{eq61}
	\int_{X_{t_i}}|u_i|^2_{\omega_E}e^{-\phi_L}dV_{\omega_E}= 1, \qquad
	\lim_i\int_{X_{t_i}}|\dbar u_i|^2_{\omega_E}e^{-\phi_L}dV_{\omega_E}= 0 .
	\end{equation}
	Then the limit 
	$$u_0 := \lim_{i\rightarrow 0} u_i$$ 
	is still orthogonal to $H_0^{(p)}$: this is exactly where the previous considerations are needed. 
	The rest of the proof of the corollary follows the arguments already given for 
	Proposition \ref{cor:2}, so we simply skip it.
\end{proof}

Now we can prove Theorem \ref{Hodge}.
\begin{proof}[Proof of Theorem~\ref{Hodge}]
	This statement is almost contained in \cite[chapter VIII, pages
	367-370]{bookJP}. Indeed, in the context of complete manifolds one has the
	following decomposition
	\begin{equation}\label{eq33}
	L^2_{n, 1}(X^\circ, L)= {\mathcal H}_{n,1}(X^\circ, L)\oplus \overline{\Imm \dbar}\oplus \overline{\Imm \dbar^\star}.    
	\end{equation}
	We also know (see \emph{loc. cit.}) that the adjoints $\dbar^\star$ and
	$D^{\prime\star}$ in the sense of von Neumann coincide with the formal
	adjoints of $\dbar$ and $D^\prime$ respectively.
	
	It remains to show that the range of the $\dbar$ and
        $\dbar^\star$-operators are closed with
	respect to the $L^2$ topology. In our set-up, this is a consequence of the
	particular shape of the metric $\omega_E$ at infinity (i.e. near the support
	of $\pi (E)$): we are simply using the inequalities \eqref{eq32334} and \eqref{eq62}. The former shows that the image of $\dbar$ is closed, and the latter does the same for $\dbar^\star$.
\end{proof}

\bigskip

We finish this section with the following result (relying of the decomposition theorem obtained above), identifying the $L^2$-integrable $\Delta''$-harmonic forms of bi-degree $(n,1)$ on $(X^\circ, \om_E, h_L)$ with the vector space $H^1(X,K_X\otimes L\otimes \mathcal I(h_L))$ \-- which is independent of $\om_E$. 

\begin{proposition}
\label{cohomology}
In the setting of Theorem~\ref{Hodge}, we have a natural isomorphism
\[\mathcal H_{n,1}(X^\circ, L)\overset{\simeq}{\longrightarrow} H^1(X,K_X\otimes L\otimes \mathcal I(h_L))\]
where $\mathcal H_{n,1}(X^\circ, L)$ is the space of $L^2$ integrable, $\Delta''$-harmonic $(n,1)$-forms on $X^\circ$. 
\end{proposition}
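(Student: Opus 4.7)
The plan is to realize both vector spaces as the first cohomology of a common fine resolution of the coherent sheaf $K_X \otimes L \otimes \mathcal I(h_L)$. On $X$ I would introduce the sheaf $\mathscr L^2_{n,q}$ whose sections over an open set $U \subset X$ are $L$-valued $(n,q)$-forms on $U \cap X^\circ$ whose coefficients and those of $\dbar$ are locally square-integrable with respect to $\om_E$ and $h_L$. Each $\mathscr L^2_{n,q}$ is fine, being a module over the sheaf of continuous functions on $X$, so partitions of unity are available. Moreover, the kernel of $\dbar : \mathscr L^2_{n,0} \to \mathscr L^2_{n,1}$ is identified with $K_X \otimes L \otimes \mathcal I(h_L)$: for a local generator $f \, dz_1 \wedge \cdots \wedge dz_n \otimes e_L$ the pointwise $\om_E$-norm of this top-type form cancels with the associated volume form, reducing the $L^2$ condition to $\int |f|^2 e^{-\phi_L} \, d\lambda < \infty$, which is the defining condition of the multiplier ideal.

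The main step is to establish that the complex $\mathscr L^2_{n,\bullet}$ is locally exact in positive degrees. Away from $E$ this is the usual $\dbar$-Poincar\'e lemma, $h_L$ being smooth there. Near a point of $E$ I would work on a small neighbourhood $U$ equipped with the model Poincar\'e metric and the weight \eqref{eq30}; since $\om_E |_U$ is complete K\"ahler, $i\Theta_{h_L}(L) \geq 0$, and $U \cap X^\circ$ carries a bounded plurisubharmonic exhaustion, H\"ormander's $L^2$ estimates yield the existence of $L^2$ solutions to $\dbar v = u$ whenever $u$ is a $\dbar$-closed $L$-valued $(n,q)$-form with $q \geq 1$ that is $L^2$ on $U \cap X^\circ$. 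This is precisely the local solvability statement used in the analogous result of \cite{JCMP} for $\Theta_{h_L}(L) = 0$, and the arguments apply verbatim since only semi-positivity of the curvature is needed. I expect this local $\dbar$-Poincar\'e lemma to be the main obstacle, as it relies both on the specific Poincar\'e-type geometry of $\om_E$ and on the analytic structure of the singularities of $h_L$.

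Granting the fine resolution, one obtains $H^q(X, K_X \otimes L \otimes \mathcal I(h_L)) = H^q\bigl(H^0(X, \mathscr L^2_{n,\bullet}), \dbar\bigr)$, and since $X$ is compact the global section complex coincides with the $L^2$-Dolbeault complex $(L^2_{n,\bullet}(X^\circ, L), \dbar)$. Finally, Theorem~\ref{Hodge} provides the orthogonal decomposition
\[
L^2_{n,1}(X^\circ, L) = \mathcal H_{n,1}(X^\circ, L) \oplus \Imm \dbar \oplus \Imm \dbar^\star
\]
with closed images, so $\ker \dbar \cap L^2_{n,1} = \mathcal H_{n,1}(X^\circ, L) \oplus \Imm \dbar$, and the orthogonal projection onto $\mathcal H_{n,1}(X^\circ, L)$ furnishes the desired natural isomorphism $\mathcal H_{n,1}(X^\circ, L) \xrightarrow{\ \simeq\ } H^1(X, K_X \otimes L \otimes \mathcal I(h_L))$.
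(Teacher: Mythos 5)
Your high-level strategy is reasonable — realize $K_X\otimes L\otimes\mathcal I(h_L)$ via a fine $L^2$-Dolbeault resolution, identify the kernel sheaf with the multiplier ideal (this part is correct, and the observation that the $\om_E$-norm of an $(n,0)$-form cancels against $dV_{\om_E}$ is exactly right), then pass to global sections and invoke Theorem~\ref{Hodge} to replace $\ker\dbar/\Imm\dbar$ by harmonic forms. But the crucial step, local exactness of $\mathscr L^2_{n,\bullet}$ at degree one, is not justified. You claim that near a point of $E$ one can solve $\dbar u = v$ with $L^2$ control because $U\cap X^\circ$ carries a bounded psh exhaustion and $i\Theta_{h_L}(L)\ge 0$. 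This fails for two related reasons. First, there is no bounded psh function on a punctured polydisc whose complex Hessian dominates the Poincar\'e metric $\om_E$: writing $\psi=g(t)$ with $t=-\log|z_1|^2$, boundedness together with convexity of $g$ forces $g''(t)=o(t^{-2})$, so $i\ddbar\psi$ degenerates relative to $\om_E$ near $E$; a Donnelly--Fefferman type estimate then only controls $\|u\|^2$ by $\int\langle(\ddbar\psi)^{-1}v,v\rangle$, which is not dominated by $\|v\|^2_{L^2(\om_E,h_L)}$. Second, even if one instead adds a strictly psh Euclidean weight $\delta|z|^2$ to $\phi_L$, the curvature operator $A=[i\Theta,\Lambda_{\om_E}]$ has eigenvalue $\sim\delta\,|z_j|^2\log^2|z_j|^2\to 0$ in the $z_j$-direction for $j\le p$, so H\"ormander's theorem requires $\int\langle A^{-1}v,v\rangle<\infty$, a condition that an arbitrary $\dbar$-closed $v\in L^2(\om_E,h_L)$ need not satisfy. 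The reference to \cite{JCMP} does not fill this gap either: what is used there and recalled in Proposition~\ref{poinc} is a global Poincar\'e inequality for $(p,0)$-forms, not a local $\dbar$-Poincar\'e lemma in bidegree $(n,1)$.

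The paper avoids the issue precisely because it never needs to solve a local $\dbar$-equation for an \emph{arbitrary} $L^2$ $\dbar$-closed $(n,1)$-form, but only for a globally $\Delta''$-harmonic one. For such $v$, the form $\star v$ is a holomorphic $L$-valued $(n-1,0)$-form, and writing $\star v=\sum(-1)^{i-1}\alpha_i\,\widehat{dz_i}\otimes e_L$ with $\Phi$ a local potential for $\om_E$, the explicit primitive $u=\sum\alpha_i\,\partial_i\Phi\,dz\otimes e_L$ satisfies $\dbar u=v$ because $g_{i\bar k}=\partial_i\partial_{\bar k}\Phi$, and it is $L^2$ because $|\partial_i\Phi|\sim 1/|f_i|$ matches exactly the weight appearing in the $L^2$ bound for $\star v$ (Claim~\ref{claim dbar}, Step 6). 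The two maps between $\check H^1$ and $\mathcal H_{n,1}$ are then constructed by hand via \v Cech cocycles and orthogonal projection, as in your final paragraph, but the cocycles are produced only for harmonic $f$, which is all that is required. If you want to retain the clean fine-resolution formulation, you would have to prove local exactness of the $L^2$ $(n,\bullet)$-complex in bidegree $(n,1)$ by a more careful argument adapted to Poincar\'e geometry; as written, this is the missing ingredient.
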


\begin{proof} We proceed in several steps.\\ 

$\bullet$ {\it Step 1. Reduction to the snc case.}

\noindent
The first observation is that the statement is invariant by blow-up whose centers lie on $X\setminus X^\circ$. It is obvious for the LHS while it follows from the usual formula $\pi_*(K_{X'}\otimes \pi^*L\otimes \mathcal I(\pi^*h_L))\simeq  K_X\otimes L\otimes \mathcal I(h_L)$ as well as Grauert-Riemenschneider vanishing  $R^1\pi_*(K_{X'}\otimes \pi^*L \otimes \mathcal I(\pi^*h_L))=0$ (see \cite[Cor.~1.5]{Matsumura16}) valid for any modification $\pi:X'\to X$. So from now on, we assume that the singular locus of $h_L$ is an snc divisor. In the following, we pick a finite Stein covering $(U_i)_{i\in I}$ of $X$.

\medskip

$\bullet$ {\it Step 2. Statement of the claim to solve the $\dbar$-equation.}

\noindent
Our main tool in the proof will be the following estimate

\begin{claim}
\label{claim dbar}
Let $v$ be a $(n,1)$-from on $X^\circ$ with values in $(L, h_L)$, and such that 
\begin{equation*}
\Delta'' v=0,\qquad \int_X|v|^2_{\om_E}e^{-\varphi_L}dV_{\om_E}< \infty.
\end{equation*}
Then for each coordinate set $\Omega\subset X$ there exists an $(n, 0)$-form $u$ on $\Omega$ such that 
\begin{equation}\label{l2ot}
\dbar u= v,\qquad \int_\Omega {|u|^2_{\om_E}}e^{-\varphi_L}dV_{\om_E}<\infty,
\end{equation}  
\end{claim}
For bi-degree reasons, the $(n,0)$-form $u$ in \eqref{l2ot} is $L^2$ with respect to $h_L$ (independently of any background metric). We postpone the proof of the claim for the moment and we will use it in order to prove Proposition~\ref{cohomology}. 

\medskip

$\bullet$ {\it Step 3. The map "harmonic to cohomology".}

\noindent We first construct an application
\begin{equation}\label{v0}
\Phi: \mathcal H_{n,1}(X^\circ, L) \longrightarrow \check{H}^1 (X, K_{X} \otimes L \otimes \mathcal I (h_L))
\end{equation}
as follows. Let $f\in  \mathcal H_{n,1}(X^\circ, L)$; by definition we have $\Delta'' f=0$. 
% Thanks to Theorem~\ref{Hodge}, we have ${\mathcal H}_{n,1}(X^\circ, L)= \frac{\ker \dbar}{\Imm \dbar}$ on $L^2$-$(n,1)$-forms with respect to $(\omega_E, h_L)$. 
Therefore, on can solve on each $U_i^\circ:=U_i\cap X^\circ$ the equation $\dbar u_i=f$ where $u_i$ is an $L$-valued $(n,0)$-form on $U_i^\circ$ satisfies the condition \eqref{l2ot}. In particular, the form $u_{ij}:=u_i-u_j$ is a holomorphic $L$-valued $n$-form on $U_{ij}^\circ$ such that 
\[\int_{U_{ij}^\circ}{|u_{ij}|^2} e^{-\varphi_L} \le 2 \int_{U_{ij}^\circ} |f|^2_{\om_E}e^{-\varphi_L}dV_{\om_E} \]
It follows that $u_{ij}$ extends holomorphically across $E$ as a section of $K_{X}\otimes L \otimes \mathcal I(h_L)$ on $U_{ij}$ and therefore it defines a 
$1$-cocycle of the latter sheaf. It is straighforward to check that the class \[\Phi(f):=\{(u_{ij})_{i,j\in I}\}\in \check{H}^1(X,K_{X}\otimes L \otimes \mathcal I(h_L))\] is independent of the choice of the $L^2$-integrable form $u_i$ solving $\dbar u_i=f$.

\medskip
 
$\bullet$ {\it Step 4. The map "cohomology to harmonic".}

\noindent
Next, we have a natural morphism 
\begin{equation}
\label{v-1}
\Psi:  \check{H}^1 (X, K_{X} \otimes L \otimes \mathcal I (h_L))\longrightarrow  \mathcal H_{n,1}(X^\circ,L)
\end{equation}
Indeed, given a cocycle $v:=(v_{ij})_{i,j\in I}$ and a partition of unity $(\theta_k)_{k\in K}$  we use the Leray isomorphism and consider as usual the $L$-valued $(n,0)$-form
\begin{equation}\label{v1}
\tau_k:= \sum_{i\in I} \theta_i v_{ki} \quad \mbox{on } U_k
\end{equation}
and then the local $L$-valued forms of type $(n,1)$ 
$$\dbar \tau_k$$
are glueing on overlapping sets. Let $\beta_v$ be the resulting form. We have 
\begin{equation}\label{v2}
\dbar \beta_v=0 \qquad \mbox{and} \qquad \beta_v\in L^2,
\end{equation}
where the second property in \eqref{v2} is due to the fact that $\om_E\geq \om$. 
Under the canonical decomposition \[\ker \dbar = \mathcal H_{n,1} \overset{\perp}{\oplus} \Imm \dbar\] from Theorem~\ref{Hodge}, we define $\Psi(v)$ to be the orthogonal projection of $\beta_v$ onto $\mathcal H_{n,1}$.  It is clear that $\Psi$ above is well-defined: if $v_{ij}= v_i- v_j$, then $\tau_k-v_k$ is a global, $L^2$ form and our $\beta_v$ is exact and therefore its projection onto the kernel of $\Delta''$ is zero.

\medskip

$\bullet$ {\it Step 5. Compatibility of the maps.}

\noindent

We are left to showing that the maps $\Phi$ and $\Psi$ in \eqref{v0} and \eqref{v-1} are inverse to each other. Let $f\in \mathcal H_{n,1}$, $u_i\in L^2$ such that $\dbar u_i=f$ on $U_i^\circ$ and $u=(u_{ij})$. Then on $U_k^\circ$, one has \[\beta_u-f=\dbar(\sum_i \theta_i u_{ki}-u_k)=\dbar\left( -\sum_{i\in I} \theta_i u_i\right)\]
and that last form is globally exact in $X^\circ$ and $L^2$, hence $\Psi(\Phi(f))=f$. 

In the other direction, let $v:=(v_{ij})_{i,j\in I}$ be a cocycle and let us write $\beta_v=\Psi(v)+\dbar w$ for some $L^2$-integrable $(n,0)$-form $w$. On $U_k$, one has $\Psi(v)=\dbar(\tau_k-w)$ so that $\Phi(\Psi(v))$ is represented by the cocycle $(\tau_i-\tau_j)_{i,j\in I}=v$. 
\medskip

$\bullet$ {\it Step 6. Proof of Claim~\ref{claim dbar}.}

\noindent
In order to complete the proof of Proposition~\ref{cohomology}, we need to prove the Claim~\ref{claim dbar} that we used in the course of the proof. 

By \eqref{hp}, the form $\star v$ is holomorphic and its restriction to a coordinate subset $\Omega$ can be written as
\begin{equation}\label{v5}
\star v|_\Omega= \sum (-1)^{i-1}\alpha_i\wh dz_i\otimes e_L, \qquad \qquad \sum_j\int_\Omega \frac{|\alpha_j|^2}{|f_j|^2}e^{-\varphi_L}d\lambda< \infty
\end{equation}
where the $\alpha_i$ are holomorphic on $\Omega$ and $f_j$ is as in \eqref{a1}. 

\noindent Then we have
\begin{equation}\label{v6}
v|_\Omega= (-1)^n\sum_{i,k} \alpha_ig_{i\ol k} dz\wedge d\ol z_k\otimes e_L
\end{equation}
where $g_{i\ol k}$ are the coefficients of the metric $\om_E$.  The construction of the metric at the beginning shows that
\begin{equation}\label{v8}
g_{i\ol k}= \frac{\partial^2}{\partial z_i\partial \ol z_k}\left(\phi- \sum_j \log\log\frac{1}{|s_j|^2}\right)
\end{equation}
where $\phi$ is a local potential for the smooth metric $\omega$. Therefore we can get a primitive for $v|_\Omega$ by defining
\begin{equation}\label{v7}
u= \sum_{i} \alpha_i\frac{\partial}{\partial z_i}\left(\phi- \sum_j \log\log\frac{1}{|s_j|^2}\right)dz\otimes e_L.
\end{equation}  
By equality \eqref{v8} it verifies $\dbar u= v$ and in is also in $L^2$ as one can see by an direct explicit computation  combined with the second inequality in 
\eqref{v5}.

The proof of Proposition~\ref{cohomology} is now complete. 
\end{proof}

\section{Curvature formulas and applications}
\label{sec3}

In this section, we use the Set-up~\ref{altsetup}. We also borrow the Notation~\ref{notation} for the $L^2$ metric denoted by $h_\cF$ on the direct image bundle $\mathcal F=p_\star ( \O (K_{\cX/D}+L) \otimes \mathcal{I} (h_L))$ induced by $e^{-\phi_L}$.

Let $u \in H^0 (D, \mathcal F)$ and let $\uu$
be a $(n,0)$-form on $\cX^\circ$ representing $u$. 
Thanks to \eqref{add1}, for any smooth function $f (t)$ with compact support in $D$,
we have
\begin{equation}\label{intequl}
\int_D \|u\|^2_{h_\cF} \cdot \ddc f(t) = c_n\int_{\cX^\circ} \uu\wedge \bar\uu e^{- \phi_L} \wedge \ddc f(t) . 
\end{equation}
Recall that $h_\cF$ is smooth by Lemma \ref{prelemme}.

The aim of this section is to generalize formulas  \cite[(4.4), (4.8)]{Bo09} to our singular setting, cf Proposition~\ref{curvv} and Proposition~\ref{bobprop}.

\subsection{A general curvature formula}
%In this Subsection, we prove Proposition~\ref{curvv}. %To begin with, we introduce the following notions. 

%\begin{defn} A collection $\displaystyle (\gamma_t)_{t\in D}$ of $L^2$, smooth, $L$-valued $(n,1)$-forms on $X_t$ is called {continuous} if it has the following property. Given a point $t_0\in D$, we have 
%$$\lim_{t\to t_0}\int_{X_t}|\widetilde \gamma_{t_0}- \gamma_t|^2_{\om_E}e^{-\phi_L}dV_{\om_E}= 0$$
%for some (hence any) smooth, $L^2$ extension $\widetilde \gamma_{t_0}$ of $\gamma_{t_0}$ to the neighboring fibers. 
%\end{defn}

%\begin{defn} A representative $\uu$ of a section $u$ of $\cF$ is called continuous provided that the following hold:
%\begin{enumerate}

%\item $\uu$ is continuous as form on $\cX\setminus E$; 

%\item we have $\displaystyle u= \uu_0+ dt\wedge \star_t\gamma_t$ where $\uu_0$ is a smooth representative of 
%$u$ and $\displaystyle (\gamma_t)_{t\in D}$ is continuous.
%\end{enumerate}
%\end{defn}

\medskip

\noindent In this context we establish the following general formula, which generalise the corresponding result in \cite[(4.4)]{Bo09}.

\begin{proposition}\label{curvv}
Let $\uu$ be a continuous representative of $u$ such that:
\begin{enumerate}

\item[\rm (i)] $\uu, D' \uu$ 
and $\dbar (D'\uu)$ are
$L^2$ on $\cX^\circ$ with respect to $\omega_E, h_L$, 

\item[\rm (ii)] $\dbar \uu\wedge \overline{\dbar \uu}$ is $L^1$ on $\cX^\circ$ with respect to $\omega_E, h_L$.  
\end{enumerate}
Then the following formula holds true
\begin{align}
\label{comint}
\partial\bar\partial \|u\|^2 _{h_\cF}  \, = & \, \, c_n \Big[- p_\star ( (\Theta_{h_L}(L))_{\rm ac} \wedge \uu\wedge\bar \uu e^{-\phi_L}) + 
 (-1)^n p_\star (D' \uu\wedge \overline{D'\uu}e^{-\phi_L})\\
  &\, +  (-1)^n p_\star (\dbar \uu\wedge \overline{ \dbar\uu} \, e^{-\phi_L}) \Big] \nonumber
 \end{align}
 Here $(\Theta_{h_L}(L))_{\rm ac} $ is the absolutely continuous part of the current $\Theta_{h_L}(L)$.
 \end{proposition}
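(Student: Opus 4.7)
\emph{Plan.} Since $h_\cF$ is smooth on $D$ by Lemma~\ref{prelemme}, it suffices to verify \eqref{comint} in the distributional sense. Pairing both sides against an arbitrary $f \in \mathcal{C}_c^\infty(D)$ and using \eqref{intequl}, the task reduces to identifying
\[
c_n \int_{\cX^\circ} \uu \wedge \bar\uu\, e^{-\phi_L} \wedge \ddc(f\circ p)
\]
with $\int_D f$ times the RHS of \eqref{comint}. The natural approach is to transfer the two derivatives in $\ddc(f\circ p)$ onto the integrand $\uu \wedge \bar\uu\, e^{-\phi_L}$ via two successive integrations by parts. To handle the non-compactness of $\cX^\circ$, I would introduce the cut-off functions $(\mu_\ep)_{\ep>0}$ from Lemma~\ref{cutoff} and let $\ep \to 0$ at the end.

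\emph{Step 1 (pointwise identity on $\cX^\circ$).} Since $\phi_L$ is smooth on $\cX^\circ$, the Leibniz rule together with the definition $D'\uu = \partial\uu - \partial\phi_L\wedge \uu$ and the Chern curvature identity $\bar\partial D'\uu + D'\bar\partial\uu = \Theta_{h_L}(L)\wedge \uu$ yields, for $\uu$ smooth, the Berndtsson-type pointwise identity
\begin{align*}
\ddbar(\uu\wedge \bar\uu\,e^{-\phi_L}) = & -\Theta_{h_L}(L) \wedge \uu\wedge \bar\uu\,e^{-\phi_L} \\ &+ (-1)^n D'\uu\wedge \overline{D'\uu}\,e^{-\phi_L} + (-1)^n \bar\partial\uu\wedge \overline{\bar\partial\uu}\,e^{-\phi_L} .
\end{align*}
Under the hypotheses (i)--(ii), each term in this identity makes sense as an $L^1$-current on $\cX^\circ$; in particular, the $L^2$-integrability of $\bar\partial(D'\uu)$ in~(i) is needed to interpret $\ddbar(\uu\wedge\bar\uu\,e^{-\phi_L})$ distributionally with the expansion shown.

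\emph{Step 2 (integration by parts and error terms).} For each $\ep>0$ I would apply Stokes' theorem twice to
\[
c_n \int_{\cX^\circ} \mu_\ep^2\, \uu\wedge \bar\uu\,e^{-\phi_L} \wedge \ddc(f\circ p) ,
\]
transferring both derivatives onto $\mu_\ep^2\, \uu\wedge\bar\uu\,e^{-\phi_L}$; this is legitimate because the integrand is compactly supported in $\cX^\circ$, where $\phi_L$ is smooth. Expanding via Leibniz and substituting the identity of Step~1 on $\mathrm{supp}(\mu_\ep)$, the result decomposes into the three main terms of \eqref{comint} weighted by $\mu_\ep^2$, together with boundary/error terms involving pairings of $\partial\mu_\ep^2, \bar\partial\mu_\ep^2, \ddbar\mu_\ep^2$ with $\uu, D'\uu, \bar\partial\uu$ and $\bar\partial(D'\uu)$.

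\emph{Step 3 (limit $\ep \to 0$, and main obstacle).} By Lemma~\ref{cutoff}(c) the norms $|\partial\mu_\ep|_{\om_E}$ and $|\ddbar\mu_\ep|_{\om_E}$ are uniformly bounded, while the supports of these derivatives escape to infinity in $(\cX^\circ,\om_E)$ (equivalently, they shrink toward $E$ inside $\cX$) as $\ep\to 0$. The Cauchy--Schwarz inequality combined with the global $L^2$-integrability conditions~(i)--(ii) then bounds each error term by $\|f\|_\infty$ times the $L^2$-mass of $\uu, D'\uu, \bar\partial\uu$ or $\bar\partial(D'\uu)$ on $\mathrm{supp}(\partial\mu_\ep)$, a quantity that tends to zero by dominated convergence. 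The three main terms converge to the corresponding push-forwards in \eqref{comint} by dominated (or, for the curvature piece, monotone) convergence. Since the integration takes place on $\cX^\circ$, the singular part of $\Theta_{h_L}(L)$ (which is supported in $E$) drops out, and the curvature integral is precisely the one involving $(\Theta_{h_L}(L))_{\rm ac}$. The delicate point is the simultaneous control of all error terms: although $|\partial\mu_\ep|_{\om_E}$ is only uniformly bounded and does not decay pointwise, these derivatives are supported on sets whose $L^2$-mass in the relevant norms tends to zero, which is precisely the role of hypotheses~(i)--(ii).
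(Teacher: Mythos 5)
Your overall plan (cut off with $(\mu_\ep)$, apply $\ddbar$, use a Berndtsson-type pointwise identity, let $\ep\to 0$) is the same one the paper follows, but there is a genuine gap in the handling of the error terms in Step 3, and this gap is exactly what makes the proposition nontrivial with the stated hypotheses.

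You list the error terms as pairings of $\partial\mu_\ep,\dbar\mu_\ep,\ddbar\mu_\ep$ with $\uu,D'\uu,\dbar\uu,\dbar D'\uu$ and propose to kill them by Cauchy--Schwarz using ``the $L^2$-mass of $\uu,D'\uu,\dbar\uu$ or $\dbar D'\uu$.'' However, hypothesis (ii) gives only that $\dbar\uu\wedge\overline{\dbar\uu}$ is $L^1$, which is strictly weaker than $\dbar\uu\in L^2$: this is pointed out explicitly in the remark following Proposition~\ref{curvv}, and it matters for the later application in Theorem~\ref{mainresI} where no $L^2$ bound on $\dbar\uu$ is available. The cross terms of the form
\[
\int_{\cX^\circ} \partial(\mu_\ep^2)\wedge\dbar\uu\wedge\bar\uu\,e^{-\phi_L}\,f\quad\text{and}\quad
\int_{\cX^\circ} \dbar(\mu_\ep^2)\wedge\uu\wedge\overline{\dbar\uu}\,e^{-\phi_L}\,f
\]
cannot be discarded by Cauchy--Schwarz against the $L^2$-mass of $\dbar\uu$, because that mass is not assumed finite. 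This is not a technicality that dominated convergence can absorb; there is no integrable majorant under the stated assumptions.

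The paper resolves precisely this point by a further integration by parts applied to those error terms (see the computation following \eqref{eqqqq}), which trades the troublesome factor $\overline{\dbar\uu}$ for $D'\uu$ together with second derivatives of the cut-off, both of which are controlled by hypothesis (i) and Lemma~\ref{cutoff}(c). In parallel, the paper first establishes the auxiliary identity \eqref{partialimi} from the vanishing $\int\psi_\ep\,\uu\wedge\overline{\dbar\uu}\,e^{-\phi_L}\,\dbar f=0$ (itself a consequence of $\dbar\uu\wedge dt=0$), which lets the term $\int\dbar\uu\wedge\overline{\dbar\uu}\,e^{-\phi_L}f$ enter only where the $L^1$ hypothesis (ii) suffices. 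Without incorporating this second integration by parts into Step 2/Step 3, your argument does not close; it implicitly strengthens hypothesis (ii) to $\dbar\uu\in L^2$. A secondary, more minor issue is in Step 1: the pointwise Leibniz identity is asserted for smooth $\uu$ but the proposition only assumes $\uu$ continuous, so a short justification (e.g.\ that $\uu$ is smooth on $\cX^\circ$ away from a measure-zero set, or a regularization argument) is needed before the identity may be used distributionally.
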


\begin{remark}
Here we merely require the $L^1$ integrability of $\dbar \uu\wedge \overline{\dbar \uu}$ and not the $L^2$-integrability of $\dbar \uu$. The reason is that for later application in  Theorem~\ref{mainresI}, we could only obtain the former condition. It is not clear whether the term $\dbar\uu$ in Theorem~\ref{mainresI} is $L^2$.
\end{remark}	

The proof of Proposition~\ref{curvv} will require a few preliminary computations and will be given on page~\pageref{page curvv} below. First, we start with the following result legitimizing integration by parts.

\begin{lemme}
\label{IPP}
If $\uu$ and $D' \uu$ are $L^2$ on $\cX^\circ$, we have
$$
\int_{\cX^\circ} \uu\wedge \bar\uu e^{- \phi_L } i\partial\bar\partial f(t)  = - \int_{\cX^\circ} 
D' \uu \wedge \bar\uu e^{-\phi_L} i\bar\partial f(t)  .$$ 
\end{lemme}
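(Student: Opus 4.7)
The plan is to combine a Stokes' theorem argument with a family of compactly supported cutoffs adapted to the non-compact setting, and then pass to the limit using the $L^2$-hypotheses. Concretely, I would use the cutoff family $(\mu_\ep)_{\ep>0}$ provided by Lemma~\ref{cutoff}: each $\mu_\ep$ is smooth and compactly supported in $\cX^\circ$, the sets $\{\mu_\ep=1\}$ exhaust $\cX^\circ$, and $|\partial \mu_\ep|_{\om_E}$ is bounded uniformly in $\ep$. Setting $v := \uu \wedge \bar\uu\, e^{-\phi_L}$, the form $\mu_\ep v \wedge i \bar\partial f$ is compactly supported on $\cX^\circ$, so Stokes' theorem together with bidegree considerations (only the $(n{+}1,n{+}1)$-bidegree piece survives) yields
\[0 = \int_{\cX^\circ} \partial \mu_\ep \wedge v \wedge i\bar\partial f + \int_{\cX^\circ} \mu_\ep\, \partial v \wedge i\bar\partial f + \int_{\cX^\circ} \mu_\ep v \wedge i \partial\bar\partial f.\]

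Next I would compute $\partial v$ in a local trivialization of $L$ with weight $\phi_L$, which gives
\[\partial v = D'\uu \wedge \bar\uu\, e^{-\phi_L} + (-1)^n \uu \wedge \overline{\bar\partial \uu}\, e^{-\phi_L}.\]
The crucial observation is that the second summand is annihilated by the wedge with $i\bar\partial f = i f_{\bar t}\, d\bar t$: indeed, by \eqref{add2} we have $\bar\partial \uu = dt \wedge \eta$, hence $\overline{\bar\partial \uu} = d\bar t \wedge \bar\eta$, and $d\bar t \wedge d\bar t = 0$. Thus the identity reduces to
\[\int_{\cX^\circ} \mu_\ep v \wedge i\partial\bar\partial f = -\int_{\cX^\circ} \mu_\ep D'\uu \wedge \bar\uu\, e^{-\phi_L} \wedge i\bar\partial f - \int_{\cX^\circ} \partial\mu_\ep \wedge v \wedge i\bar\partial f.\]

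The main obstacle is passing to the limit $\ep \to 0$. For the two integrals not involving $\partial\mu_\ep$, Cauchy--Schwarz combined with the $L^2$-hypotheses on $\uu$ and $D'\uu$ (and the trivial bound $|d\bar t|_{\om_E} \leq |d\bar t|_\om \leq C$) produces $L^1$ dominating functions, so dominated convergence yields the two sides of the sought identity. The delicate point is the cutoff error: the uniform bound $|\partial \mu_\ep|_{\om_E} \leq C$ from Lemma~\ref{cutoff}(c) and Cauchy--Schwarz give
\[\Bigl| \int_{\cX^\circ} \partial \mu_\ep \wedge v \wedge i\bar\partial f \Bigr| \leq C_f \int_{\Supp(\partial \mu_\ep)} |\uu|^2_{\om_E}\, e^{-\phi_L}\, dV_{\om_E},\]
which tends to $0$ as $\ep \to 0$ since $\uu \in L^2(\cX^\circ)$ and $\Supp(\partial\mu_\ep)$ is contained in an arbitrarily small $\om_E$-neighbourhood of the Poincar\'e-null locus $E$. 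This completes the argument.
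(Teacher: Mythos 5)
Your proof is correct and follows essentially the same route as the paper: both insert the cutoff $\mu_\ep$ (the paper calls it $\psi_\ep$ in the proof), integrate by parts (your Stokes identity is the same computation), kill the $\overline{\dbar\uu}$ term by the bidegree observation $\overline{\dbar\uu}=d\bar t\wedge\bar\eta$, and pass to the limit using the $L^2$ hypotheses together with the uniform gradient bound from Lemma~\ref{cutoff}. Your treatment of the cutoff error term via Cauchy--Schwarz and the shrinking support of $\partial\mu_\ep$ is exactly the point the paper is invoking when it says the first term tends to zero because $\uu$ is globally $L^2$.
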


\begin{proof}
Let $\psi_\ep$ be the cut-off fonction in Lemma \ref{cutoff}.
Since $\uu$ is $L^2$ bounded with respect to $\phi_L$ and $\omega_E$, we have 
$$ \int_{\cX^\circ} \uu\wedge \bar\uu e^{- \phi_L} i\partial\bar\partial f(t)   = \lim_{\ep \rightarrow 0}\int_{\cX^\circ} 
\psi_\ep \uu\wedge \bar\uu e^{- \phi_L} i\partial\bar\partial f(t) . 
$$
An integration by parts yields 
{\small
\begin{align}
\label{firstee}
\int_{\cX^\circ} 
\psi_\ep \uu\wedge \bar\uu e^{- \phi_L} i\partial\bar\partial f(t) =&
 - \int_{\cX^\circ} i\partial \psi_\ep \wedge \uu\wedge \bar\uu e^{- \phi_L} \bar\partial f(t)  \\
\nonumber & -\int_{\cX^\circ} \psi_\ep \wedge D' \uu\wedge \bar\uu e^{-\phi_L} i\bar\partial f(t) - 
(-1)^n \int_{\cX^\circ} 
\psi_\ep  \wedge \uu\wedge \overline{ \dbar \uu}e^{-\phi_L} i\bar\partial f(t) .
\end{align}}
Since $\uu$ is a representative of a holomorphic section $u$, we know by \eqref{add2} that 
$ \dbar \uu= dt \wedge \eta$, hence
\begin{equation}\label{degreecon}
\dbar \uu\wedge dt =0
\end{equation}
and the third term of RHS of \eqref{firstee} vanishes.

The first term of RHS of \eqref{firstee} tends to $0$ because $\uu$ is assumed to be globally $L^2$-integrable. Similarly, we see that the second term of RHS of  \eqref{firstee}  tends to
$$-\int_{\cX^\circ} 
D' \uu \wedge \bar\uu e^{-\phi_L} i\bar\partial f(t) .$$
The lemma is thus proved.
\end{proof}

%The local expression of $\partial \psi_\ep \wedge \uu\wedge \bar\uu \wedge \bar\partial f(t)$ divided $|dz\wedge dt|^2$ (here $dz=dz_1\wedge dz_n$) is given by
%\[|a|^2\partial_{\bar t} f\left( \partial_t \psi+\sum_\alpha (-1)^{\alpha}g^{\bar \beta\alpha}g_{t\beta}\partial_\alpha \psi\right).\]
%Now the function $\psi$ has bounded gradient with respect to $\om_E$. More precisely, one can check on the explicit expression of $\psi$ that $|\partial_\alpha \psi |\lesssim \frac{1}{|z_\alpha| (-\log|z_\alpha|)}$ and $|\partial_t \psi|\lesssim 1$ by transversality of $E$ to the fibers.   In particular, $\partial_t \psi$ and $g^{\bar \beta\alpha}g_{t\beta}\partial_\alpha \psi$ are bounded and the integrand in the first integral is dominated by a multiple of $|a|^2$, and tends to zero pointwise. Hence the integral converges to zero. 
%
%As for the second term of \eqref{firstee}, similar computations relying on the lines below \eqref{b5} show that the integrand is uniformly dominated by an $L^2$ integrable function on the total space. Hence 

\noindent As a corollary of Lemma~\ref{IPP} above, we can compute the Chern connection of $(\cF, h_\cF)$ as follows. 
\begin{cor}\label{D1}
Let $u\in H^0(D,\cF)$ and let $\uu$ be a smooth representative of $u$. We have 
\[\nabla u = P (\mu) dt\]  
where 
\begin{enumerate}[label=-]
\item $\nabla$ is the $(1,0)$-part of the Chern connection on $(\mathcal F, h_\cF)$.
\item $\mu$ is defined by $D'\uu=dt\wedge \mu$, cf. \eqref{add3}, and $\mu|_{X_t}$ only depends on $u$. 
\item $P (\mu )$ is the fiberwise projection onto $H^0 (X_t, (K_{X_t} +L_t)\otimes \mathcal I(h_{L_t}))$ with respect to the $L^2$-norm. 
\end{enumerate}
\end{cor}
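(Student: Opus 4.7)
The strategy is to derive $\nabla u = P(\mu)\,dt$ by combining the integration-by-parts identity of Lemma~\ref{IPP}, polarized with a second holomorphic section $v \in H^0(D,\cF)$, with fibrewise $L^2$-orthogonal projection.

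I would first extend Lemma~\ref{IPP} to two different representatives $\uu$ and $\vv$: the same cut-off argument applies, and the 'cross term' $\int \uu \wedge \partial\bar\vv\, e^{-\phi_L} \wedge i\dbar f$ vanishes because the identity $\dbar(\vv \wedge dt) = 0$ on $\cX$ (coming from the holomorphicity of $v$) yields $\partial\bar\vv \wedge d\bar t = 0$. This gives, for every $f \in C_c^\infty(D)$,
\[
\int_{\cX^\circ} \uu \wedge \bar\vv\, e^{-\phi_L}\, i\partial\dbar f \;=\; -\int_{\cX^\circ} D'\uu \wedge \bar\vv\, e^{-\phi_L}\, i\dbar f.
\]
Writing $D'\uu = dt \wedge \mu$ (cf.~\eqref{add3}) and $i\dbar f = i\partial_{\bar t} f\, d\bar t$, Fubini rewrites the right-hand side as $-\int_D i\partial_{\bar t} f\, \langle \mu|_{X_t}, v_t\rangle_{h_\cF}\, dt \wedge d\bar t$. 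Because $v_t$ lies in the closed subspace $\cF_t \subset L^2(X_t)$ of holomorphic forms, the fibrewise pairing agrees with $\langle P(\mu|_{X_t}), v_t\rangle_{h_\cF}$. The same orthogonality argument settles the asserted independence of $P(\mu|_{X_t})$ on the representative: if $\uu' = \uu + dt \wedge \beta$ is a second representative, then $\mu'|_{X_t} - \mu|_{X_t} = -D'_t(\beta|_{X_t})$, and Stokes on the compact fibre $X_t$ combined with $\partial\bar v_t = 0$ annihilates its $L^2$-pairing with any holomorphic $v_t$.

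Applying Stokes on $D$ to the left-hand side, together with the Chern-connection identity $\partial \langle u, v\rangle_{h_\cF} = \langle \nabla u, v\rangle_{h_\cF}$ (valid because $v$ is holomorphic, so $\nabla'' v = \dbar v = 0$), turns it into $-\int_D i\partial_{\bar t} f \cdot \langle \gamma(t), v(t)\rangle_{h_\cF}\, dt \wedge d\bar t$, where I write $\nabla u = \gamma(t) \otimes dt$ with $\gamma(t) \in \cF_t$. Matching both expressions for every test $f$ yields the pointwise identity $\langle \gamma(t), v(t)\rangle = \langle P(\mu|_{X_t}), v(t)\rangle$; polarizing over $v$—legitimate because local holomorphic sections span each fibre of the vector bundle $\cF$—gives $\gamma(t) = P(\mu|_{X_t})$, i.e.\ $\nabla u = P(\mu)\,dt$. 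The main subtlety is the sign bookkeeping in the first step, in particular verifying that the $\partial\bar\vv$ term drops out even though $\vv$ is not itself holomorphic on $\cX$; once that is secured, the rest of the argument is essentially formal.
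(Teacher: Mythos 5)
Your proof follows essentially the same route as the paper's: polarize Lemma~\ref{IPP} to two holomorphic sections (the cross term dropping out because $\overline{\dbar\vv}\wedge d\ol t=0$), combine with the Chern-connection identity $\partial\langle u,v\rangle_{h_\cF}=\langle\nabla u,v\rangle_{h_\cF}$, and match fibrewise using the arbitrariness of $f$ and then of $v$. The only imprecision is in your aside on representative-independence: the fibre $X_t^\circ = X_t\setminus E$ is \emph{not} compact, so the vanishing of $\int_{X_t^\circ}D'_t\beta\wedge\bar v_t\,e^{-\phi_L}$ requires the cut-off functions of Lemma~\ref{cutoff} (and suitable $L^2$ control on $\beta$, $D'_t\beta$) rather than bare Stokes on $X_t$ --- though this aside is not needed to establish $\nabla u = P(\mu)\,dt$ for the representative \eqref{uu} used in the paper.
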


\begin{proof}
Let $\nabla$ be the $(1,0)$-part of the Chern connection of $(\cF, h_\cF)$. Then we have 
$$\nabla u = \sigma \otimes dt ,$$
where $\sigma= \frac{\nabla u}{dt} \in \cC^{\infty}(D, \cF)$. 
Let $u, v$ be two holomorphic sections of $\cF$ and let $f$ be a smooth function with compact support in $D$.
Since $v$ is a holomorphic, we have 
$$\int_{D} \langle u, v\rangle  i\partial\bar\partial f(t) = \int_{D} \langle \nabla u, v\rangle \wedge i \bar\partial f(t).$$
Let $\uu$ and  $\vv$ be the representatives of $u$ and $v$ respectively given by \eqref{uu}. The argument already used in Lemma \ref{IPP} shows that we have
$$
\int_{\cX^\circ} \uu\wedge \bar\vv e^{- \phi_L } i\partial\bar\partial f(t)  = \int_{\cX^\circ} 
D'\uu  \wedge \bar\vv e^{-\phi_L} i\bar\partial f(t)  .$$ 
Here $D'$ is, as before, the Chern connection on $(L\to \cX^\circ, h_L)$. As a consequence, we have 	
$$\int_D \langle \nabla u, v\rangle \wedge i \bar\partial f(t) = \int_{\cX^\circ} 
D'\uu  \wedge \bar\vv e^{-\phi_L} i\bar\partial f(t) .$$
Since we can choose $f$ on the base $D$ arbitrarily, we infer
\begin{align*}
\int_{X^\circ _t}\langle\sigma_t , v_t \rangle  \underset{\rm def}{=} \int_{X^\circ _t}\langle \frac{\nabla u}{dt}, v\rangle_t&= \int_{X^\circ _t} \frac{D' \uu}{dt}  \wedge \bar\vv e^{-\phi_L} \\
&= \int_{X^\circ _t} \mu \wedge \bar v_t e^{-\phi_L}\\
&= \int_{X^\circ _t} P(\mu|_{X_t}) \wedge \bar v_t e^{-\phi_L}
\end{align*}
for each $t\in D$.

\noindent As the above holds for any holomorphic section $v$, we obtain thus 
\begin{equation*}
\nabla u = P (\mu ) dt
\end{equation*} 
on $D$.
\end{proof}	
\medskip

\noindent We can now complete the proof of Proposition~\ref{curvv}.
\label{page curvv}
\begin{proof}[Proof of Proposition~\ref{curvv}] 
Let $f \in \cC_c ^{\infty} (D)$. By \eqref{degreecon}, we have 
$$ \int_{\cX^\circ} \psi_\ep \uu\wedge \overline{\bar\partial \uu} e^{-\phi_L} \bar\partial f(t) =0$$
for every $\ep$.
By integration by parts,  we obtain
\begin{equation}\label{eqqqq}
\int_{\cX^\circ}  \dbar \psi_\ep \wedge  \uu\wedge \overline{\bar\partial \uu} e^{-\phi_L}  f (t)
+ \int_{\cX^\circ}\psi_\ep \bar\partial\uu\wedge \overline{\bar\partial \uu} e^{-\phi_L}  f(t) + (-1)^n \int_{\cX^\circ}\psi_\ep  \uu\wedge  \overline{D' \bar\partial \uu} e^{-\phi_L} f(t) =0 .
\end{equation}

For the first term of \eqref{eqqqq}, by integration by parts again, we have 
{\small
\begin{align*}
(-1)^n \int_{\cX^\circ}  \dbar \psi_\ep \wedge  \uu\wedge \overline{\bar\partial \uu} e^{-\phi_L}  f (t) =& -\int_{\cX^\circ}  \dbar \psi_\ep \wedge  D' \uu\wedge \overline{\uu} e^{-\phi_L}  f (t) + \int_{\cX^\circ}  \partial\dbar \psi_\ep \wedge  \uu\wedge \overline{\uu} e^{-\phi_L}  f (t)\\
& - \int_{\cX^\circ}  \dbar \psi_\ep \wedge  \uu\wedge \overline{\uu} e^{-\phi_L} \wedge \partial f (t).
\end{align*}
}
Recall that $d\psi_\ep$ and $dd^c\psi_\ep$ are uniformly bounded with respect to $\om_E$ and converge to zero pointwise. Since $\uu$ and $D'\uu$ are $L^2$ by assumption, we see from Lebesgue dominated convergence theorem that the RHS tends to $0$. Therefore the first term of \eqref{eqqqq} tends to $0$.

 Since $\bar\partial\uu\wedge \overline{\bar\partial \uu} $ is $L^1$, the second term of \eqref{eqqqq} tends to 
$\int_{\cX^\circ} \bar\partial\uu\wedge \overline{\bar\partial \uu} e^{-\phi_L}  f(t) $.
We obtain thus
\begin{equation}\label{partialimi}
\int_{\cX^\circ} \bar\partial\uu\wedge \overline{\bar\partial \uu} e^{-\phi_L}  f(t) = (-1)^{n-1}\lim_{\ep \rightarrow 0}  \int_{\cX^\circ} \psi_\ep \uu\wedge  \overline{D' \bar\partial \uu} e^{-\phi_L} f(t) . 
\end{equation}
\medskip

\noindent We complete in what follows the proof of the proposition. We have
{\small
\begin{align*}
\int_{\cX^\circ} \uu\wedge \bar\uu e^{- \phi_L} \wedge \dbar \partial f(t) 
=& \lim_{\ep \rightarrow 0}\int_{\cX^\circ} \psi_\ep \uu\wedge \bar\uu e^{- \phi_L} \wedge\dbar \partial  f(t) \\
=& - \lim_{\ep \rightarrow 0}\Big[\int_{\cX^\circ} \dbar\psi_\ep\wedge \uu\wedge \bar\uu e^{- \phi_L} \wedge \partial f(t) + 
\int_{\cX^\circ} \psi_\ep\wedge \dbar \uu\wedge \bar\uu e^{- \phi_L} \wedge \partial f(t) \\
&+ (-1)^n \int_{\cX^\circ} \psi_\ep\wedge \uu\wedge \overline{D'\uu} e^{- \phi_L} \wedge \partial f(t) \Big]
\end{align*}}
Note that the first term tends to $0$ since $\uu$ is $L^2$. The second term vanishes because of \eqref{degreecon}. Then we have
$$\int_{\cX^\circ} \uu\wedge \bar\uu e^{- \phi_L} \wedge \dbar \partial  f(t) 
= (-1)^{n-1} \lim_{\ep \rightarrow 0} \int_{\cX^\circ} \psi_\ep\wedge \uu\wedge \overline{D'\uu} e^{- \phi_L} \wedge \partial f(t) . $$ 
Applying again integration by part, the RHS above becomes
{\footnotesize
\begin{equation}\label{eq001}
 \lim_{\ep \rightarrow 0} (-1)^{n-1}\int_{\cX^\circ} \partial\psi_\ep \wedge \uu\wedge \overline{D'\uu} e^{- \phi_L}  f(t)  + (-1)^{n-1}
\int_{\cX^\circ} \psi_\ep  D' \uu\wedge \overline{D'\uu} e^{- \phi_L}  f(t)
- \int_{\cX^\circ} \psi_\ep  \uu\wedge \overline{\dbar D'\uu} e^{- \phi_L}  f(t) .
\end{equation}}
\medskip

As $\uu$ and $D' \uu$ are $L^2$, the first term of \eqref{eq001} tends to $0$, and the second term of \eqref{eq001} tends to 
$\int_{\cX^\circ} D' \uu\wedge \overline{D'\uu} e^{- \phi_L}  f(t)$.
For the third term, as $\dbar D'\uu= \Theta_{h_L} (L) - D' \dbar \uu$, we have 
\begin{equation}\label{eq000}
\int_{\cX^\circ} \psi_\ep  \uu\wedge \overline{\dbar D'\uu} e^{- \phi_L}  f(t)  
= -\int_{\cX^\circ} \psi_\ep  \uu\wedge \overline{D' \dbar\uu} e^{- \phi_L}  f(t) -
\int_{\cX^\circ} \psi_\ep  \Theta_{h_L} (L)\uu\wedge \overline{\uu} e^{- \phi_L}  f(t) .
\end{equation}
Combining with \eqref{partialimi},  we obtain
$$\lim_{\ep \rightarrow 0} \int_{\cX^\circ} \psi_\ep  \uu\wedge \overline{\dbar D'\uu} e^{- \phi_L}  f(t) 
= (-1)^n \int_{\cX^\circ} \bar\partial\uu\wedge \overline{\bar\partial \uu} e^{-\phi_L}  f(t)  -
\int_{\cX^\circ} \Theta_{h_L} (L)\wedge \uu\wedge \overline{\uu} e^{- \phi_L}  f(t) .$$

All three terms of the RHS of \eqref{eq001} have now been calculated and the sum is just 
{\small
\[(-1)^{n-1} \int_{\cX^\circ}  D' \uu\wedge \overline{D'\uu} e^{- \phi_L}  f(t) + (-1)^{n-1}
\int_{\cX^\circ} \bar\partial\uu\wedge \overline{\bar\partial \uu} e^{-\phi_L}  f(t)  + 
\int_{\cX^\circ} \Theta_{h_L} (L)\wedge \uu\wedge \overline{\uu} e^{- \phi_L}  f(t) .\]
}
The proposition is thus proved.
\end{proof}

\subsection{A characterization of flat sections}
Now for applications, we need to generalize \cite[Prop 4.2]{Bo09} and formula  
\cite[(4.8)]{Bo09} to our singular setting. 
%To begin with, let $h_\cF$ be the $L^2$-metric on $\mathcal F:=p_\star ((K_{\cX /D} +L)\otimes \mathcal I(h_L))$ induced by the singular metric $h_L$ on $L$. Recall that $h_\cF$ is smooth by Lemma \ref{prelemme}.
Following the argument of \cite[Prop 4.2]{Bo09}, we have the following.

\begin{proposition}\label{bobprop} We assume that the coefficients $b_I$ in the Set-up condition \ref{A2} are equal to zero.
Let $u$ be a holomorphic section of $\cF$ on $D$ such that $\nabla u(0) =0$. Then $u$ can be represented by a smooth $(n,0)$-form $\uu$ on $\cX^\circ$, $L^2$ with respect to $h_L , \omega_E$,  
such that
$$\dbar \uu= dt \wedge \eta$$ 
for some $L^2$-form $\eta$ which is primitive (with respect to $\omega_E |_{X^\circ}$)  on $X^\circ$, and 
$$D' \uu=d t \wedge \mu$$
for some $\mu$ satisfying  $\mu |_{X^\circ} =0$. 
%In particular, we find \[\langle i\Theta_{h_\cF}(\cF)u,u\rangle_0= \]
Here $X^\circ := X_0 \cap \cX^\circ$ is on the central fiber.
\end{proposition}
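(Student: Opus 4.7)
The plan is to construct $\uu$ by a fiberwise correction of the canonical horizontal-lift representative $\uu_0 := V \lrcorner (dt \wedge U_0)$ from \eqref{uu}. By Lemma~\ref{meta}, $\uu_0$, $\eta_0$ and $\mu_0$ are already $L^2$ on $\cX^\circ$. A direct computation using the horizontality relation $\om_E(V, \d_{\bar\alpha}) = 0$ together with the explicit local formulas from the proof of Lemma~\ref{meta} will show that $\eta_0|_{X_t^\circ}$ is primitive with respect to $\om_E|_{X_t^\circ}$ for every $t \in D$ (the classical Berndtsson observation, adapted to our singular setting). Combining Corollary~\ref{D1} with the hypothesis $\nabla u(0) = 0$ yields that $\mu_0|_{X_0^\circ}$ is $L^2$-orthogonal to the space $H^0(X_0, (K_{X_0}+L) \otimes \mathcal I(h_L|_{X_0}))$ of holomorphic representatives on $X_0$.

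I will then look for an $(n-1, 0)$-form $w$ on $X_0^\circ$, $L^2$ with respect to $(\om_E, h_L)$, satisfying simultaneously
\[D'w = \mu_0|_{X_0^\circ} \qquad \text{and} \qquad \om_E \wedge \dbar w = 0.\]
Granted such a $w$, extending it to a smooth form $\tilde w$ on $\cX^\circ$ by pulling back via a smooth retraction onto $X_0$ in a tubular neighborhood of $X_0$ and cutting off by $\chi(t)$ with $\chi(0) = 1$, the corrected representative $\uu := \uu_0 + dt \wedge \tilde w$ will satisfy $\uu|_{X_t} = \uu_0|_{X_t}$ (still a representative of $u(t)$), while $\mu|_{X_0^\circ} = \mu_0|_{X_0} - D'w = 0$ and $\eta|_{X_0^\circ} = \eta_0|_{X_0} - \dbar w$ is primitive as the sum of two primitive forms.

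To construct $w$, I will apply the $L^2$ Hodge theory of Section~\ref{hodge} on $(X_0^\circ, \om_E|_{X_0^\circ})$; this is where the assumption $b_I = 0$ is crucial, as it ensures that $h_L|_{X_0}$ has purely analytic singularities. Since $\mu_0|_{X_0}$ is orthogonal to the $L^2$-holomorphic $(n, 0)$-forms and $\Imm \dbar^*$ has closed range in bi-degree $(n, 0)$ (from the $L^2$ estimate \eqref{eq32334} applied to $\dbar$ on $(n, 0)$-forms), one can solve $\dbar^* v = \mu_0|_{X_0}$ for an $L^2$ $(n, 1)$-form $v$; taking the solution of minimal $L^2$ norm, i.e. $v \perp \ker \dbar^*$, gives $v \in \Imm \dbar$ (closed, by Theorem~\ref{Hodge}), so $v = \dbar V$ for some $L^2$ $(n, 0)$-form $V$, and in particular $\dbar v = 0$. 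Moreover $D'v = 0$ for bi-degree reasons on the $n$-dimensional manifold $X_0$. I then define $w := i\,\Lambda v$. The K\"ahler identity $\dbar^* = -i[\Lambda, D']$ combined with $D'v = 0$ yields $D'w = iD'\Lambda v = \dbar^* v = \mu_0|_{X_0}$. The K\"ahler identity $D'^* = -i[\dbar, \Lambda]$ combined with $\dbar v = 0$ yields $\dbar w = -D'^* v$, which is $L^2$ via the Bochner formula $\|D'^* v\|^2 \le \|\dbar^* v\|^2 = \|\mu_0|_{X_0}\|^2$ (valid since $i\Theta_{h_L}(L) \ge 0$). Finally, using $[L, \dbar] = 0$ and the algebraic identity $L\Lambda v = v$ on $(n, 1)$-forms (from $[L, \Lambda] = (p+q-n)\mathrm{Id}$ combined with $Lv = 0$ for bi-degree reasons on $X_0$), one obtains $LD'^* v = -iL\dbar \Lambda v = -i\dbar(L\Lambda v) = -i\dbar v = 0$, whence $\om_E \wedge \dbar w = -LD'^* v = 0$ on $X_0^\circ$.

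The main obstacle is the simultaneous resolution of the two constraints on $w$ on the non-compact complete K\"ahler manifold $X_0^\circ$ carrying a singular line bundle; it is overcome by the specific choice $w = i\Lambda v$ together with the K\"ahler identities and relies critically on the closedness of $\Imm \dbar$ and $\Imm \dbar^*$ in the relevant bi-degrees, which is precisely the content of Theorem~\ref{Hodge} and its underlying $L^2$ estimates. A secondary technicality is the $L^2$-integrability of $\uu, \eta, \mu$ globally on $\cX^\circ$ after the extension from $X_0^\circ$ to $\cX^\circ$; this follows by combining Lemma~\ref{meta} for the $\uu_0$-parts with a direct calculation of the correction terms based on the product structure of the chosen tubular neighborhood.
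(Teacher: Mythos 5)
Your proposal is correct and follows essentially the same strategy as the paper's proof: start with the horizontal-lift representative $\uu_0$, use Remark~\ref{prim}/Corollary~\ref{D1} to get primitivity of $\eta_0$ and orthogonality of $\mu_0|_{X_0^\circ}$, solve the $\dbar^\star$-equation $\dbar^\star v = \mu_0|_{X_0^\circ}$ via Theorem~\ref{Hodge} with $v\in \Imm\dbar$, and correct by $dt\wedge(\text{extension of a contraction of }v)$. The only cosmetic difference is that the paper phrases the contraction as $\star_0\beta_0$ while you use $i\Lambda v$; on $(n,1)$-forms of an $n$-fold these agree up to a unimodular constant, and your Kähler-identity computation spells out what the paper leaves implicit (that $D'(\star_0\beta_0)\propto\dbar^{\star_0}\beta_0$ and that $\dbar(\star_0\beta_0)$ is primitive because $\beta_0$ is $\dbar$-closed).
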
	

\begin{proof}
Let $\uu$ be the representative constructed in \eqref{uu}. We have 
$$D' \uu= dt \wedge \mu \qquad \dbar \uu=dt \wedge \eta .$$
Then $\mu|_{X^\circ}$ is orthogonal to the space of $L^2$-holomorphic section by Corollary~\ref{D1}.
\smallskip

By Remark \ref{prim} our representative $\uu$ has the following property 
\begin{equation}\label{defu1}
\uu\wedge \omega_E =dt \wedge d\ol t\wedge u_1 
\end{equation}
for some $(n,0)$-form $u_1$ on  $\cX^\circ$. It follows that we have 
\begin{equation}\label{defu12}
\eta \wedge \omega_E = 0
\end{equation}
on each fiber $X_t$.
\medskip

%\noindent Recall that we are using the notation $D' (\uu) =dt \wedge \mu$. 

Moreover, as $\mu |_{X^\circ}$ is orthogonal to $\ker \dbar$, Theorem~\ref{Hodge} shows that 
$\mu|_{X^\circ}$ is $\dbar^{\star_0}$-exact, i.e., there exists a 
$\dbar$-closed $L^2$-form $\beta_0$ on $X^\circ$ such that 
$$\dbar^{\star_0}\beta_0  =\mu |_{X^\circ}.$$
Let $\widetilde \beta_0$ be an arbitrary (globally $L^2$) extension of $ \star_0 \beta_0$. Then
$\uu-dt  \wedge \widetilde \beta_0$ is the representative we are looking for.
\end{proof}
\medskip

The result above produces a representative enjoying nice properties in restriction to the central fiber. In order to generalize that to each fiber, we consider the case where $u \in H^0 (D, \cF)$ is a {\it flat} section with respect to $h_\cF$. For that purpose, we introduce an additional cohomological assumption. 

\medskip
In the Set-up~\ref{altsetup}, assume that the coefficients $b_I$ appearing in \ref{A2} vanish. That is to say, $h_L$ has analytic singularities in the usual sense. We let $\Delta_t ^{''}$ be the Laplace operator on $L^2$-integrable $(n,1)$-forms with values in $L$ 
on $X^\circ _t$, taken with respect to $\omega_E, h_L$. Let us consider the following assumption. 
\begin{enumerate}[label={\color{violet} (A.\arabic*)}]
\setcounter{enumi}{3}
\item \label{A4} The dimension $\dim \ker \Delta_t ^{''}$ is independent of $t\in D$. 
\end{enumerate}

\medskip
Note that by Bochner formula, we already know that $\dim \ker \Delta_t ^{''}<+\infty$. Indeed, if  $\alpha$ is a $\Delta_t ^{''}$-harmonic $L^2$ $(n,1)$-form on $X^\circ_t$, then \eqref{bochner2} shows that $\alpha$ is $\Delta_t ^{'}$-harmonic and $(D')^* \alpha=0$.
In particular, $\star \alpha$ is a $L^2$-holomorphic section. Thanks to Remark~\ref{extension}, we get an injection $\ker \Delta_t ^{''}\hookrightarrow H^0(X_t, \Omega^{n-1}_{X_t}\otimes L_t)$. In particular, the former space is finite-dimensional.
%%%%%%%%%%%%%%%%%%%%%%%%%%%%%%%%%%%%%%%%%%%%%%%%%%%%%%%%%%%%%%%%%%%%%%%%%%%%%%%%%%%%%%%%%%%%%%%%%%%%%%%%%%%%%%%%%%%%%%%%%%%%%%%%%%%%%%%%%%%%%%%%%%%%%%%%%
%\begin{remark}
%Let $\cG_{n-1}$ be the sub-sheaf of $\displaystyle \Omega^{n-1}_{\cX/D}\otimes L$ whose local sections on some open subset $\Omega$ are
%\begin{equation}\alpha= \sum \alpha_j\widehat dz_j\otimes e_L, \qquad \sum_j\int_\Omega \frac{|\alpha_j|^2}{|f_j|^2}e^{-\varphi_L}d\lambda< \infty
%\end{equation}
%where the coefficients $\alpha_j$ are holomorphic, and the functions $f_i$ are defined in \eqref{a1}.

%\noindent It is clear that $\cG_{n-1}$ is coherent, and moreover the restriction $\displaystyle \cG_{n-1}|_{X_t}$
%is equal to the space of $L$-valued, holomorphic forms of type $(n-1, 0)$ which are $L^2$ with respect to $(\omega_E, h_L)$. The direct image 
%$$p_\star\left(\cG_{n-1}\right)$$
%is a vector bundle, whose fiber across a pointed disk $D^\star$ identifies with $\displaystyle \cG_{n-1}|_{X_t}$. 

%\noindent Therefore the dimension of $\ker(\Delta''_t)$ can be obtained via the rank 
%of a semi-positively defined matrix with coefficients smooth functions in $t$.
%\end{remark}

%%%%%%%%%%%%%%%%%%%%%%%%%%%%%%%%%%%%%%%%%%%%%%%%%%%%%%%%%%%%%%%%%%%%%%%%%%%%%%%%%%%%%%%%%%%%%%%%%%%%%%
\medskip

\noindent The main result of this subsection states as follows. 
\begin{thm}\label{mainresI}
In the Set-up~\ref{altsetup}, assume that $h_L$ has analytic singularities and that the condition \ref{A4} above is satisfied. 

Let $u \in H^0 (D, \cF)$ be a flat section with respect to $h_\cF$. Then, we can find a continuous $(n,0)$-form $\uu$ on $\cX \setminus E$ 
representing $u$ such that  
\begin{enumerate}[label=$(\roman*)$]
\item $\uu$ is $L^2$ and $D' \uu=0$, 

\item $\eta |_{X^\circ _t}= 0$ for any $t\in D$, and $\dbar \uu\wedge \overline{\dbar \uu}= 0$, where $\eta$ is --as usual-- given by 
$\dbar \uu= dt \wedge \eta$. Moreover, the equality
\begin{equation}\label{twoequs}
\Theta_{h_L} (L) \wedge \uu =0
\end{equation}
holds true point-wise on $\cX\setminus E$. 
\end{enumerate}
\end{thm}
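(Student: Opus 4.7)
The plan is to follow the strategy outlined in the introduction: start from the lifting-based representative $\uu_0 := V \lrcorner (dt \wedge U_0)$ of Section~\ref{flit}, correct it via a fiberwise $\dbar^\star$-equation made possible by the flatness of $u$, and extract the pointwise identities from the general curvature formula of Proposition~\ref{curvv}. The initial $\uu_0$ is perfectly suited to $L^2$ Hodge theory thanks to Lemma~\ref{meta}, which provides the required integrability of $\uu_0$, $\mu := D'\uu_0/dt$, $\eta := \dbar \uu_0/dt$ and $\dbar \mu$, both fiberwise and globally on $\cX^\circ$.

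Flatness of $u$ combined with Corollary~\ref{D1} yields $P(\mu_t) = 0$ for every $t$, i.e., $\mu_t := \mu|_{X_t^\circ}$ is orthogonal to the $L^2$-holomorphic forms on $X_t^\circ$. The Hodge decomposition of Theorem~\ref{Hodge}, together with the closed-image inequality \eqref{eq32334} of the corollary to Proposition~\ref{cor:2}, then allows us to solve fiberwise $\dbar_t^\star \beta_t = \mu_t$ for a unique $\beta_t \perp \ker \dbar_t^\star$. Assumption~\ref{A4} ensures, via Corollary~\ref{unifconst}, that the $L^2$-norms of $\beta_t$ are uniformly bounded on compact subsets of $D$. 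Following the argument of Proposition~\ref{bobprop}, we extend the fiberwise $(n-1,0)$-forms $\star_t \beta_t$ to a globally $L^2$-integrable form $\widetilde\beta$ on $\cX^\circ$ and set
\[
\uu := \uu_0 - dt \wedge \widetilde\beta.
\]
By construction the new form $\mu$ satisfies $\mu|_{X_t} = 0$ on every fiber; since $\uu$ has bi-degree $(n,0)$ on the $(n+1)$-dimensional $\cX$, the vanishing of $\mu|_{X_t}$ on each $X_t$ forces $\mu = dt \wedge \xi$ for some $(n-1,0)$-form $\xi$, whence $D'\uu = dt \wedge \mu = 0$ on $\cX \setminus E$. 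This establishes part~(i).

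To finish, we plug $\uu$ into Proposition~\ref{curvv}, whose integrability hypotheses follow by combining Lemma~\ref{meta} applied to $\uu_0$ with the $L^2$-control on $\widetilde\beta$. Flatness of $u$ gives $\ddbar\|u\|^2 = 0$ and, with $D'\uu = 0$, formula \eqref{comint} collapses to
\[
0 = -c_n\, p_\star\!\left((i\Theta_{h_L}(L))_{\rm ac} \wedge \uu \wedge \bar\uu\, e^{-\phi_L}\right) + c_n (-1)^n\, p_\star\!\left(\dbar \uu \wedge \overline{\dbar \uu}\, e^{-\phi_L}\right).
\]
Each summand is a non-negative $(1,1)$-form on $D$ (by positivity of $i\Theta_{h_L}(L)$ for the first, and by the shape $\dbar\uu = dt \wedge \eta$ for the second). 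Hence both vanish identically on $D$, and pointwise positivity yields $(i\Theta_{h_L}(L))_{\rm ac} \wedge \uu = 0$ on $\cX \setminus E$ --- where $h_L$ is smooth and the absolutely continuous part coincides with $i\Theta_{h_L}(L)$ itself --- together with $\dbar\uu \wedge \overline{\dbar\uu} = 0$, which is equivalent to $\eta|_{X_t^\circ} = 0$ for every $t$, proving (ii).

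The main obstacle is the second step, namely building the correction $\widetilde\beta$ with both the $L^2$-control and the fiberwise regularity required to make $\uu$ a continuous representative fulfilling the hypotheses of Proposition~\ref{curvv}. The uniform Poincaré-type estimate of Corollary~\ref{unifconst} --- whose proof uses the dimension hypothesis \ref{A4} in an essential way --- is precisely what makes this possible; the subsequent selection and smoothing of the family $\{\beta_t\}_{t \in D}$ into a well-behaved global form $\widetilde\beta$ is the most delicate technical ingredient of the argument.
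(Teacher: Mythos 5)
Your proposal is correct and follows essentially the same approach as the paper: lift-based representative, fiberwise $\dbar^\star$-correction enabled by flatness and Theorem~\ref{Hodge}, uniform control via \ref{A4} and Corollary~\ref{unifconst} (plus the continuity argument the paper supplies in Proposition~\ref{conlemme}), and then Proposition~\ref{curvv} together with a sign argument. One ingredient you leave unstated but which the argument genuinely needs is that the corrected fiberwise form $\eta|_{X_t^\circ}$ remains \emph{primitive} with respect to $\om_E|_{X_t^\circ}$ (because $\dbar(\star_t\beta_t)\wedge\om_E=\dbar\beta_t=0$, cf.\ \eqref{x1}); it is this primitivity, via the Hodge--Riemann relations, that gives the definite sign of $(-1)^n\dbar\uu\wedge\overline{\dbar\uu}$ --- the mere shape $\dbar\uu=dt\wedge\eta$ is not enough --- and it is equally what makes $\dbar\uu\wedge\overline{\dbar\uu}=0$ equivalent to $\eta|_{X_t^\circ}=0$. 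Likewise, checking the $L^1$-integrability hypothesis of Proposition~\ref{curvv} for the corrected $\uu$ is more than ``$L^2$-control on $\widetilde\beta$'': one needs to bound $\dbar_t(\star_t\beta_t)$ in $L^2$, which the paper obtains from the Bochner identity by relating it to $(D')^{\star_t}\beta_t$ and hence to $\dbar\mu$.
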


\begin{remark}
Let us collect a few remarks about the theorem. 
\begin{enumerate}[label=(\alph*)]

\item The content of Theorem \ref{mainresI} is clear: it "converts" the abstract data 
$\dbar u= 0$ and $\nabla u= 0$ into an effective result.

\item The identity \eqref{twoequs} is equivalent to saying that the hermitian metric induced by $i\Theta_{h_L}(L)$ on $\Lambda^nT^*_{\cX^\circ}$ has $\uu$ in its kernel.
\end{enumerate}
\end{remark}

\begin{proof}
As in the proof of  Proposition \ref{bobprop}, we start with a representative $\uu$ given by \eqref{uu}
(i.e. constructed via the contraction with the canonical lifting of $\displaystyle \frac{\partial}{\partial t}$ with respect $\om_E$). 

Since $u$ is flat on $D$, we have 
$D' \uu = dt \wedge \mu$ where $\mu |_{X^\circ _t}$ is $L^2$ and $\dbar^{\star_t}$-exact for every $t\in D$. Therefore we can solve the $\dbar^*$-equation fiberwise, namely there exists a unique $L^2$-form $\beta_t$ on $X^\circ _t$ such that $\beta_t$ is orthogonal to the $\ker \dbar^{\star_t}$ and such that
 $$\dbar^{\star_t}\beta_t = \mu |_{X^\circ _t}.$$
 By taking the $\dbar$ in both side and taking into account the fact that $\beta_t$ is orthogonal to the $\ker \dbar^{\star_t}$, we obtain
 \begin{equation}\label{lapmin}
 \Delta_t ^{''} \beta_t = \dbar( \mu|_{X_t^\circ} )\,\,\text{ on } X^\circ_t \qquad\text{and}\qquad \beta_t \text{ }\bot \text{ }\ker \Delta_t ^{''}.
  \end{equation}
 
 \noindent By analogy to the compact case it is expected that the minimal solution of a $\Delta_t ^{''}$ equation varies smoothly provided that $\dim \ker \Delta_t ^{''}$ is constant. We partly confirm this 
 expectation in Proposition~\ref{conlemme} by showing that it is continuous; for the moment, we will admit this fact
 and finish the proof of the theorem.
 
 \noindent We set 
 $$\uu_1 := \uu - dt \wedge (\star_t \beta_t ).$$
 It is a continuous, fiberwise smooth form on $\cX^\circ$ and we show now that $\uu_1$ is a representative for which the points (i)-(ii) above are satisfied. 
 
 \medskip
 
\noindent By construction, $D' \uu_1 =0$ on $\cX^\circ$.  By \eqref{e6} of Proposition~\ref{conlemme} below, 
 the $L^2$-norm of $\beta_t$ is smaller than the $L^2$-norm of $\dbar \mu |_{X^\circ _t}$, i.e., 
 $$\|\beta_t\|_{L^2} \leq C \| \dbar\mu |_{X^\circ _t}\|_{L^2}$$
 for some constant $C$ independent of $t$. 
Moreover, we recall the estimates in Lemma \ref{meta}: $\dbar\mu |_{X^\circ _t}$ is uniformly $L^2$-bounded. Therefore $dt \wedge (\star_t \beta_t )$ is $L^2$ and so our representative $\uu_1$ is $L^2$. 
 
 \smallskip
 
 \noindent We have $\dbar \uu_1= dt\wedge \left(\eta+ \dbar \big(\star_t\beta_t\big)\right)$
 and since $\dbar \big(\star_t\beta_t\big)\wedge \om_E= \dbar \beta_t= 0$, it follows that 
  \begin{equation}\label{x1}
  \frac{\dbar \uu_1}{dt}\Big|_{X_t}\wedge \om_E= 0.
 \end{equation} 

In order to use Proposition \ref{curvv}, we show next that we have $\dbar \uu_1\wedge \ol{\dbar \uu_1}\in L^1$. To this end, we write 
 \begin{align}
 \dbar \uu_1 \wedge \overline{\dbar \uu_1} = & \label{l1} dt \wedge \eta  \wedge \overline{dt \wedge \eta}  + dt \wedge \dbar  (\star_t \beta_t )  \wedge\overline{dt \wedge \eta} \\ 
+ & d\label{l2} t \wedge \eta \wedge \overline{dt \wedge \dbar  (\star_t \beta_t )}
 + dt \wedge \dbar  (\star_t \beta_t ) \wedge \overline{dt \wedge \dbar  (\star_t \beta_t )}.\\
\nonumber
\end{align}
 By the estimates in Lemma  \ref{meta}, $\eta$ is $L^2$. Then the first term of RHS of \eqref{l1} is $L^1$. Degree considerations show that we have
 \begin{equation}\label{degconn}
 dt \wedge \dbar  (\star_t \beta_t )  \wedge\overline{dt \wedge \eta} = dt \wedge \dbar _t (\star_t \beta_t )  \wedge\overline{dt \wedge \eta} ,
  \end{equation}
 where $\dbar _t$ is the $\dbar$-operator on $X_t$. Since $\Delta_t ^{''} \beta_t = \dbar \mu $ and $\beta_t$ is of degree $(n,1)$, Bochner formula shows that the $L^2$-norm of $\dbar _t (\star_t \beta_t)$ is equal to the $L^2$ norm of the form $(D')^{\star_t} \beta_t $ (this is due to the fact that $\beta_t$ is $\dbar$-closed), which in turn is bounded by the $L^2$-norm of $\dbar \mu$. Once again,  
the estimates provided by Lemma \ref{meta} show that the $L^2$-norm of $\dbar \mu  |_{X_t} $ is bounded uniformly with respect to $t$. It follows that $dt \wedge \dbar _t (\star_t \beta_t ) $ is $L^2$.
 
 Therefore $dt \wedge \dbar  (\star_t \beta_t )  \wedge\overline{dt \wedge \eta}$ is $L^1$-bounded by using \eqref{degconn}. The same type of arguments show that the two terms in \eqref{l2} are also $L^1$.
\medskip

\noindent We apply Proposition \ref{curvv} for the representative $\uu_1$ of $u$. The flatness of $u$ imply that  
$$
c_n\int_{\cX^\circ} ((-1)^n\dbar \uu_1\wedge \overline{\dbar \uu_1} + i\Theta_{h_L} (L) \wedge \uu_1\wedge \overline{\uu_1}) e^{-\phi_L}  =0.$$
Thanks to the assumption $ i\Theta_{h_L} (L) \geq 0$ combined with \eqref{x1}, both two terms in the integral are semi-positive, and modulo the continuity of the family $(\beta_t)$ our result is proved.
\end{proof}	
\medskip

\noindent The proof of the following result is very similar to the familiar situation in which the 
couple of metrics $(\om_E, h_L)$ are non-singular. We provide a complete argument because we were unable to find a reference.

\begin{proposition}\label{conlemme}
The minimal solution $\beta_t$ in \eqref{lapmin} varies continuously with respect to $t$.
\end{proposition}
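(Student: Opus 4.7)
The plan is to prove strong $L^2$-convergence of the transported $\beta_t$'s on a fixed fiber, via a decomposition with respect to $\ker \Delta''_{t_0}$ combined with the Poincaré-type inequality from Corollary~\ref{unifconst}. The bulk of the work lies in handling the different background metrics and $\dbar^\star$-operators at different points of $D$.

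Fix $t_0\in D$. By \ref{A1}, since $E$ has snc support and each fiber $X_t$ is transverse to $E$, there exists a smooth local trivialization $\Phi: p^{-1}(U)\setminus E \xrightarrow{\sim} X_{t_0}^\circ \times U$ over a neighborhood $U$ of $t_0$, compatible with each component $E_i$. Pulling back via $\Phi_t:=\Phi|_{X_t^\circ}$, I transfer the Poincaré metric $\om_E|_{X_t^\circ}$, the weight $\phi_L|_{X_t^\circ}$, the form $\mu|_{X_t^\circ}$ and the solution $\beta_t$ to the fixed manifold $X_{t_0}^\circ$, denoting the pullbacks with tildes. From \eqref{coeff poin poin} and \ref{A2}, the metrics $\tilde\om_t$ are uniformly quasi-isometric to $\tilde\om_{t_0}$ on $U$ (shrunk if necessary), with smooth local convergence of their coefficients as $t\to t_0$; analogously, $\tilde\phi_t\to \tilde\phi_{t_0}$ in $\cC^\infty_{\rm loc}(X_{t_0}^\circ)$. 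The formula \eqref{b5} combined with Lemma~\ref{meta} yields $\tilde\mu_t\to\tilde\mu_{t_0}$ in $L^2(X_{t_0}^\circ)$. Finally, by the argument at the start of the proof of Corollary~\ref{unifconst} (crucially using \ref{A4}), one can arrange an orthonormal basis $(e_i^t)_{i=1,\dots,N}$ of $\ker \Delta''_t$ such that $\Phi_t^\ast e_i^t \to e_i^{t_0}$ in $L^2(X_{t_0}^\circ)$ as $t\to t_0$.

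Decompose $\tilde\beta_t = h_t + \tilde\beta_t^\perp$ orthogonally with respect to $\ker\Delta''_{t_0}$, where $h_t = \sum_i \langle \tilde\beta_t, e_i^{t_0}\rangle_{t_0}\, e_i^{t_0}$ and $\tilde\beta_t^\perp \perp \ker\Delta''_{t_0}$. A uniform bound $\|\tilde\beta_t\|\leq C$ follows from Corollary~\ref{unifconst} applied to $\star_t\tilde\beta_t$ (which is an $(n-1,0)$-form orthogonal to $H^{(n-1)}_t$), combined with Bochner's formula on $\tilde\beta_t$ giving $\int\langle A\tilde\beta_t,\tilde\beta_t\rangle \leq \|\tilde\mu_t\|^2$, and the uniform $L^2$-bound on $\tilde\mu_t$ from Lemma~\ref{meta}. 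This uniform bound, together with the orthogonality $\tilde\beta_t \perp \Phi_t^\ast e_i^t$ and the $L^2$-convergence $\Phi_t^\ast e_i^t\to e_i^{t_0}$, implies $h_t\to 0$ in $L^2$ as $t\to t_0$.

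It remains to prove $\tilde\beta_t^\perp\to\tilde\beta_{t_0}$ in $L^2$. The form $\tilde\beta_t^\perp$ is $\dbar$-closed (since $h_t$ is harmonic), and $\dbar^{\star_{t_0}}\tilde\beta_t^\perp = \dbar^{\star_{t_0}}\tilde\beta_t$ differs from $\dbar^{\star_t}\tilde\beta_t = \tilde\mu_t$ by a term linear in $(\tilde\om_t - \tilde\om_{t_0})$ applied to $\tilde\beta_t$, which tends to $0$ in $L^2$ thanks to the uniform bound on $\|\tilde\beta_t\|$ and the smooth convergence of the metric coefficients. Hence $\dbar^{\star_{t_0}}\tilde\beta_t^\perp\to \tilde\mu_{t_0} = \dbar^{\star_{t_0}}\tilde\beta_{t_0}$ in $L^2$. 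Since $\tilde\beta_t^\perp - \tilde\beta_{t_0} \in \ker\dbar \cap (\ker\Delta''_{t_0})^\perp$, the Poincaré inequality on $X_{t_0}^\circ$ (a consequence of the closed-range Theorem~\ref{Hodge}) gives
\[
\|\tilde\beta_t^\perp-\tilde\beta_{t_0}\|^2_{t_0} \leq C\,\|\dbar^{\star_{t_0}}(\tilde\beta_t^\perp-\tilde\beta_{t_0})\|^2_{t_0} \xrightarrow[t\to t_0]{}\, 0,
\]
which combined with $h_t\to 0$ yields the desired continuity $\tilde\beta_t\to\tilde\beta_{t_0}$ in $L^2$. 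The principal obstacle in the whole argument is precisely the continuous dependence of the finite-dimensional harmonic kernel $\ker\Delta''_t$ on $t$, for which the constancy hypothesis \ref{A4} and the compactness argument of Corollary~\ref{unifconst} are essential.
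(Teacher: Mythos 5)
Your strategy is genuinely different from the paper's, and unfortunately it has a gap that the paper's more roundabout route was designed to avoid.

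\medskip

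\noindent\textbf{What you do differently.} You transfer everything to the fixed fiber $X_{t_0}^\circ$ via a smooth trivialization $\Phi$, decompose $\tilde\beta_t$ with respect to $\ker\Delta''_{t_0}$, and estimate the two pieces via the uniform Poincar\'e inequality and the continuity of the harmonic kernel. The paper never pulls back to a fixed fiber. Instead it shows (Step 2) invertibility of $\lambda-\Delta''_t$ for small $|\lambda|$, proves (Step 3) continuity of the resolvent $(\lambda-\Delta''_t)^{-1}$ applied to a continuous input family by first cutting off to a compact set away from $E$ and extending by a partition of unity on the total space $\cX\setminus E$, recovers the projector $\Pr_t=\int_\Gamma(\lambda-\Delta''_t)^{-1}d\lambda$ via the Cauchy integral (Remark~\ref{project}), and only then deduces continuity of $\beta_t$ by comparing with a continuous auxiliary family $(s_t)$. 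The whole machinery exists precisely so that all differential operators are applied fiberwise, intrinsically on each $X_t$.

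\medskip

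\noindent\textbf{The gap.} The trivialization $\Phi$ is a diffeomorphism but not a biholomorphism on the fibers, because the complex structure of $X_t$ varies with $t$. Consequently: (a) the pushed-forward form $\tilde\beta_t$ is not a pure $(n,1)$-form on $X_{t_0}^\circ$ with respect to its own complex structure, only with respect to the transported one; and (b) the conjugated operator $\Phi_{t*}\circ\dbar_t\circ\Phi_t^*$ differs from $\dbar_{t_0}$ by a \emph{first-order} differential operator with coefficients built from the Kodaira--Spencer variation, not just by a zero-order term. Your sentence \og$\dbar^{\star_{t_0}}\tilde\beta_t$ differs from $\dbar^{\star_t}\tilde\beta_t=\tilde\mu_t$ by a term linear in $(\tilde\om_t-\tilde\om_{t_0})$ applied to $\tilde\beta_t$\fg\ accounts only for the change of metric and inner product and silently drops the first-order piece coming from the change of $\dbar$. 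To control that piece you would need a uniform $W^{1,2}$ bound on $\tilde\beta_t$, not merely the $L^2$ bound you establish; the same issue affects the claim that $\tilde\beta_t^\perp$ is $\dbar_{t_0}$-closed, on which the final Poincar\'e inequality rests. None of this is addressed in your argument. This is exactly the pitfall the paper circumvents by cutting $v_0$ off to a compact set $K\Subset X_0\setminus E$, extending smoothly as an $(n,1)$-form on the total space, and then \emph{restricting} (not pulling back) to $X_t$; the families $t\mapsto v_\ep|_{X_t}$ and $t\mapsto\Delta''_t(v_\ep|_{X_t})$ are then continuous for trivial reasons, and the uniform estimate of Step~1 does the rest.

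\medskip

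\noindent A secondary remark: your use of the argument at the start of Corollary~\ref{unifconst} to obtain a converging family of orthonormal bases $e_i^t$ of $\ker\Delta''_t$ as $t\to t_0$ is slightly stronger than what that argument literally proves (it yields subsequential convergence after extraction). This can be patched with a small compactness argument using \ref{A4}, but it should be said. The paper avoids even this by expressing the projector as a Cauchy integral of the resolvent, whose continuity it establishes directly.
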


\begin{proof} We have divided our proof in a few steps. 
	
	\subsubsection{Step 1} Let $(u_t)$ be a family of $L$-valued, $L^2$ forms of $(n,1)$--type on the fibers of $p$, such that we have 
	\begin{equation}\label{e5}
	\Delta_t''v_t= u_t
	\end{equation}
	on the fiber $X_t$. If moreover we assume that each $v_t$ is perpendicular to $\ker\Delta_t''$, then we claim that 
	\begin{equation}\label{e6}
	\int_{X_t}|v_t|^2_{\om_E}e^{-\phi_L}dV_{\om_E}\leq C \int_{X_t}|u_t|^2_{\om_E}e^{-\phi_L}dV_{\om_E}
	\end{equation}
	for some constant $C$ uniform with respect to $t$.
	
	\noindent Indeed, our claim follows instantly from Corollary \ref{unifconst} and \eqref{hp} applied to $u:= \star_t v_t$.
	
	\medskip
	
	\subsubsection{Step 2} Let $\lambda\in \mathbb{C}$ such that $0< |\lambda|\ll 1$ --we will make this precise in a moment.   We claim that the operator $$A_{\lambda, t}:=\lambda -\Delta^{''} _t$$ is invertible, which we show by proving that the equation $A_{\lambda,t} v = u$ admits a solution $v$, as soon as $u$ is in $L^2$. This can be seen via the usual Riesz representation theorem, as follows. 
	
	\noindent
	 We define on $L^2_{n,1}$ the functional
	\begin{equation*}
	I(\phi)= \int_{X_t}\langle u, \phi \rangle e^{-\phi_L}dV_{\om_E}
	\end{equation*}
	We write $u= u_1+ u_2$ and $\phi= \phi_1+ \phi_2$ according to the decomposition $L^2_{n,1}= \ker \Delta_t''\oplus (\ker \Delta_t'')^\perp$. Then we have 
	\begin{equation*} 
	I(\phi)= \int_{X_t}\langle u_1, \phi_1 \rangle e^{-\phi_L}dV_{\om_E}+ \int_{X_t}\langle u_2, \phi_2 \rangle e^{-\phi_L}dV_{\om_E}
	\end{equation*}
	and then the squared absolute value of the second integral is smaller than
	\begin{equation}\label{d3}
	\int_{X_t}|\Delta''_t\phi |^2e^{-\phi_L}dV_{\om_E}
	\end{equation}
	up to a uniform constant, by Step 1. Therefore, we get
	\begin{equation}\label{d4}
	|I(\phi)|^2\lesssim \int_{X_t}|\phi_1|^2e^{-\phi_L}dV_{\om_E}+ \int_{X_t}|\Delta''_t\phi |^2e^{-\phi_L}dV_{\om_E}.
	\end{equation}
	Since \[\Big|\int_{X_t} \langle\phi, \Delta_t''\phi \rangle dV_{\om_E} \Big|^2\le  \int_{X_t}|\phi_2|^2e^{-\phi_L}dV_{\om_E}\cdot  \int_{X_t}|\Delta''_t\phi|^2e^{-\phi_L}dV_{\om_E}\]
	we see that from \eqref{d4} that
	\begin{equation}
	\label{d5}
	|I(\phi)|^2\le C_{u,\lambda} \int_{X_t}|\lambda\phi- \Delta''_t\phi |^2e^{-\phi_L}dV_{\om_E}
	\end{equation}
	as we see from the previous step, provided that $\displaystyle |\lambda|\leq \frac{1}{2C},$
	where $C$ is the constant in \eqref{e6}. Moreover, $C_{u,\lambda}$ is of the form $ C_{\lambda} \cdot \|u\|_{L^2}^2$. 
	\smallskip
	
	\noindent 
	
 Taking $\phi=u$ in the identity above, we see that $A_{\lambda,t} :=\lambda-\Delta_t''$ is injective. Moreover, the functional 
	\begin{eqnarray*}
	J: \Imm A_{\lambda,t} & \longrightarrow &\mathbb C \\
	A_{\lambda,t}\phi & \mapsto& I(\phi)
	\end{eqnarray*}
	 is well-defined and continuous by \eqref{d5}. In particular, it extends to $ F=\overline{\Imm A_{\lambda,t}}$ and Riesz theorem provides us with an element $v\in F$ satisfying 
	 \begin{equation}
	 \label{riesz}
	 \forall \psi\in F, \,\,J(\psi)=\int_{X_t}\langle v,\psi\rangle e^{-\phi_L}dV_{\om_E} \qquad  \mbox{and} \qquad  \|v\|_{L^2} \le C_{\lambda} \|u\|^2_{L^2}.
	 \end{equation}
	  The equality $J( A_{\lambda,t} \phi)=I(\phi)$ for any $\phi$ in $L^2$ shows that $A_{\lambda,t} v=u$. This concludes this step.

%	
%	Moreover, Riesz theorem provides us with the estimate
%	\begin{equation}\label{d6}
%	\int_{X_t}|v|^2_{\om_E}e^{-\phi_L}dV_{\om_E}\leq C \int_{X_t}|u|^2_{\om_E}e^{-\phi_L}dV_{\om_E}
%	\end{equation}
%	for a constant $C$ independent of $t$. 
%	
%	\medskip
%	
%	\noindent For each $t\in D$ the operator $\lambda- \Delta_t''$ is injective for every $\lambda\neq 0$ and whose absolute value is small enough. This follows immediately from the estimate \eqref{d6}.       
		
	\medskip 
	
	\subsubsection{Step 3} Let $\lambda\in \mathbb{C}$ as in the previous step,
	and let $u_t$ be a continuous $L^2$-family. We show that $v_t :=(\lambda -\Delta^{''}_t)^{-1} u_t$ is continuous with respect to $t$ (with respect to the $L^2$-norm). 
	
	\noindent It would be sufficient to check the continuity at one point $0\in D$. 
	For any $\ep >0$ we define the form
	$v_{0,\ep}:= \mu_\ep v_0$ with compact support in $X_0\setminus E$. We then have
	$$\|v_{0,\ep} -v_0\|_{L^2} \leq \ep ,\qquad \| (\lambda -\Delta^{''}_0) v_{0,\ep} - (\lambda -\Delta^{''}_0) v_0 \|_{L^2} \leq \ep$$
by the properties of $(\mu_\ep)_{\ep> 0}$.	

We next construct a smooth extension $v_\ep$ of $v_{0, \ep}$ as follows. Let $(\Omega_i)_{i\in I}$
be a finite covering of $p^{-1}(\frac 12 D)$ by coordinate charts, and let $(\theta_i)_{i\in I}$ be a partition of unity subordinate to this covering. The $L$-valued form
$$v_\ep:=  \mu_\ep\sum_i\theta_i(z, t)v_{0,i}$$
extends $v_{0, \ep}$ and it is compactly supported in $\cX \setminus E$. Here we denote by $v_{0,i}$ is the local expression of $\displaystyle v_{0,\ep}|_{X_0\cap \Omega_i}$, extended trivially to $\Omega_i$ (note that this is still $L^2$).

	Since the metrics $\omega_E, h_L$ are smooth in $\cX \setminus E$, 
	$$u_{\ep, t} := (\lambda -\Delta^{''}_t) (v_\ep |_{X_t})$$ 
	is a smooth $L^2$-family. 
	
	\noindent By the second part in \eqref{riesz} we have 
	$$ \| v_t -  v_\ep |_{X_t}\|_{L^2} \leq C \| u_t -  u_{\ep, t}\|_{L^2} . $$
	As $u_t$ and $u_{\ep, t}$ are continuous with respect to $t$, we infer that we have $\| v_t -  v_\ep |_{X_t}\|_{L^2}  \leq C_\ep\mathcal O(|t|) + C \| u_0 -  u_{\ep, 0}\|_{L^2} $.
	It follows that we have 
	$$\| v_t -  v_\ep |_{X_t}\|_{L^2} \leq o(1) + C \ep $$
as $|t|\to 0$. The -small- quantity $o(1)$ here depends on $\ep$, but since
by construction the family $v_\ep |_t$ is continuous with respect to $t$
and its continuity modulus is independent of $\ep$ we infer that $v_t$ is continuous at $0$.
	
	\smallskip
	
	\noindent Now we define the operator $\Pr_t :=\int_{\lambda \in \Gamma} (\lambda  -\Delta^{''} _t)^{-1} d\lambda$ where $\Gamma$ is a small circle centered at $0$. We have proved above 
	that $\Pr_t$ is continuous with respect to $t$. Moreover, $\Pr_t$ coincides with the orthogonal projection onto $\ker \Delta^{''} _t$: we postpone the proof of this claim for the moment, see the Remark \ref{project} below.
	
\medskip 
	
\subsubsection{Step 4} This is the main step in the proof of the proposition. Let $\beta_t$ be the $\dbar^*$-solution on $X_t$ in question. Then
	$$\Delta^{''} _t \beta_t= \dbar (\mu|_{X_t^\circ}) .$$
	By the estimates in Proposition \ref{bobprop}, the RHS is $L^2$ and continuous with respect to $t$.
	Let $s_t$ be a continuous $L^2$-family (continuous with respect to $t$) such that $s_0 = \beta_0$.
	Then we have
	$$\lambda_0 s_t - \Delta'' _t \beta_t =\lambda_0 s_t - \dbar (\mu |_{X_t^\circ}) $$
	for every $t$, where $0< |\lambda_0|\ll 1$ is fixed, small enough as in Step 2.
	By Step 3, we can find a continuous family $\gamma_t$ such that
	$$\lambda_0 s_t - \Delta^{''} _t \beta_t  = \lambda_0 \gamma_t - \Delta^{''} _t \gamma_t $$
	for every $t$.
	Then $\Delta^{''} _t (\beta_t - \gamma_t^\perp) =\lambda_0 (s_t -\gamma_t )$, 
	where $\gamma_t^\perp:= \gamma_t- \Pr_t\gamma_t$ is the projection onto $(\ker \Delta''_t)^{\perp}$.
	
	\noindent Now $\beta_t$ is orthogonal to $\ker \Delta^{''} _t $ by construction.  Then 
	$\beta_t - \gamma_t^\perp$ is  orthogonal to $\ker \Delta^{''} _t $. By Step 1 we thus have 
	$$\|\beta_t - \gamma_t^\perp\|_{L^2} \leq C  \| s_t -\gamma_t \|_{L^2} .$$
	Note that $s_0 = \gamma_0$ and $s_t$ and $\gamma_t$ are continuous, then 
	$\|\beta_t - \gamma_t^\perp \|_{L^2}  = o (1)$. By Step 4, $\gamma_t^\perp$ is continuous, therefore $\beta_t$ is continuous at $0$. 
	
	Summing up, the continuity with respect to the $L^2$ norm of 
	$(\beta_t)_{t\in D}$ is established.	
\medskip 
	
\subsubsection{Step 5} We show here that the form
\begin{equation}
dt\wedge \star_t\beta_t
\end{equation}
induced by the family $(\beta_t)_{t\in D}$ in \eqref{lapmin} is continuous on $\cX\setminus E$. This 
is a consequence of the fact that the family of operators $(\Delta''_t)_{t\in D}$ is smooth and it has a smooth variation when restricted to a compact subset $K\subset \cX\setminus E$, combined with the continuity property established in Proposition~\ref{conlemme}.
\smallskip

\noindent Let $\Omega\Subset X_0\setminus E$ be a small coordinate chart. We can interpret the $(\Delta''_t)_{t\in D}$ as family of operators on the forms defined on $\Omega$, since $p$ is locally trivial. Then we have 
\begin{equation}
\Delta_0''\beta_t= \dbar \mu_t+ (\Delta_0''- \Delta_t'')(\beta_t)
\end{equation}
from which it follows that 
\begin{equation}\label{E2}
\Delta_0''(\beta_t- \beta_0)= \dbar \mu_t- \dbar \mu_0+ (\Delta_0''- \Delta_t'')(\beta_t).
\end{equation} 
The equality \eqref{E2} combined with the usual a-priori estimates for the elliptic operators imply that
\begin{equation}\label{E3}
\Vert \beta_t- \beta_0\Vert^2_{W^2}\leq C\big(\Vert \dbar \mu_t- \dbar \mu_0\Vert^2_{L^2}+  \Vert \beta_t- \beta_0\Vert^2_{L^2}\big)+ 
\delta_t \Vert \beta_t\Vert^2_{W^2}
\end{equation}
where $\delta_t\to 0$ as $t\to 0$. We infer that 
\begin{equation}\label{E4}
\Vert \beta_t- \beta_0\Vert^2_{W^2}\leq \delta_t
\end{equation}
for some (other)  function $\delta_t$ tending to zero.

\noindent The usual boot-strapping method implies that $\displaystyle \lim_{t\to 0}\beta_t= \beta_0$  smoothly on any compact subset in  $\cX \setminus E$.
In global terms this translates as 
$$dt\wedge \star_t\beta_t$$
is a continuous $(n, 0)$-form on $\cX \setminus E$, so our lemma is proved.
\end{proof}
\medskip

\begin{remark}\label{project}
	{\rm  For the sake of completeness, we provide the details for the fact that the linear operator $\Pr_t :=\int_{\lambda \in \Gamma} (\lambda  -\Delta^{''} _t)^{-1} d\lambda$ is the orthogonal projection onto $\ker\Delta_t''$. Let $\mathcal H_t$ be the Hilbert space $(\ker\Delta_t'')^\perp$. We need to prove two points:
		
\begin{enumerate}[label=$(\roman*)$]
\item   $\Pr_t u=0$ for any $u\in \mathcal H_t$;
		
\item  $\Pr_t u=u$ for any $u\in \ker \Delta'' _t$.
\end{enumerate}	
		For the first point,  we have the following equality
		\begin{equation}\label{d7}
		(\lambda  -\Delta''_t)^{-1}= -\sum_{k\geq 0}\lambda^k{\cG}_t^{k+1}   \qquad\text{on } \mathcal H_t
		\end{equation}
		where $\cG_t$ is the inverse of the operator $\Delta_t''$ restricted to $\mathcal H_t$. The sum in \eqref{d7} is indeed convergent (for the operator norm), given the estimates \eqref{e6} and the fact that $\lambda$ belongs to the circle $\Gamma$ of small enough radius. 
		
		\noindent It follows that we can exchange integration/sum and then we have 
		\begin{equation}\label{d8}
		\int_{\lambda\in \Gamma}(\lambda  -\Delta''_t)^{-1}ud\lambda= -\sum_{k\geq 0}{\cG}_t^{k+1}(u)\int_{\lambda\in \Gamma}\lambda^kd\lambda   
		\end{equation}
		and this shows that $
		\Pr_t(u)= 0 $
		for any $u\in \mathcal H_t$. 
		
		\medskip
		
		For the second point, let $u\in \ker \Delta'' _t$. For  any $\alpha\in \ker \Delta_t''$, we have $$\langle \frac{u}{\lambda} , \alpha\rangle =\langle \frac{(\lambda -\Delta''_t) 	(\lambda  -\Delta''_t)^{-1} u}{\lambda} , \alpha\rangle = \langle (\lambda  -\Delta''_t)^{-1} u, \alpha\rangle  .$$ 
		Therefore
		$$\langle  u , \alpha\rangle = \int_{\lambda \in \Gamma}  \langle \frac{u}{\lambda} , \alpha\rangle d\lambda = \int_{\lambda \in \Gamma} \langle  (\lambda  -\Delta''_t)^{-1} u, \alpha\rangle  d\lambda.$$
		Then $u - \Pr_t u $ is orthogonal to $\ker \Delta^{''}$. 
		On the other hand, thanks to the equality 
		$$\Delta'' _t \circ 	(\lambda  -\Delta''_t)^{-1} (u)= 	(\lambda  -\Delta''_t)^{-1} \circ \Delta'' _t u =0 ,$$  
		we know that 
		$\Pr_t u \in \ker \Delta'' _t$.  Therefore $\Pr_t u  =u$.
		
}\end{remark}

\begin{remark}{\rm Actually the form $\beta_t$ can be obtained as usually via an integral formula,
\begin{equation}\label{i1}
\beta_t= -\int_{\lambda\in \Gamma}\frac{1}{\lambda}(\lambda  -\Delta''_t)^{-1} (\dbar \mu|_{X_t^\circ})
\end{equation}
which gives the hope that its variation with respect to $t$ is actually
smooth. This can probably be obtained along the
same lines as in \cite[Thm 7.5]{Kod} modulo the fact that in the present situation, we have to deal with the additional difficulty induced by the fact that we are working with singular metrics
$\om_E$ and $h_L$. 
}\end{remark}
\medskip

\noindent We can now end this section by providing a proof of Theorem~\ref{thmA}. 

\begin{proof}[Proof of Theorem~\ref{thmA}]
Up to shrinking $D$ to a punctured disk $D_1\subset D$, one may assume that the assumptions \ref{A1}-\ref{A3} are satisfied (with $b_I=0$ for each $I$), cf. Section~\ref{sec1}. 

Next, there exists another punctured disk $D_2\subset D_1$ such that the coherent sheaf $R^1p_*(K_{\cX/D}\otimes L \otimes \mathcal I(h_L))$ is locally free and commutes with base change; i.e. its fiber at $t\in D_2$ is given by $H^1(X_t, K_{X_t}\otimes L|_{X_t} \otimes  \mathcal I(h_L|_{X_t}))$ and the dimension of the latter is independent of $t\in D_2$. Thanks to Proposition~\ref{cohomology}, the dimension of the space of harmonic $(n,1)$-forms, i.e. $\dim \ker(\Delta''_t)$, is independent of $t\in D_2$. In other words, the condition~\ref{A4} is satisfied over $D_2$. 

Theorem~\ref{thmA} is now a direct consequence of Theorem~\ref{mainresI}. 
\end{proof}

\section{A lower bound for the curvature in case of a -relatively- big twist}
\label{section 5}
 Let $p:\cX\to D$ be a smooth, projective family, and let $L\to \cX$ be a line bundle endowed with a metric $h_L=e^{-\phi_L}$ satisfying the following requirements.

\begin{enumerate}[label={\color{violet}(B.\arabic*)}]

\item \label{B1} There exist a smooth, semi-positive real (1,1)-form $\omega_L$ as well as an effective $\mathbb R$-divisor $E_0$ 
on $\cX$ such that 
$$i\Theta_{h_L}(L)= \omega_L+ [E_0]$$
where we denote by $[E_0]$ the current of integration associated to the $\mathbb{R}$-divisor $E_0$.

\item \label{B2} $ \omega_L$ is relatively Kähler, i.e., $\om_L|_{X_t} >0$ for every $t$.

\item \label{B3} The support of the divisor  $E:=\mathrm{Supp}(E_0)$ is snc, and transverse to the fibers of $p$.
\end{enumerate}
\smallskip

\noindent 
Let \[\displaystyle c(\phi_L):= \frac{\omega_L^{n+1}}{\omega_L ^n\wedge idt\wedge d\ol t}\] be the so-called geodesic curvature associated to $\omega_L$.
%Note that the above limit is decreasing (check) and it coïncide with $\frac{\omega_L ^{n+1}}{\omega_L ^n\wedge idt\wedge d\ol t}$ on the locus where $\omega_L ^n\wedge idt\wedge d\ol t$ is non-vanishing.

%Since $\omega_L\ge 0$ is smooth on $\cX$, we have up to shrinking $D$ that \[c(\phi_L)\in  L^{\infty}(\cX).\] 

Our goal here is to establish the following result.

\begin{thm}\label{strikt}
Under the assumptions \ref{B1}-\ref{B3} above, let $(\cF, h_\cF)$ be the direct image bundle $p_\star\big((K_{\cX/D}+ L)\otimes \mathcal I(h_L)\big)$ 
endowed with the $L^2$ metric. Then for every $u\in H^0(D, \cF)$ and every $t\in D$, the following inequality holds
\begin{equation}\label{c1}
\langle\Theta_{h_\cF}(\cF)u, u\rangle_t\geq c_n\int_{X_t}c(\phi_L)u\wedge \ol u e^{-\varphi_L}
\end{equation}
where we identify $\Theta_{h_\cF}(\cF)$ with an endomorphism of $\cF$ 
by "dividing" with $idt\wedge d\ol t$.
\end{thm}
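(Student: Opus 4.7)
The plan is to deduce the inequality from the general curvature formula in Proposition~\ref{curvv}, applied to a well-chosen representative of $u$ coming from the horizontal lift with respect to an approximation of $\om_L$, and then to pass to the limit. Concretely, I introduce the family
\[
\om_\ep := \om_L + \ep\, \om_E \qquad \text{on } \cX\setminus E,
\]
where $\om_E$ is the Poincaré-type metric from Section~\ref{sec2}. By \ref{B2}, each $\om_\ep$ is a complete Kähler metric on $\cX\setminus E$ which is relatively strictly positive and has Poincaré singularities along $E$. Let $V_\ep$ be the horizontal lift of $\d/\d t$ with respect to $\om_\ep$ and set $\uu_\ep := V_\ep \lrcorner (dt\wedge U_0)$, for a fixed smooth global representative $U_0$ of $u$. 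Then the analogue of Remark~\ref{prim} applied to $\om_\ep$ gives the key \emph{fiberwise $\om_\ep$-primitivity}
\[
\uu_\ep \wedge \om_\ep \big|_{X_t^\circ} \equiv 0,
\]
which is the Berndtsson-type orthogonality that drives the lower bound.

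Second, I would adapt the estimates of Lemma~\ref{meta} to verify, uniformly in $\ep>0$, that $\uu_\ep$ satisfies the integrability hypotheses $(i)$--$(ii)$ of Proposition~\ref{curvv}. Since the $\om_L$-part of $\om_\ep$ is smooth and relatively Kähler while the Poincaré part is the one already controlled in Section~\ref{sec2}, the proof of Lemma~\ref{meta} goes through. Applying Proposition~\ref{curvv} and using $(\Theta_{h_L}(L))_{\rm ac} = \om_L$ from \ref{B1}, one gets
\begin{align*}
\partial\bar\partial\|u\|^2 \;=\; c_n\Big[\;&{-}\,p_\star\!\big(\om_L\wedge \uu_\ep\wedge \ol\uu_\ep\, e^{-\phi_L}\big)
+ (-1)^n p_\star\!\big(D'\uu_\ep\wedge \overline{D'\uu_\ep}\, e^{-\phi_L}\big)\\
&+ (-1)^n p_\star\!\big(\dbar\uu_\ep \wedge \overline{\dbar\uu_\ep}\, e^{-\phi_L}\big)\;\Big].
\end{align*}
The left-hand side is independent of $\ep$; on the right, the third term is non-negative, so the core task is to extract the geodesic-curvature lower bound from the first two.

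Third \-- and this is the algebraic heart of the argument, following Berndtsson \cite{Bo11} \-- I would use the fiberwise $\om_\ep$-primitivity of $\uu_\ep$ together with the Kodaira-Nakano commutator identity for primitive $(n,0)$-forms to establish the pointwise inequality
\[
(-1)^n D'\uu_\ep\wedge \overline{D'\uu_\ep}\, e^{-\phi_L} \;-\; \om_L\wedge \uu_\ep \wedge \ol\uu_\ep\, e^{-\phi_L} \;\ge\; c(\om_\ep)\cdot \uu_\ep \wedge \ol\uu_\ep\, e^{-\phi_L}\wedge i dt\wedge d\bar t
\]
on $X_t^\circ$. Translating via the Chern relation $\partial\bar\partial\|u\|^2 = \|\nabla u\|^2 - \langle \Theta_{h_\cF}(\cF)u, u\rangle$ (read with the usual $i dt\wedge d\bar t$ normalization) and dropping the non-negative $\dbar\uu_\ep$ term yields
\[
\langle\Theta_{h_\cF}(\cF)u,u\rangle_t \;\ge\; c_n\int_{X_t^\circ} c(\om_\ep)\, u\wedge \ol u\, e^{-\varphi_L}.
\]

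Finally, I would let $\ep\to 0$. By the monotonicity observation following Definition~\ref{geod curv}, $c(\om_\ep)$ decreases pointwise to $c(\om_L) = c(\phi_L)$ on $\cX\setminus E$; monotone convergence then gives the desired inequality, using that $u\wedge \ol u\, e^{-\varphi_L}$ has finite mass on $X_t$ (since $u\in \cF_t$ means its coefficient is in $\mathcal I(h_L|_{X_t})$) and that $X_t\setminus X_t^\circ$ has Lebesgue measure zero. The main obstacles I anticipate are: (a) making the uniform-in-$\ep$ estimates of $\uu_\ep, D'\uu_\ep, \dbar\uu_\ep$ rigorous, which requires revisiting the local computations of \S\ref{amis} with $\om_\ep$ in place of $\om_E$; (b) establishing cleanly the Berndtsson pointwise inequality in the third step, as this is where the geometry of $\om_\ep$-primitive $(n,0)$-forms with values in the singular bundle $(L,h_L)$ must be exploited carefully near $E$.
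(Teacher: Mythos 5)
Your approximation $\om_\ep := \om_L + \ep\om_E$ differs from the paper's, which instead modifies the \emph{weight} of $h_L$, setting $\phi_\ep := \phi_L - \ep\sum\log\log(1/|s_i|^2)$ and $\om_\ep := (dd^c\phi_\ep)_{\rm ac}$. The crucial consequence is that in the paper's version the background Kähler metric \emph{equals} the absolutely continuous curvature of the approximating metric $h_\ep$, so that the first term of Proposition~\ref{curvv} literally produces $c(\phi_\ep)\,\uu_\ep\wedge\ol\uu_\ep\, e^{-\phi_\ep}$ via the primitivity identity of \cite[Lem~4.2]{Bo11} with no error term. With your choice, the curvature term is $\om_L = \om_\ep - \ep\om_E$ while $\uu_\ep$ is $\om_\ep$-primitive, so you pick up an extra piece $\ep\,c_n\,p_\star(\om_E\wedge\uu_\ep\wedge\ol\uu_\ep\, e^{-\phi_L})$ that is nonnegative but enters the expansion of $\langle\Theta_{h_\cF}(\cF)u,u\rangle$ with the wrong sign; it needs a separate argument to be absorbed, which you do not provide.

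The more serious issue is your Step 3. The pointwise inequality
\[
(-1)^n D'\uu_\ep\wedge\overline{D'\uu_\ep}\, e^{-\phi_L} - \om_L\wedge\uu_\ep\wedge\ol\uu_\ep\, e^{-\phi_L} \ge c(\om_\ep)\,\uu_\ep\wedge\ol\uu_\ep\, e^{-\phi_L}\wedge idt\wedge d\bar t
\]
is false and does not appear in \cite{Bo11}. What Berndtsson proves (and what the paper uses) is the \emph{equality} $\om_\ep\wedge\uu_\ep\wedge\ol\uu_\ep = c(\om_\ep)\,\uu_\ep\wedge\ol\uu_\ep\wedge p^\star(dt\wedge d\bar t)$ coming purely from fiberwise $\om_\ep$-primitivity; substituting it into your claim forces a manifestly false inequality (take $D'\uu_\ep$ small and $c(\om_\ep)>0$). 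The term $(-1)^n D'\uu_\ep\wedge\overline{D'\uu_\ep}$ contributes $-\int_{X_t}|\mu_\ep|^2$ to the curvature, which is the \emph{bad} term, and no pointwise Bochner trick controls it. What saves the day in the paper is the chain: the decomposition $\mu_\ep = P(\mu_\ep) + \mu_\ep^\perp$ from Corollary~\ref{D1}, the identity $\dbar\mu_\ep = D'\eta_\ep$ of \cite[Lem~4.4]{Bo11}, the observation that $\mu_\ep^\perp$ is the minimal $L^2$ solution of the $\dbar^\star$-equation, and the global a~priori estimate $\|\mu_\ep^\perp\|_{L^2}\le\|\eta_\ep\|_{L^2}$ from \cite[Thm~1.6]{JCMP}; this is where both the positivity of $(L,h_\ep)$ and the Poincaré singularities of $\om_\ep$ are genuinely used. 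Your proposal omits this entire chain, and without it the two negative terms (the error term above and $-\int|\mu_\ep^\perp|^2$) cannot be dominated, so the proof does not close.
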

\medskip

\noindent Prior to providing the arguments for Theorem \ref{strikt} we propose here the following problem. 

\begin{question}
We assume that $\mathcal Y$ is a foliation on a K\"ahler manifold $Z$. In which cases the bundle $K_{\mathcal Y}+ L$ admits a 
positively curved metric? That is to say, is there some analogue of the Bergman metric on twisted relative canonical bundles in the more general contexts of a foliation? If yes, can we equally obtain
a lower bound of the curvature form?
\end{question}

\begin{proof}[Proof of Theorem~\ref{strikt}] 

The idea of our proof is to construct an approximation of the metric $h_L$ so that the resulting absolutely continuous part of the associated curvature form has Poincar\'e singularities along 
the support of $E$. Then we can use the curvature formulas we have obtained in the previous sections, and finally conclude by a limit argument. 

\subsubsection{Approximation of the metric} Let $\ep> 0$ be a (small) positive real number. We introduce the form
\begin{equation}\label{c2}
\omega_{\ep}:= \omega_L - \ep \sum_{i\in I}dd^c\log \log \frac 1{|s_i|^2}
\end{equation}
where $I$ is the set of irreducible components of $E$, and $s_i$ cut outs exactly one of these for a given $i\in I$, following notation in Section~\ref{sec1}. We note that $\omega_{\ep}$ is positive and has Poincaré singularities along $E$ as soon as the metrics $h_i$ used to measure the norm of $s_i$ are suitably scaled, which is what we assume from now on. 

Next, we introduce the following weight on $L$

\begin{equation}
\label{phi ep}
\phi_\ep:=\phi_L- \ep \sum_{i\in I} \log \log \frac1{|s_i|^2}.
\end{equation}
Clearly, $\phi_\ep$ has generalized analytic singularities in the sense of \ref{A2} in Set-up~\ref{altsetup} and it satisfies $dd^c \phi_\ep=\om_\ep$.

\noindent The properties of $(\phi_\ep)_{\ep}$ are collected in the following statement.

\begin{lemme}\label{app} Let $(\Omega, (z_1,\dots, z_n, t=z_{n+1})$ be a coordinate system on $\cX$ adapted to the pair $(\cX, E)$ as in Set-up~\ref{altsetup}. Then the following hold.
\begin{enumerate}[label=(\roman*)]
\item The geodesic curvature $c(\phi_\ep)$ is uniformly bounded from above. 
\item We have $\displaystyle \lim_{\ep\to 0}c(\phi_\ep)= c(\phi_L)$ point-wise on $\cX\setminus E$.
\item For every $\ep$ small enough, the multiplier ideal sheaf of $h_\ep:=e^{-\phi_\ep}$ coincides with 
$\mathcal I(h_L)$. Moreover, the induced $L^2$ metric, say $H_\ep$ on the direct image is smooth, and it converges to $h_\cF$ as $\ep\to 0$.  
\end{enumerate}
\end{lemme}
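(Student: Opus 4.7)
The plan is to work in local coordinates $(z_1,\dots,z_n,t)$ adapted to the SNC divisor $E$ as in Set-up~\ref{altsetup}, where $E\cap \Omega = \{z_1\cdots z_p=0\}$ and each defining section $s_i$ equals $z_i$ up to a nowhere-vanishing smooth factor. Setting $\rho_i := -\log|s_i|_{h_i}^2$, the standard expansion
\[
-\ddc \log \log \tfrac{1}{|s_i|^2} \;=\; \frac{d\rho_i \wedge d^c \rho_i}{\rho_i^{2}} \;-\; \frac{\Theta_{h_i}}{\rho_i}
\]
lets me check that its singular part is generated by $dz_i/z_i$ and $d\bar z_i/\bar z_i$ modulo bounded smooth terms, and hence contributes nothing to the pure $dt\wedge d\bar t$ and mixed $dt \wedge d\bar z_\alpha$ components of $\om_\ep$ beyond an $O(\ep)$-bounded smooth piece. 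Consequently the coefficients will satisfy
\[
(\om_\ep)_{t\bar t} = (\om_L)_{t\bar t} + O(\ep), \qquad (\om_\ep)_{t\bar\alpha} = (\om_L)_{t\bar\alpha} + O(\ep),
\]
uniformly on $\cX\setminus E$, while the fiberwise restriction enjoys $g_\ep^{f} \ge g_L^{f}$ as positive Hermitian matrices (provided the metrics $h_i$ are suitably rescaled so that $|s_i|^2<e^{-1}$).

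From there I would prove (i) and (ii) simultaneously via the Schur-complement identity
\[
c(\om_\ep) \;=\; (\om_\ep)_{t\bar t} \;-\; (\om_\ep)_{t\bar\alpha}\,(g_\ep^{f})^{\bar\alpha\beta}\,(\om_\ep)_{\beta\bar t},
\]
which is just a reformulation of $c(\om_\ep) = 1/\|dt\|^2_{\om_\ep}$ mentioned after Definition~\ref{geod curv}. Since $(g_\ep^{f})^{-1}$ is positive semidefinite, the subtracted term is nonnegative and one obtains
\[
0 \le c(\om_\ep) \le (\om_\ep)_{t\bar t} \le C,
\]
with $C$ independent of $\ep$, which proves (i). For (ii), on any compact subset of $\cX\setminus E$ the forms $\om_\ep$ converge smoothly to $\om_L$ as $\ep\to 0$, and since \ref{B2} guarantees $g_L^{f}>0$ on fibers, each Schur-complement ingredient passes to the limit and yields $c(\om_\ep)\to c(\om_L)$ pointwise on $\cX\setminus E$.

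For (iii), the weight $\phi_\ep$ fits the generalized analytic singularity form \ref{A2} (with $b_I=\ep$ on singletons $I=\{i\}$, $k_i=1$, and the smooth functions $\phi_I$ arising from the potentials of the $h_i$'s), so Lemma~\ref{prelemme} applies directly to $h_\ep$ and provides the smoothness of $H_\ep$ on $D$. The inclusion $\mathcal I(h_\ep)\subset \mathcal I(h_L)$ is immediate since $\phi_\ep\le \phi_L$. For the converse inclusion, I would note that the analytic structure $\phi_L \sim \sum_i a_i\log|z_i|^2$ reduces membership in $\mathcal I(h_L)$, after a log-resolution, to explicit discrete vanishing conditions along the irreducible components of $E$, and these conditions are preserved when the integrand $|f|^2 e^{-\phi_L}$ is multiplied by the slowly growing factor $\prod_i (-\log|z_i|^2)^\ep$. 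This can be made rigorous either by invoking strong openness of multiplier ideals or by a direct Hölder argument exploiting the fact that $|f|^2 e^{-\phi_L}\in L^{1+\delta}_{\mathrm{loc}}$ for some $\delta>0$ controlled by the coefficients $a_i$. Finally, since $\phi_\ep$ increases monotonically to $\phi_L$ as $\ep\searrow 0$, we have $e^{-\phi_\ep}\searrow e^{-\phi_L}$ and monotone convergence yields $H_\ep\searrow h_\cF$ on each section. The hardest step will precisely be the inclusion $\mathcal I(h_L)\subset \mathcal I(h_\ep)$ for small $\ep$, as it requires quantitative control of the analytic singularities rather than a formal comparison of weights.
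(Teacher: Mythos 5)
Your proof is essentially the same as the paper's (which is extremely terse and simply refers the reader to the coefficient estimates~\eqref{coeff poin poin} and the transversality in \ref{B3}), but you make the argument explicit via the Schur complement identity $c(\om_\ep)=(\om_\ep)_{t\bar t}-(\om_\ep)_{t\bar\alpha}(g_\ep^{f})^{\bar\alpha\beta}(\om_\ep)_{\beta\bar t}$, and that is a clean way to organize it. The logic of (i) and (ii) goes through: the subtracted Schur term is nonnegative, so $c(\om_\ep)\le (\om_\ep)_{t\bar t}$, and the latter is uniformly bounded; for (ii) you only need pointwise smooth convergence of the coefficients on $\cX\setminus E$ together with $\om_L|_{X_t}>0$ from \ref{B2}, which is exactly what the paper invokes. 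For (iii) the paper again just says the two inclusions are ``clear'' resp.\ ``easy''; your more explicit route via strong openness or a direct $L^{1+\delta}$/H\"older argument is legitimate, and the monotone convergence $H_\ep\searrow h_\cF$ is correct since $\phi_\ep\nearrow \phi_L$.

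One intermediate claim you make is not correct as stated, though it fortunately does not affect your conclusion: the assertion that $(\om_\ep)_{t\bar\alpha}=(\om_L)_{t\bar\alpha}+O(\ep)$ \emph{uniformly} on $\cX\setminus E$ fails. From the expansion~\eqref{coeff poin poin} (or by differentiating $\rho_\alpha=-\log|z_\alpha|^2+\psi_\alpha$ with $\psi_\alpha$ a smooth function of all variables, including $t$), the Poincar\'e correction contributes a term of size $\frac{\ep}{|z_\alpha|\log^2|z_\alpha|^2}$ to the $dt\wedge d\bar z_\alpha$ coefficient, which is $O(\ep)$ at each fixed point but unbounded as $z_\alpha\to 0$. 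Only the $dt\wedge d\bar t$ coefficient has the claimed uniform bound, since there the Poincar\'e correction is merely $O\!\big(\tfrac{\ep}{|\log|z_\alpha|^2|}\big)$. Since your estimate for (i) only uses $c(\om_\ep)\le (\om_\ep)_{t\bar t}\le C$, and (ii) only requires pointwise convergence away from $E$, the argument survives intact; you should just not claim uniform boundedness of the mixed coefficients.
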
 

\begin{proof}
For the point $(i)$, we write $c(\om_\ep)=1/\|dt\|^2_{\om_\ep}$ and the result follows from the transversality conditions \ref{B3} and e.g. the estimates for the coefficients provided in \eqref{coeff poin poin}. 

The point $(ii)$ follows easily from the local smooth convergence $\phi_\ep \to \phi_L$ on $\cX\setminus E$ combined with the positivity requirement \ref{B2}. 

As for the third point $(iii)$, the smoothness of $H_\ep$ and its convergence to $h_\cF$ is a consequence of the transversality assumption \ref{B3} by the same arguments as for Lemma~\ref{prelemme}. As for the statement about multiplier ideal sheaves, one has clearly $\mathcal I(\phi_\ep)\subset \mathcal I(\phi_L)$ while the reverse inclusion is an easy consequence of \ref{B1}-\ref{B3}. \end{proof}

%\begin{remark}
%\label{upper bound}
%Since $c(\phi_\ep)\le g_{t\bar t}^{\ep}$, we get from $(i)$ a uniform upper bound for $c(\phi_\ep)$. 
%\end{remark}

\subsubsection{Application of the curvature formula} 
We consider $u$ a local holomorphic section of the bundle $\cF$, and let $\uu_\ep$ be the representative of $u$ constructed in \eqref{uu}, by using the contraction with the vector field $V_\ep$ associated to 
the metric $\omega_\ep$. 

\noindent Let 
\begin{equation}\label{c3}
\dbar \uu_\ep= dt\wedge \eta_\ep, \qquad D'\uu_\ep= dt\wedge \mu_\ep
\end{equation} 
where $D'= D'_\ep$ is the Chern connection corresponding to $(L, h_\ep)$.
Moreover we have 
\[  \omega_\ep \wedge \uu_\ep \wedge  \ol \uu_\ep =c(\phi_\ep) \uu_\ep \wedge \ol \uu_\ep \wedge p^\star (dt \wedge d\overline{t}) \quad \mbox{on } \,\, \cX \setminus E\] 
by \cite[Lem 4.2]{Bo11}.  Proposition \ref{curvv} then gives
\begin{equation}\label{c4}
-\frac{\partial ^2}{\partial t\partial \ol t}(\Vert u\Vert_{H_\ep}^2)= c_n\int_{X_t}c(\phi_\ep)\uu_\ep \wedge \ol \uu_\ep e^{-\varphi_\ep}+ \int_{X_t}|\eta_\ep|^2e^{-\phi_\ep}dV_{\omega_\ep}- \int_{X_t} |\mu_\ep|^2e^{-\phi_\ep}dV_{\omega_\ep},
\end{equation}
since $\eta_\ep$ is primitive on fibers of $p$. We discuss next the terms which occur in \eqref{c4}.
\smallskip

\noindent $\bullet$ The LHS of \eqref{c4} is equal to  
\begin{equation}\label{c5}
\langle\Theta_{H_\ep}(\cF)u, u\rangle- \Vert P(\mu_\ep)\Vert^2
\end{equation}
by the usual formula of the Hessian of the norm of a holomorphic section of a vector bundle.
Then \eqref{c4} becomes 
\begin{equation}\label{c6}
\langle\Theta_{H_\ep}(\cF)u, u\rangle_t= c_n\int_{X_t}c(\phi_\ep)\uu\wedge \ol \uu e^{-\varphi_\ep}+ \int_{X_t}|\eta_\ep|^2_{\omega_\ep}e^{-\phi_\ep}dV_{\omega_\ep}- \int_{X_t} |\mu_\ep^\perp|^2_{\omega_\ep}e^{-\phi_\ep}dV_{\omega_\ep},
\end{equation}
where $\mu_\ep= P(\mu_\ep)+ \mu_\ep^\perp$ is the $L^2$ decomposition of $\mu_\ep$ according to the $\ker \dbar$ and its orthogonal.  
\smallskip

\noindent $\bullet$ As observed in \cite[Lem 4.4]{Bo11}, we have 
\begin{equation}\label{c7}
\dbar \mu_\ep= D'\eta_\ep
\end{equation}
--even if the curvature is not zero!--, and actually $\mu_\ep^\perp$ is the solution of \eqref{c7}
\emph{whose $L^2$ norm is minimal}. By \cite[Thm~1.6]{JCMP}, we have the precise estimate
\begin{equation}\label{c8}
\int_{X_t} |\mu_\ep^\perp|^2_{\omega_\ep}e^{-\phi_\ep}dV_{\omega_\ep}\leq \int_{X_t}|\eta_\ep|^2_{\omega_\ep}e^{-\phi_\ep}dV_{\omega_\ep}
\end{equation}
and then we get 
\begin{equation}\label{c9}
\langle\Theta_{H_\ep}(\cF)u, u\rangle_t\geq  c_n\int_{X_t}c(\phi_\ep)u\wedge \ol u e^{-\phi_\ep}.\end{equation}
as consequence of \eqref{c6}.
\smallskip

\noindent $\bullet$ The last step in our proof is to notice that as the parameter $\ep$ approaches zero, the inequality \eqref{c9} implies
\begin{equation}\label{c10}
\langle\Theta_{h_\cF}(\cF)u, u\rangle_t\geq  c_n\int_{X_t}c(\phi_L)u\wedge \ol u e^{-\phi_L}.\end{equation}
Indeed, we are using Lemma \ref{app} for the LHS of \eqref{c9} and Lemma \ref{app} (i) combined with 
dominated convergence theorem for the RHS. Theorem \ref{strikt} is proved.
\end{proof}

In the last lines, we now explain how to deduce Theorem~\ref{thmB} from Theorem~\ref{strikt} above. 

\begin{proof}[Proof of Theorem~\ref{thmB}]
\label{page thmB}

We start by making the observation that if $\pi:\cX'\to \cX$ is a proper birational morphism inducing birational morphisms $X_t'\to X_t$, then one has $\int_{X_t'}c(\phi_L') u'\wedge \bar u' e^{-\phi_L'}= \int_{X_t}c(\phi_L) u\wedge \bar ue^{-\phi_L}$, with the self-explanatory notation. 

Therefore, by blowing up $\cX$ and restricting the family to a punctured disk $D_1\subset D$, one can from now on assume that the conditions \ref{B1} and \ref{B3} are satisfied. 

\medskip

Now, one has to show that one can further assume that condition \ref{B2} is satisfied. This is a bit more involved and can be shown as follows. 

Since $\int_{X_t} \om_L^n>0$ and $\om_L$ is smooth, it follows from e.g. \cite{Bou02} that $[\om_L]$ is $p$-big. In particular, there exists a punctured disk $D_2\subset D_1$, an effective, horizontal $\mathbb R$-divisor $F$ and an ample $\mathbb R$-line bundle $A$ on $\cX$ such that 
\begin{equation}
\label{omL}
[\om_L]=A+F \quad \mbox{in } H^{1,1}(\cX, \mathbb R).
\end{equation} 
After blowing-up once again and restricting to a smaller punctured disk $D_3\subset D_2$, one can assume without loss of generality that $E+F$ is snc and transverse to the fiber. Of course, the pull-back of $A$ is not ample anymore, but there exists an effective divisor $G$ contained in the exceptional locus of the blow-up such that $A-G$ is ample. All in all, one will assume from now on that one has a decomposition \eqref{omL} where $A$ is ample and $E+F$ is snc and transverse to the fibers. 

We pick a strictly psh smooth weight $\phi_A$ on $A$ and set $\phi_E$ (resp. $\phi_F$) for the singular psh weight on the corresponding $\mathbb R$-divisor. 
%Finally, we set $\phi_L^\circ:=\phi_L-\phi_E$; it is smooth and satisfies $dd^c \phi_L^\circ=\om_L$. 

For $\delta>0$, we introduce the psh weight $\phi_\delta$ on $L$ defined by 
\[\phi_\delta= (1-\delta)\phi_L+\delta(\phi_A+\phi_F+\phi_E).\]
Clearly, $\phi_\delta$ has analytic singularities along the divisor $E+F$ and $(dd^c\phi_\delta)_{\rm ac}$ is a relative Kähler metric for any $\delta>0$. That is, the metric $h_{L,\delta}:=e^{-\phi_\delta}$ satisfies \ref{B2}. 

Thanks to Theorem~\ref{strikt}, the proof of Theorem~\ref{thmB} will be complete once we show the following

\begin{claim}
\label{claim}
With the notation above, one has 
\begin{enumerate}[label=(\roman*)]
\item $\mathcal I(\phi_\delta)=\mathcal I(\phi_L)$ for $\delta$ small enough. 
\item The $L^2$ metric $H_\delta$ induced by $h_{L,\delta}$ on $\cF$ is smooth and converges smoothly to $h_\cF$ when $\delta\to 0$. 
\item For any $t\in D_3$ and $u\in \cF_t$, one has 
\[\lim_{\delta \to 0} \int_{X_t}c(\phi_\delta)u\wedge \bar ue^{-\phi_\delta}=\int_{X_t}c(\phi_L)u\wedge \bar ue^{-\phi_L}.\]
\end{enumerate}
\end{claim}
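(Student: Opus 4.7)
\emph{Plan.} All three items being local on $\cX$, I would work in an adapted coordinate chart $(\Omega; z_1,\ldots,z_n, t=z_{n+1})$ in which $(E+F)\cap\Omega = (z_1\cdots z_p=0)$. In such coordinates,
\[
\phi_L \equiv \sum_{i=1}^p a_i \log|z_i|^2,\qquad \phi_A+\phi_E+\phi_F \equiv \sum_{i=1}^p e_i\log|z_i|^2
\]
modulo smooth terms, with $a_i,e_i\ge 0$; hence $\phi_\delta \equiv \sum_i \lambda_i(\delta)\log|z_i|^2$ where $\lambda_i(\delta):=(1-\delta)a_i+\delta e_i$ depends affinely on $\delta$. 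A preliminary generic modification $\phi_L\rightsquigarrow \phi_L+\eta\phi_E$ for small $\eta>0$, which preserves both $\mathcal I(\phi_L)$ and the validity of \ref{B1}-\ref{B3} and can be undone at the end by letting $\eta\to 0$, arranges that every coefficient $a_i$ is non-integer.

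For (i), $\mathcal I(\phi_L)$ and $\mathcal I(\phi_\delta)$ are locally generated by monomials $\prod z_i^{k_i}$ satisfying the strict inequalities $k_i>a_i-1$ and $k_i>\lambda_i(\delta)-1$ respectively. Non-integrality of the $a_i$ ensures that for $\delta$ small enough, $\lambda_i(\delta)-1$ lies in the same half-open integer interval as $a_i-1$, so the two admissible sets of integer vectors coincide; a finite covering of $p^{-1}(K)$ for $K\Subset D_3$ yields a uniform threshold $\delta_0$. Granted (i), $\cF_\delta := p_\star((K_{\cX/D}+L)\otimes \mathcal I(\phi_\delta))$ coincides with $\cF$, and smoothness of $H_\delta$ in $t$ is proved verbatim as in Lemma~\ref{prelemme}, the $t,\bar t$-derivatives of $\phi_\delta$ being uniformly bounded by the transversality \ref{B3}. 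For the smooth convergence $H_\delta\to h_\cF$, my plan is to use the uniform pointwise bound
\[
|a(z,t)|^2 e^{-\phi_\delta}\le C\,|a(z,t)|^2 \prod_{i=1}^p |z_i|^{-2\mu_i},\qquad \mu_i:=\max(a_i,\lambda_i(\delta_0)),
\]
whose RHS is fiberwise integrable because by item~(i) applied at $\delta=\delta_0$ the strict condition $k_i>\mu_i-1$ is equivalent to $k_i>a_i-1$; Vitali's convergence theorem then concludes, and the same scheme applied to $t,\bar t$-derivatives yields $C^\infty$ convergence.

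For (iii), the same arguments as in Lemma~\ref{app}(i)-(ii) show that $c(\phi_\delta)$ is uniformly bounded above on $X_t$ and converges pointwise to $c(\phi_L)$ on $X_t\cap\cX^\circ$; the uniform positive horizontal contribution of $\delta\om_A$ inside $(dd^c\phi_\delta)_{\rm ac}=(1-\delta)\om_L+\delta\om_A$ is what controls $c(\phi_\delta)$ from above. Combined with the uniform majorant from (ii), dominated convergence on $X_t^\circ$ yields the desired limit. The principal technical difficulty lies in (i): equality of multiplier ideals can genuinely fail at components of $E$ where $a_i$ is a positive integer, because a small convex perturbation of the coefficient then crosses an integer jump of the generators; the preliminary replacement $\phi_L\rightsquigarrow \phi_L+\eta\phi_E$ is the key manoeuvre sidestepping this issue, and everything else is essentially a careful reuse of Lemmas~\ref{prelemme} and~\ref{app}.
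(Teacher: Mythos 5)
For (ii) and (iii), your approach is essentially the paper's (the paper states (ii) is proved "along the same lines as Lemma~\ref{prelemme}'' and invokes dominated convergence for (iii)), except that your justification of the upper bound on $c(\phi_\delta)$ in (iii) is inverted: what bounds $c(\phi_\delta)=1/\|dt\|^2_{\om_\delta}$ from above is the uniform \emph{upper} bound $\om_\delta:=(dd^c\phi_\delta)_{\mathrm{ac}}\le \om_L+dd^c\phi_A$ (a larger metric makes $\|dt\|^2$ smaller, hence $c$ larger), not the ``positive horizontal contribution of $\delta\om_A$'' -- positivity of $\delta\om_A$ pushes $c$ up, it does not control it. For (i) the paper's route is simply to note that $\phi_L-\phi_E$ is smooth, so $\mathcal I(\phi_L)=\mathcal I(\phi_E)$ and $\mathcal I(\phi_\delta)=\mathcal I(\phi_E+\delta\phi_F)$, and then conclude from the snc structure of $E+F$; your local coefficient analysis is a valid alternative.

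However, you have misdiagnosed the difficulty in (i), and this creates a genuine gap. You write $\lambda_i(\delta)=(1-\delta)a_i+\delta e_i$ with $e_i$ the local coefficient of $\phi_A+\phi_E+\phi_F$. Since $\phi_A$ is smooth and $\phi_E$ has exactly the same singular coefficients $a_i$ as $\phi_L$ along $E_0$, one gets $e_i=a_i+c_i$ with $c_i\ge 0$ the $F$-multiplicity, hence $\lambda_i(\delta)=a_i+\delta c_i\ge a_i$: the perturbation only increases coefficients, and the coefficient on every component of $E_0$ not in $F$ is actually unchanged. A small strictly positive increase of a coefficient never changes the open condition $k_i>a_i-1$ on integers $k_i$, regardless of whether $a_i$ is an integer (the threshold $\lceil a_i\rceil$, resp. $a_i$ for integer $a_i$, exceeds $a_i-1$ by a strictly positive amount). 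So the ``principal technical difficulty'' you describe -- crossing an integer jump at components where $a_i\in\mathbb Z_{>0}$ -- simply does not arise. The preliminary modification $\phi_L\rightsquigarrow\phi_L+\eta\phi_E$ is therefore unnecessary, and as written it introduces a real gap: it changes both $\phi_L$ and $\phi_\delta$, and you assert it ``can be undone at the end by letting $\eta\to 0$'' without any argument. Justifying this would require either exactly the ``upward perturbations are safe'' observation that makes the trick superfluous, or a careful interchange of the two limits $\eta\to 0$ and $\delta\to 0$ throughout items (i)--(iii), neither of which is addressed.
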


\begin{proof}[Proof of Claim~\ref{claim}]
Since $\phi_L-\phi_E$ it is smooth (its curvature is nothing but $\om_L$), we have $\mathcal I(\phi_L)=\mathcal I(\phi_E)$ and $\mathcal I(\phi_\delta)= \mathcal I(\phi_E+\delta \phi_F)$, which coincides with $\mathcal I(\phi_E)$ when $\delta$ is small enough. This shows $(i)$. 

The item $(ii)$ can be proved along the same lines as Lemma~\ref{prelemme}, using the fact that $E+F$ is snc and transverse to the fibers.

As for item $(iii)$, we have pointwise convergence $c(\phi_\delta)\to c(\phi_L)$ on a Zariski open set of each $X_t$, $t\in D_3$, cf. Definition~\ref{geod curv}. Moreover, the Kähler metric $(dd^c \phi_\delta)_{\rm ac}$ on $\cX$ is uniformly bounded above by a fixed Kähler metric on $\cX$ (for instance, $\om_L+dd^c \phi_A$). In particular $c(\phi_\delta)$ is uniformly bounded above (say over compact subsets of $D_3$) and one can apply Lebesgue dominated convergence theorem to conclude. 
\end{proof}

The proof of Theorem~\ref{thmB} is now complete.
\end{proof}

\begin{remark} 
\label{rem Dstar}
The following limit argument shows that we can take $D^\star\subset D$ to be the set of $t\in D$ such that the following hold:
\smallskip
  
\noindent $\bullet$ the metric $h_\cF$ is smooth locally near $t$;

\noindent $\bullet$ the fiber $\cF_t$ coincides with $\displaystyle H^0\left(X_t, (K_{X_t}+ L)\otimes \mathcal I(h_L|_{X_t})\right)$.  
\medskip

\noindent Let $0\in D$ be a point which satisfies these requirements. Let $U\Subset \cX \setminus (h_L= \infty)$ be any open subset of $\cX$
whose closure does not meet the singular locus of the metric $h_L$.
Then we have 
\begin{equation}\label{cont}
\int_{U \cap X_0}c (\phi_L) u\wedge \overline{u} e^{-\phi_L} = \lim_{t\to 0}
\int_{U \cap X_{t}} c (\phi_L) u\wedge \overline{u} e^{-\phi_L}
\end{equation}
since all the objects involved are non-singular.

The next observation is that since $h_\cF$ is smooth near $0$ -by assumption-,
the function $t\mapsto \langle\Theta_{h_\cF}(\cF)u, u\rangle_t$ is thus continuous at 0. 
Theorem \ref{thmB} combined with \eqref{cont} and the positivity of $c(\phi)$ shows that we have 
$$\int_{U \cap X_0}c (\phi_L) u\wedge \overline{u} e^{-\phi_L} \leq \langle\Theta_{h_\cF}(\cF)u, u\rangle (0).$$
It follows that the estimate in \eqref{c1} of Theorem \ref{thmB} extends across $0\in D$ as well.
\end{remark}

\bibliographystyle{smfalpha}
\bibliography{biblioBob}

\end{document}